\def\D{\bm{D}}
\def\X{\bm{X}}
\def\Pb{\bm{P}}
\def\p{\bm{p}}
\def\real{\mathbb{R}}
\def\rank{\mathrm{rank}}
\def \alphab {\bm{\alpha}}
\def \betab {\bm{\beta}}
\def \v{\bm{v}}
\def \alphab {\bm{\alpha}}
\def \betab {\bm{\beta}}
\def \e{\bm{e}}
\def\mf{\mathcal{M}}
\def\mfr{\mathcal{M}_r}
\def\diag{\mathrm{diag}}
\def\A{\bm{A}}
\def\B{\bm{B}}
\def\E{\bm{E}}
\def\Ka{\mathcal{K}_\alpha}
\def\G{\bm{G}}
\def\H{\bm{H}}
\def\I{\bm{I}}
\def\J{\bm{J}}
\def\cO{\mathcal{O}}
\def\R{\bm{R}}
\def\M{\bm{M}}
\def\U{\bm{U}}
\def\W{\bm{W}}
\def\X{\bm{X}}
\def\Y{\bm{Y}}
\def\Z{\bm{Z}}
\def\P{\bm{P}}
\def\a{\bm{a}}
\def\ai{\bm{\alpha}\in\mathbb{I}}
\def\b{\bm{b}}
\def\u{\bm{u}}
\def\x{\bm{x}}
\def\y{\bm{y}}
\def\T{\mathbb{T}}
\def\Ta{\mathcal{T}_\alpha}
\def\univ{\mathbb{I}}
\def\oot{\bm{1}\bm{1}^\top}
\def\one{\bm{1}}
\def\Rnn{\mathbb{R}^{n\times n}}
\def\Ro{\mathcal{R}_{\Omega}}
\def\Rol{\mathcal{R}_{\Omega_{l+1}}}
\def\Po{\mathcal{P}_{\Omega}}
\def\Fo{\mathcal{F}_\Omega}
\def\fro{\mathrm{F}}
\def\Pt{\mathcal{P}_{\mathbb{T}}}
\def\Ptl{\mathcal{P}_{\mathbb{T}_l}}
\def\Pthl{\mathcal{P}_{\hat{\mathbb{T}}_l}}
\def\Pu{\mathcal{P}_U}
\def\Pup{\mathcal{P}_{U^\perp}}
\def\Pul{\mathcal{P}_{U_l}}
\def\Puhl{\mathcal{P}_{\hat{U}_l}}
\def\wa{\bm{w}_{\bm{\alpha}}}
\def\wb{\bm{w}_{\bm{\beta}}}
\def\va{\bm{v}_{\bm{\alpha}}}
\def \alphab {\bm{\alpha}}
\def \betab {\bm{\beta}}
\newcommand{\bb}{\mathbb}
\renewcommand{\le}{\leqslant}
\DeclareMathOperator{\Tr}{Tr}
\renewcommand{\epsilon}{\varepsilon}
\newcommand{\la}{\langle}
\newcommand{\ra}{\rangle}
\newtheorem{thm}{Theorem}[section]
\newtheorem{lem}[thm]{Lemma}
\newtheorem{asp}[thm]{Assumption}
\newtheorem{remark}{Remark}
\DeclareMathOperator*{\minimize}{\mathrm{minimize}}
\DeclareMathOperator*{\subjectto}{\mathrm{subject~to}}
\begin{document}
\title{Riemannian Optimization for Non-convex Euclidean Distance Geometry with Global Recovery Guarantees}

\author[1]{Chandler Smith\thanks{Corresponding Author, Chandler.Smith@Tufts.edu}}
\author[2]{HanQin Cai}
\author[1]{Abiy Tasissa}
\affil[1]{Department of Mathematics, Tufts University, Medford, MA 02155, USA.}
\affil[2]{Department of Statistics and Data Science and Department of Computer Science, University of Central Florida, Orlando, FL 32816, USA.}

\maketitle
\begin{abstract}
 The problem of determining the configuration of points from partial distance information, known as the Euclidean Distance Geometry (EDG) problem, is fundamental to many tasks in the applied sciences. In this paper, we propose two algorithms grounded in the Riemannian optimization framework to address the EDG problem. Our approach formulates the problem as a low-rank matrix completion task over the Gram matrix, using partial measurements represented as expansion coefficients of the Gram matrix in a non-orthogonal basis. For the first algorithm, under a uniform sampling with replacement model for the observed distance entries, we demonstrate that, with high probability, a Riemannian gradient-like algorithm on the manifold of rank-$r$ matrices converges linearly to the true solution, given initialization via a one-step hard thresholding. This holds provided the number of samples, $m$, satisfies $m \geq \cO(n^{7/4}r^2 \log(n))$. With a more refined initialization, achieved through resampled Riemannian gradient-like descent, we further improve this bound to $m \geq \cO(nr^2 \log(n))$. Our analysis for the first algorithm leverages a non-self-adjoint operator and depends on deriving eigenvalue bounds for an inner product matrix of  restricted basis matrices, leveraging sparsity properties for tighter guarantees than previously established. The second algorithm introduces a self-adjoint surrogate for the sampling operator. This algorithm demonstrates strong numerical performance on both synthetic and real data. Furthermore, we show that optimizing over manifolds of higher-than-rank-$r$ matrices yields superior numerical results, consistent with recent literature on overparameterization in the EDG problem.
\end{abstract}


\section{Introduction} \label{sec: Introduction}

The rapid advancement of technology across various scientific fields has greatly simplified data collection. In many practical applications, however, there are limitations to measurements that can lead to incomplete data.  This can be caused by geographic, climatic, or other factors that determine whether a measurement between two points can be obtained, and as such some data may be missing\cite{aldibaja2016improving,marti2013multi}. For instance, in protein structure prediction, nuclear magnetic resonance (NMR) spectroscopy experiments yield spectra for protons that are close together, resulting in incomplete known distance information\cite{clore1993exploring}. Similarly, in sensor networks, we may have mobile nodes with known distances only from fixed anchors \cite{boukerche2007localization,kuriakose2014review}. In these and other scenarios, the fundamental problem is determining the configuration of points based on partial information about inter-point distances. This problem is known as the Euclidean distance geometry (EDG) problem, which has numerous applications throughout the applied sciences \cite{biswas2006semidefinite,ding2010sensor,rojas2012distance,porta2018distance,tenenbaum2000global,glunt1993molecular,trosset1997applications,fang2013using,liberti2008branch,einav2023quantitatively}.  

To formulate this problem mathematically, some notation is in order. Let $\{\p_i\}_{i=1}^n\subset\mathbb{R}^r$ denote a set of $n$ points in $\real^{r}$. We define the $r\times n$ matrix $\Pb =[\p_1,\p_2,...,\p_n]$, which has the points as columns. There are two essential mathematical objects related to $\P$. The first object is the Gram matrix $\X \in \mathbb{R}^{n \times n}$, defined as $\X = \Pb^\top \Pb$. By construction, $\X$ is symmetric and positive semi-definite.
The second object is the squared distance matrix $\D \in \mathbb{R}^{n \times n}$, defined entry-wise as $D_{ij} = \Vert \p_i - \p_j \Vert^2_2$. The reason for working with the squared distance matrix instead of the distance matrix will become clear later. Computing $\D$ given $\P$ is conceptually straightforward. However, the inverse problem of determining $\Pb$ from $\D$ is not immediately straightforward. To address this problem, we need to precisely define what it means to identify $\P$. Since rigid motions and translations preserve distances, there is no unique $\P$ corresponding to a given squared distance matrix $\D$. From here on, we assume the points are centered at the origin, i.e., for $\one$ as a column vector of ones, $\Pb \bm{1} = \bm{0}$. This implies that $\X \bm{1} = \Pb^\top \Pb \bm{1} = \bm{0}$. We refer to $\Pb$ and $\X$ with this property as centered point and centered Gram matrix, respectively. Since the Gram matrix is invariant under rigid motions, these assumptions allow for a one-to-one correspondence between $\D$ and $\X$.

When we have access to all the distances, a central result in \cite{torgerson1952multidimensional} provides the following one-to-one correspondence between $\D$ and a centered $\X$:
\begin{align}
    \X &= -\frac{1}{2}\J\D\J \label{eqn: D to X},  \\ \
    \D &= \diag(\X)\one^\top +\one\diag(\X)^\top - 2\X, \label{eqn: X to D}
\end{align}
where $\diag(\cdot)$ inputs an $n\times n$ matrix and returns a column vector with the entries along the diagonal, and $\J = \I - \frac{1}{n}\oot$. Once $\X$ is reconstructed using the above formula, $\Pb$ can be computed from the $r$-truncated eigendecomposition of $\X$. It is important to note that, as previously mentioned, $\P$ is unique up to rigid motions. This procedure for computing $\Pb$ from a full squared distance matrix $\D$ is known as classical multidimensional scaling (Classical MDS) \cite{young1938discussion,torgerson1952multidimensional,torgerson1958theory,gower1966some}.

In many practical scenarios, the distance matrix may be incomplete, making classical MDS inapplicable for determining the point configuration. However, notice that $\rank(\X) \leq r$, and one can show that $\rank(\D)\leq r+2$ \cite{dokmanic2015euclidean}. This implies that when $r\ll n$, which is often the case in practice, $\X$ and $\D$ are low-rank. This allows us to utilize a rich library of tools from low-rank matrix completion. With that, one technique is to directly apply matrix completion techniques on $\D$\cite{moreira2017novel}. Let $\Omega \subset \{(i,j) \mid 1 \leq i < j \leq n\}$ denote the set of sampled indices corresponding to the strictly upper-triangular part of the distance matrix. Note that, since a distance matrix is hollow and symmetric, it suffices to consider the samples in the upper-triangular part; that is, if $D_{ij}$ is sampled, $D_{ji}$ is also assumed to be sampled. A matrix completion approach would consider the following optimization program to recover $\D$:
\begin{equation}
\begin{split}
\label{eq: Convex MC objective}
    \minimize_{\Z\in \real^{n\times n}}\quad & 
||\Z||_{*}\\
  \subjectto\quad & Z_{ij} = D_{ij} \quad \forall (i,j)\in \Omega,
	\end{split}
\end{equation}
where $||\cdot ||_{*}$ denotes the nuclear norm, which serves as a convex surrogate for rank\cite{fazel2001rank}. The main idea of these tools is that, under some assumptions, the nuclear norm minimization program reconstructs the true low-rank squared distance matrix exactly with high probability from $\cO(nr\log^2(n))$ randomly sampled entries \cite{candes2005decoding,candes2006robust,candes2009exact,recht2010guaranteed,gross2010note}. Another set of techniques \cite{tasissa2018exact,lai2017solve} focus on recovering the point configuration by using the Gram matrix as an optimization variable, and using only partial information from the entries in $\D$. Specifically, these works consider the following optimization program for the EDG problem
\begin{equation}\label{eq:edg_convex}
\begin{split}
 \minimize_{\X\in \real^{n\times n},\,\X = \X^\top,\, \X\succeq \bm{0},\X\bm{1}=\bm{0}}& \quad  ||\X||_{*} \\
 \subjectto \quad \quad \quad~ & \quad X_{ii}+X_{jj}-2X_{ij} = D_{ij} \quad \forall(i,j)\in \Omega,
\end{split}
\end{equation}
where the constraints follow from the relation of $\X$ and $\D$ in \eqref{eqn: X to D} and \eqref{eqn: D to X}.  Due to the challenge of working with the constraints imposed by distance matrices, i.e., an entrywise triangle inequality that must be satisfied in order to remain a distance matrix, this work will follow the latter approach of optimizing over the Gram matrix. We note that, in contrast to completing the square distance matrix $\D$ which has rank at most $r+2$, employing a minimization approach based on a Gram matrix that has rank at most $r$ implicitly enforces the constraints of the Euclidean distances. Recent works have indicated that this approach can achieve better sampling complexity than direct distance matrix completion\cite{tasissa2018exact,lai2017solve,Li2024}.

We note that theoretical guarantees for \eqref{eq:edg_convex} have been established in \cite{tasissa2021low,tasissa2018exact}, but still suffer from the lack of scalability of convex techniques. A non-convex Lagrangian formulation was also proposed in \cite{tasissa2018exact}, yielding strong numerical results but lacking local convergence guarantees. The work in \cite{Nguyen2019} uses a Riemannian manifold approach to develop a conjugate gradient algorithm for estimating the underlying Gram matrix. The theoretical analysis therein shows that the squared distance matrix iterates globally converge to the true squared distance matrix at the sampled entries under three assumptions. However, the relationship between the problem parameters, such as the sampling scheme and sampled entries, and the third assumption remains unclear, as noted in Remark III.8 of the paper. In \cite{Li2024}, the authors introduce a Riemannian conjugate gradient method with line search for the EDG problem. The paper provides a local convergence analysis for the case where the entries of the distance matrix are sampled according to the Bernoulli model given a suitable initialization. The initialization method used is known as rank reduction, which begins with initial points embedded in a higher-dimensional space than the target dimension. While \cite{Li2024} demonstrates strong empirical results for this initialization via tests on synthetic data for sensor localization, there are no provable guarantees provided for the initialization. In this manuscript, we aim to present a provable non-convex algorithm for the EDG problem, along with a provable initialization. 

\subsection{Contributions}
The main contributions of this paper are as follows:

\begin{enumerate} [leftmargin=*]
    \item {\bf Construction of two novel algorithms:} We propose two non-convex iterative algorithms in a Riemannian optimization framework for the Euclidean distance geometry problem. These algorithms are both smooth, first-order methods on the manifold of rank-$r$ matrices for a fixed $r$, and have low computational complexity per iteration. 
    \item {\bf Two different initialization methods:} We propose two different structured initializations from partial measurements, and prove an error bound between the initializations and true solution. Both initializations are relatively simple and require minimal a priori knowledge of the ground truth matrix, save from the measurements necessary to construct the algorithm.
    \item {\bf Convergence guarantees and sample complexity requirements:} We provide theoretical analysis that ensures high probability local convergence of one of the algorithms to the ground truth solution. Along with this characterization of an attractive basin, we prove sample complexity results for the initialization methods to guarantee the algorithm's starting point within the attractive basin.
\end{enumerate}

\subsection{Notation}
We briefly summarize the notation used throughout this paper below. In general, uppercase boldface scripts, such as $\A$, will denote matrices, lowercase boldface scripts, such as $\v$, will denote vectors, calligraphic scripts, such as $\mathcal{A}$, will denote linear operators on matrices, and blackboard bold font, such as $\mathbb{V}$, will denote vector spaces and subspaces. $\X^\top$ denotes the transpose of $\X$, $\Tr(\X)$ as the trace of $\X$, $\la \A,\B\ra = \Tr(\A^\top\B)$ denotes the trace inner product, and $\delta_{ij}$ denotes the Kronecker delta. We denote the $(i,j)$-th entry of a matrix $\X$ by $X_{ij}$. By $\one$, we mean this to be a column vector of ones, of a size determined by the context, and by $\bm{0}$ we mean either a column vector of zeros or a matrix of zeros. $\e_i$ denotes a vector of zeros except a $1$ at the $i$-th position. We denote $\Vert \x\Vert_2$ to be the standard $l_2$ norm on $\real^n$, $\Vert\X\Vert_\fro$ to be the Frobenius norm on $\Rnn$, $\Vert \X\Vert$ to be the operator norm of a matrix, $\Vert \X\Vert_\infty$ to be the maximum element of $\X$, and $\Vert \X\Vert_\star$ to be the nuclear norm of $\X$. We denote $\Vert\mathcal{A}\Vert = \sup_{\Vert\X\Vert_\fro = 1}\Vert\mathcal{A}(\X)\Vert_\fro$ to be the operator norm of linear operators on matrices, and $\lambda_\mathrm{max}(\X)/\lambda_\mathrm{min}(\X)$ to the maximal/minimal eigenvalue of a matrix. We denote $\odot$ as the Hadamard product between two matrices. We denote the $i$-th row of a matrix $\X$ by $\X^{(i)}$, and the $i$-th column by $\X_{(i)}$. We denote the universal set of indices as $\mathbb{I}$ and random subsets of $\mathbb{I}$ by $\Omega$. We denote the empty set as $\emptyset$. We denote the standard matrix basis as $\{\e_{ij}\}_{i,j = 1}^n$, where $\e_{ij} = \e_i\e_j^\top$, which is zero everywhere except a $1$ in the $(i,j)$-th entry. We denote the map $\Vec(\cdot)$ as the operation that takes in a matrix $\Y\in\Rnn$ and returns a column vector, with each column of $\Y$ stacked in order, in $\real^{n^2}$. We define the thin spectral decomposition of a symmetric rank-$r$ matrix as $\Y = \U\D\U^\top$, where $\U\in\real^{n\times r}$ and $\D\in\real^{r\times r}$. We define $\mathcal{I}$ as the identity operator on matrices, and $\I$ as the identity matrix. We denote the condition number $\kappa$ of a rank-$r$ matrix $\Y$ as $\kappa = \frac{\Vert \Y\Vert}{\sigma_r(\Y)}$, where $\sigma_r(\Y)$ is the smallest non-zero singular value.

We denote the manifold of rank-$r$ matrices as $\mfr$, and general smooth manifolds as $\mf$. We denote the tangent space of the ground truth solution $\X\in\mfr$ to be $\T$, and the tangent space of the $l$-th iterate in the iterative sequences defined in Section~\ref{sec: Analysis results} as $\T_l$. We denote the Euclidean gradient of a function $f\in C^1(\real^n)$ as $\nabla f$, and the Riemannian gradient of a function $f\in C^1(\mf)$ as $\mathrm{grad}\,f$.

\subsection{Organization}
The organization of this paper is as follows. In Section~\ref{sec: background}, we discuss the requisite background information necessary to understand the work done in this paper. This consists of a brief discussion of dual bases of a vector space, first-order retraction-based Riemannian optimization methods, low-rank matrix completion, and discussion of EDG. Section~\ref{sec: Related work} discusses related geometric approaches in matrix completion, relevant work done in EDG, and a more detailed discussion of geometric approaches to EDG. Section~\ref{sec: R_omega alg section} is a discussion of our two proposed methodologies for solving the EDG problem using geometric low-rank matrix completion ideas in the developed dual basis framework. Section~\ref{sec: Analysis results} discusses the underlying assumptions, convergence analysis, and initialization guarantees of one of the proposed algorithms, with most proofs deferred to the Appendices. The convergence analysis leverages the discussed dual basis structure, with properties proven in Appendix~\ref{appendix: dual basis}, to get local convergence guarantees, discussed in more detail in Appendix~\ref{appendix: local convergence}. We additionally provide initialization guarantees in this section, with relevant proofs in Appendix~\ref{appendix: initialization}. Section~\ref{sec: Numerics} discusses the numerical results of these algorithms. We conclude the paper in Section~\ref{sec: Conclusion} with a brief discussion of the work and possible future research directions.

\section{Background}\label{sec: background}
In this section, we will provide some minor background necessary to understand the work done in the following sections.

\subsection{Dual Basis}
In a finite dimensional vector space of matrices $\mathbb{V}$, where $\mathrm{dim}(\mathbb{V}) = n$, a basis is a linearly independent set of matrices $B = \{\X_i\}_{i=1}^{n}$ that spans $\mathbb{V}$. Any basis for a finite dimensional vector space admits a dual, or bi-orthogonal, basis denoted $B^{*} = \{\Y_i\}_{i=1}^{n}$ that also spans $\mathbb{V}$, and admits a bi-orthogonality relationship
\[
\langle \X_i,\Y_j \ra = \delta_{ij}.
\]
Additionally, $B$ uniquely determines $B^*$. The bi-orthogonality relationship allows for the decomposition of any matrix $\Z\in \mathbb{V}$ as follows:
\[
\Z = \sum_{i=1}^{n} \langle \Z,\Y_i\rangle \X_i = \sum_{i=1}^{n} \langle \Z,\X_i\rangle \Y_i. 
\]
We define the Gram, or correlation matrix, $\H\in\Rnn$, for $B$ as $H_{ij} = \la\X_i,\X_j\ra$, and let $H^{ij} = (\H^{-1})_{ij}$. It is straightforward to show that $\Y_i = \sum_{j=1}^{n} H^{ij}\X_j$ generates $B^*$, and similarly that $\X_i = \sum_{j=1}^{n}H_{ij}\Y_j$ \cite{bhatia2013matrix}.

\subsection{Riemannian Optimization}\label{subsec: intro to RO}

The primary setting for this work is the Riemannian manifold of fixed-rank matrices. Throughout this work, we will only be considering square $n \times n$ matrices for simplicity and relevance to the problem of interest in this paper. For a fixed positive integer $r\leq n$, we denote the set $\mfr = \{\X\in\Rnn~\vert ~\mathrm{rank}(\X) = r\}$. Although not obvious at first glance, it is well-known that $\mfr$ is a smooth Riemannian manifold\cite{vandereycken2012lowrank,Boumal2023}. To make this a Riemannian manifold, we equip it with the standard trace inner product as a metric, or $\langle\A,\B\rangle = \Tr(\A^\top\B)$, restricted to the tangent bundle $T\mfr$, which is the disjoint union of tangent spaces\cite{Boumal2023}.

Additionally, the tangent space at a point $\X\in\mfr$ is known and can be characterized \cite{vandereycken2012lowrank,Boumal2023,wei2020guarantees}. For notational simplicity, and of relevance in the context of optimization, assume that $\X$ is the ground truth solution to an objective function. We additionally assume that $\X = \X^\top$, as all the matrices we consider are symmetric. The following ideas can be re-stated for rectangular matrices using a singular value decomposition, but these are not the subject of this paper. As such, we denote the tangent space at $\X$ as $\T$, and for a sequence of iterates $\{\X_l\}_{l\geq0}$, we refer to their respective tangent spaces as $\T_l$. To characterize $\T$, let $\X = \U\D\U^\top$ be the thin spectral decomposition of $\X$. The tangent space $\T$ can be computed as follows:
\[
\T = \{\U \Z^\top + \Z\U^\top ~\vert~ \Z\in\real^{n\times r}\}.
\]
The tangent space can be described as the set of all possible rank-up-to-$2r$ perturbations, represented as the sum of a perturbation in the column and row space, and is computed by looking at first-order perturbations of the spectral decomposition of $\X$\cite{vandereycken2012lowrank}. Additionally, we can compute the orthogonal projection of an arbitrary $\Y\in\Rnn$ onto the tangent space at a point $T_{\X} \mfr$ as follows \cite{vandereycken2012lowrank,Boumal2023,wei2020guarantees}:
\begin{equation*}
    \Pt \Y  = \Pu\Y + \Y\Pu - \Pu\Y\Pu
\end{equation*}
where $\Pu = \U\U^\top$ is the orthogonal projection onto the subspace spanned by the $r$ columns of $\U$.

Optimization over $\mfr$ has been investigated in detail for quite some time, and in particular retraction-based methods are of particular interest to this work \cite{Absil2008,shalit2012,vandereycken2012lowrank,wei2016guarantees,cai2019accaltproj,wei2020guarantees,cai2021asap,hamm2022RieCUR}. First-order retraction-based methodologies rely on the general principle of taking a descent step in the tangent space, followed by a retraction onto the manifold. In the case of first-order optimization on $\mfr$, the retraction map $\mathcal{H}_r$ is given by the hard thresholding operator, which is a thin spectral decomposition that takes $\Y=\sum_{i=1}^n\lambda_i\u_i\u_i^\top\mapsto\sum_{i=1}^r \lambda_i\u_i\u_i^\top$, where $\vert\lambda_1\vert\geq ...\geq \vert\lambda_n\vert$ are the ordered eigenvalues of $\Y$ and $\u_i$ are the corresponding eigenvectors of $\Y$.

In order to construct a first-order method on $\mfr$, we need to define the notion of a Riemannian gradient. This object can be constructed in a greater degree of generality than our approach, but for simplicity, we will assume that a function $f:\mfr\to \real$ can be smoothly extended to all of $\Rnn$. That is to say, if we consider $f:\Rnn\to\real$, the Riemannian gradient of $f\big\vert_{\mfr}$, denoted $\mathrm{grad} ~f$, for $\X_l\in\mfr$ is given by:
\[
\mathrm{grad}\,f(\X_l) = \Ptl \nabla f(\X_l),
\]
where $\nabla f$ is the Euclidean gradient of $f$. Using this approach, we can now define a Riemannian gradient descent iterate sequence using our retraction map, Riemannian gradient, and some step size sequence $\{\alpha_l\}_{l\geq 0}$ as follows:
\begin{equation}\label{eqn: first order retraction}
\X_{l+1} = \mathcal{H}_r(\X_l-\alpha_l \Ptl \nabla f(\X_l)).
\end{equation}
Intuitively, this algorithm seeks to look at changes in the objective function that lie, locally, along the manifold, followed by a retraction to stay on the desired manifold. An illustration can be seen in Figure~\ref{fig:First order retraction figure}.

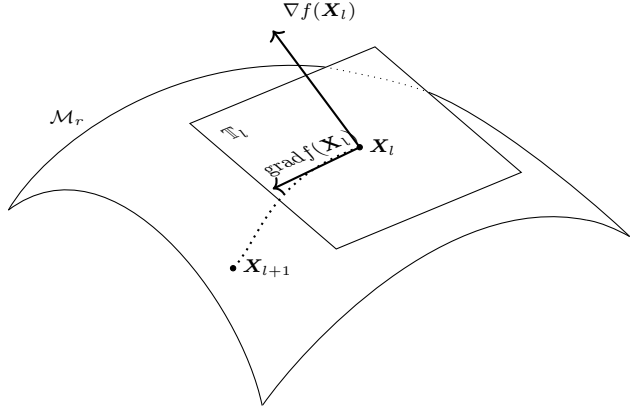
\begin{figure}
    \centering
\begin{tikzpicture}[x=1pt, y=1pt, yscale=-1]

  height of 300

  \draw (112.04,66.62) node[below right, font=\scriptsize]{$\mfr$};
  \draw (100,108) .. controls (140,78) and (188,143) 
                  .. (196,182) ;
  \draw (196.2,181.8) .. controls (211.8,159.6) and (265.71,101.03) 
                      .. (318.8,111.8) ;
  \draw (318.8,111.8) .. controls (323.67,112.42) and (330.33,115.42) 
                      .. (335,118) ;
  \draw[dotted] (219.48,53.8) .. controls (230.71,56.06) and (244.01,58.07) 
                              .. (259,63.39) ;
  \draw (259,63.39) .. controls (288.81,75.27) and (332.14,114.93) 
                    .. (335,118) ;
  \draw (100,108) .. controls (113.56,84.56) and (160.04,48.06) 
                  .. (219.48,53.8) ;

  \filldraw (232.9,84.2) circle (1pt)
                         node[right, font=\scriptsize]{$\X_l$};

  \draw[thick,->] (232.9,84.2) --  (200,40)
                    node[above right,font=\scriptsize]{$\nabla f(\X_l)$};

  \draw[dotted,thick] (232.9,84.2) .. controls (198,96) and (195,120) 
                .. (185,130);

  \draw[thick, ->] (232.9,84.2) -- (200,100)
                        node[above right,font=\scriptsize,rotate=-337.86,
                           xslant=-0.54]{$\textrm{grad}f(\X_l)$};

    \filldraw (185,130)circle (1pt)
                         node[right, font=\scriptsize]{$\X_{l+1}$};                       

  \draw (238.68,46.2) -- (294.01,93.72) 
                      -- (224.01,122.72) 
                      -- (168.68,75.2) 
                      -- cycle ;
  \draw (173.51,76.9) node[below right, 
                           font=\scriptsize, 
                           rotate=-337.86,
                           xslant=-0.54]{$\T_l$};
\end{tikzpicture}

    \caption{A diagram of a simple first-order retraction method on $\mfr$. Again, $\nabla f(\X_l)$ is the Euclidean gradient of $f$ at $\X_l$, $\mathrm{grad}\, f(\X_l)$ is the Riemannian gradient at $\X_l$, and $\X_{l+1} = \mathcal{H}_r(\X_l - \alpha_l\mathrm{grad}\,f(\X_l))$, as in \eqref{eqn: first order retraction}.}
    \label{fig:First order retraction figure}
\end{figure}

This is a simple first pass to first-order optimization on Riemannian manifolds, and is not meant to be exhaustive. Interested readers should consult \cite{Absil2008,Boumal2023} for further details on first-order methods on matrix (and other Riemannian) manifolds, along with convergence analysis for these algorithms.

\subsection{Matrix Completion}
One of the primary components this work relies on is the field of low-rank matrix completion, where a subset of the entries of a low-rank ground truth matrix $\X$ are observed. Consider $\X$ as an $n \times n$ matrix for simplicity, with $\Omega\subset [n]\times[n]$ representing the set of observed indices. Here, a sampling operator $\Po:\Rnn\to\Rnn$ is introduced, which aggregates the observed entries of $\X$ projected onto specific basis elements $\e_{ij}$:
\begin{equation}\label{eqn: Po definition}
    \Po(\X) = \sum_{(i,j)\in\Omega}\langle \X,\e_{ij}\rangle \e_{ij}.
\end{equation}
If $\Omega$ does not contain any repeated indices, $\Po$ is an orthogonal projection operator. The standard low-rank matrix completion problem can be phrased as follows:
\begin{equation*}
    \minimize_{\Y\in\Rnn} ~\rank(\Y) ~ \subjectto ~ \Po(\Y) = \Po(\X).
\end{equation*}
As minimizing the rank directly is generally a challenging problem\cite{candes2009exact,meka2008rank}, relaxations of this problem are often considered. For details on complexity class of rank constrained problems, we refer the reader to \cite{bertsimas2022mixed}. 
Exact recovery of $\X$ from $\Po(\X)$ using a convex relaxation to the nuclear norm, such as the objective described in \eqref{eq: Convex MC objective},  is a well-studied problem \cite{candes2006robust,recht2011simpler,gross2011recovering} with strong convergence guarantees. This problem is at the core of matrix completion literature, and has inspired work in the completion of distance matrices \cite{lai2017solve,tasissa2018exact}. However, solving the convex problem is expensive for large matrices, which has led to the consideration of non-convex methodologies to solve the underlying problem. One approach that has received a great deal of attention is the Burer-Monteiro factorization approach, pioneered for semi-definite methods in \cite{burer2003nonlinear}, whereby a low rank matrix $\X\in\Rnn$ can be factored into a product $\X = \A\B^\top$ for $\A,\B\in\real^{n\times r}$. Minimizing $\Vert \Po(\X) - \Po(\A\B^\top)\Vert_\fro^2$ is a common approach, and is often dealt with using alternating minimization methods in both the noiseless and noisy case \cite{jain2013low,Hardt2014,zhang2016provable,Chen2020}.
\\

\subsection{Dual Basis Approach to EDG}\label{subsec: Dual Basis EDG}

In the EDG problem, using the relation \eqref{eqn: X to D}, we can relate each entry of the squared distance matrix to the Gram matrix
as follows: $D_{ij} = X_{ii} + X_{jj} - X_{ij}-X_{ji}$. We describe here briefly the dual
basis approach introduced in \cite{tasissa2018exact}. Given $\alphab = (\alpha_1, \alpha_2), \alpha_1 < \alpha_2$,
we define the matrix $\wa$ as follows:
\begin{align*}
    \wa = \e_{\alpha_1\alpha_1}+\e_{\alpha_2\alpha_2}-\e_{\alpha_1\alpha_2}-\e_{\alpha_2\alpha_1}.
 \end{align*}
If we consider the set $\mathbb{I}=\{(\alpha_1,\alpha_2),1\le \alpha_1<\alpha_2\le n\}$, it can be checked that
the set $\{ \wa \}$ is a non-orthogonal basis for the subspace of symmetric matrices with zero row sum, denoted $\mathbb{S} = \{\Y\in\Rnn~\vert~\Y=\Y^\top, \Y\one=\bm{0}\}$. In fact, for any two pairs of indices $\alphab,\betab\in\mathbb{I}$, we have:
\begin{equation*}
    \langle\wa,\wb\rangle = \begin{cases}
        4 & \alphab=\betab;\\
        1 & \alphab\neq\betab,~\alphab\cap\betab\neq\emptyset;\\
        0 & \alphab\cap\betab = \emptyset.
    \end{cases}
\end{equation*}
It can also easily be verified that the dimension of the linear space $\mathbb{S}$ is $L=n(n-1)/2$.
Using this basis, we can realize each entry of the squared distance matrix as the trace inner product of the Gram matrix with the basis. Formally, $D_{ij} = \langle \X, \wa \rangle$ for $\alphab = (i,j)$. Further, we can introduce the dual basis
to $\{ \wa \}$, denoted as $\{ \va \}$, and represent any centered Gram matrix $\X$ using
the following expansion:
\[
\X = \sum_{\alphab} \langle \X\,,\wa\rangle \va.
\]
The advantage of the dual basis representation is that it allows us to recast the EDG problem as a low-rank matrix recovery problem where we observe a subset
of the expansion coefficients. In \cite{tasissa2018exact}, this dual basis formulation has been used to provide theoretical guarantees for the convex program given in \eqref{eq:edg_convex}. 

To make use of the dual basis approach both in theory and applications, one of the first steps is to have a representation
of the dual basis that is easier to use. The direct form of the dual basis, based on its definition, relies on an inverse of a matrix of size $L\times L$ which requires the solution of a large linear system. In \cite{lichtenberg2023dual},
it was shown that the dual basis admits a simple explicit form
\begin{equation}\label{eqn: v_a form}
    \va = -\frac{1}{2}\left(\a\b^\top + \b\a^\top\right),
\end{equation}
where $\a = \e_i - \frac{1}{n}\one$ and $\b = \e_j - \frac{1}{n}\one$ for $\alphab = (i,j)$.
We now highlight
a few operators that are related to the dual basis approach. The first one is the sampling operator
$\Ro:\mathbb{S}\to\mathbb{S}$ defined as follows:
\begin{equation*}
    \Ro(\cdot) = \sum_{\alphab\in\Omega}\langle\cdot,\wa\rangle\va.
\end{equation*}

The bi-orthogonality relationship of the dual basis gives that $\Ro^2 = \Ro$ if $\Omega$ does not have repeated indices, and that
\begin{equation*}
    \Ro^\star(\cdot) = \sum_{\alphab\in\Omega}\la \cdot,\va\ra\wa.
\end{equation*}
Due to the lack of self-adjointness, $\Ro$ without repeated indices in $\Omega$ is not an orthogonal projection operator, and is instead an oblique projection operator. In \cite{Smith2023}, $\Ro(\X)$ is related to the sampling operator $\Po(\D)$ as follows:
\begin{equation}\label{eqn: Ro and Po equiv}
    \Ro(\X) = -\frac{1}{2} \J\Po(\D)\J,
\end{equation}
where $\J$ is as defined in Section~\ref{sec: Introduction}. The next operator is the restricted frame operator $\Fo:\mathbb{S}\to\mathbb{S}$, first studied in \cite{tasissa2018exact}, and defined as
\begin{equation}\label{eqn: F_omega}
    \Fo(\cdot) = \sum_{\alphab\in\Omega}\la\cdot,\wa\ra\wa.
\end{equation}
This operator is self-adjoint, positive semi-definite, but unlike $\Ro$, does not reference the dual basis. We note that this operator under a different name was critical to the analysis of the algorithm in \cite{Li2024}.  

\section{Related Work}\label{sec: Related work}

\subsection{A Riemannian Approach to Matrix Completion}\label{subsec: RO and MC}

A notable non-convex approach is to utilize prior knowledge regarding the rank of $\X$. This methodology centers around the fact that the set of fixed-rank matrices forms a Riemannian manifold, turning the problem into an unconstrained optimization task over a manifold. These methodologies lose convexity, however, and generally only local convergence guarantees can be established, done by proving the existence of attractive basins around solutions. Various retraction-based methodologies have been used with differing metrics and geometric structures\cite{vandereycken2012lowrank,mishra2013fixedrank,Boumal2015,dai2010,keshavan2010matrix,keshavan2010matrix2,wei2020guarantees}. The analysis conducted by \cite{wei2020guarantees} stands out for its interpretation of its first-order method as an iterative hard-thresholding algorithm with subspace projections and efficient numerical implementation. This implementation is done by reducing the hard thresholding step from a thin eigenvalue decomposition of an $n\times n$ matrix to a thin QR decomposition followed by a full eigenvalue decomposition of a far smaller $2r\times 2r$ matrix. The convergence analysis in this work builds on the analysis done in \cite{wei2020guarantees}, and as such, a brief exposition of their work is provided.

 In \cite{wei2020guarantees}, the authors develop a gradient descent algorithm to solve the low-rank matrix completion problem leveraging this Riemannian structure. The objective function used in \cite{wei2020guarantees} is as follows:
\begin{equation}\label{eqn: P_omega qf objective}
    \minimize_{\Y\in\Rnn} ~\langle \Y-\X,\Po(\Y-\X)\rangle ~ \subjectto ~ \rank(\Y) = r.
\end{equation}
The authors used a uniform sampling at random with replacement model for recovering a subset of the indices of the ground truth matrix. This is standard practice in existing matrix completion literature, as much of the analysis relies on concentration inequalities for sums of random matrices to get high probability guarantees. It follows that \eqref{eqn: P_omega qf objective} is not equivalent to $\Vert \Po(\X-\M)\Vert_\fro^2$ when indices in $\Omega$ repeat, as $\Po^2 \neq \Po$ when this occurs. This is distinct from \cite{vandereycken2012lowrank}, which minimized the Frobenius norm difference between the observed entries of the low-rank matrices to solve the problem. Additionally, \cite{vandereycken2012lowrank} demonstrates that the limit of their proposed algorithm agrees with the ground truth in the revealed entries when projected onto the tangent space of the ground truth. However, as the sampling operator has a non-trivial null space, noted in \cite{vandereycken2012lowrank}, this does not necessarily guarantee identification of the ground truth. In contrast, \cite{wei2020guarantees} establishes linear convergence to the ground truth solution in a local neighborhood of the ground truth, with high probability. After defining \eqref{eqn: P_omega qf objective}, \cite{wei2020guarantees} constructs a Riemannian gradient descent procedure similar to the retraction procedure described in Section~\ref{subsec: intro to RO} for its solution.

In addition to this approach, the work in \cite{wei2020guarantees} considered two initialization schemes. One is a simple one-step hard threshold onto $\mfr$, and is given by $\X_0 = \frac{n^2}{m}\mathcal{H}_r(\Po(\M))$. Additionally, a more delicate initialization can be considered by partitioning the set $\Omega$ into $S$ equally sized subsets, and performing one Riemannian gradient descent step for each subset. This Riemannian resampling initialization breaks the dependence on each iterate from the previous, and provides a more reliable initialization for large enough sample sizes. A modification of this technique, applied to our scheme, can be seen in Algorithm~\ref{alg:Resampling Algorithm}.

\subsection{Euclidean Distance Geometry Algorithms}

To solve the EDG problem, various algorithms have been developed. Among them, one prominent family of algorithms is based on semi-definite programming (SDP), which leverages the connection between squared distance matrices and Gram matrices. To provide a concrete example of this approach, we briefly outline the method proposed in \cite{alfakih1999solving}. Consider the matrix $\bm{V} \in \mathbb{R}^{n\times (n-1)}$, whose columns form an orthonormal basis for the space $\{\bm{z} \in \mathbb{R}^{n} : \bm{z}^\top\bm{1} = 0\}$. The operator $\mathcal{K}$ is defined as:
\[
\mathcal{K}(\bm{X}) =  \text{diag}(\bm{X})\bm{1}^\top + \bm{1}\text{diag}(\bm{X})^\top - 2\bm{X}.
\]
This definition of the operator $\mathcal{K}(\bm{X})$ is equivalent to the mapping of the Gram matrix to the squared Euclidean distance matrix, as expressed in \eqref{eqn: X to D}. In \cite{alfakih1999solving}, the optimization program is based on the operator $\mathcal{K}_{\bm{V}}(\bm{X})$, which is defined as $\mathcal{K}_{\bm{V}}(\bm{X}) = \bm{V}\bm{X}\bm{V}^\top$. The optimization problem in \cite{alfakih1999solving} can then formulated as follows:
\begin{equation*}
\begin{split}
 \minimize_{\bm{X} \in \mathbb{R}^{(n-1) \times (n-1)}, \,\, \bm{X} = \bm{X}^\top, \, \bm{X} \succeq \bm{0}} & \quad  \sum_{(i,j) \in \Omega} \left[(\mathcal{K}_{\bm{V}}(\bm{V}\bm{X}\bm{V}^\top))_{ij} - D_{ij}\right]^2.
\end{split}
\end{equation*}
We refer the reader to \cite{alfakih1999solving} for theoretical and numerical aspects of the above optimization program.
Given that standard SDP formulations can be computationally intensive, distributed and divide-and-conquer methods have also been explored. For additional SDP-based formulations of the EDG problem and their applications to molecular conformation and sensor network localization, we refer the reader to \cite{biswas2006semidefinite, biswas2004semidefinite, biswas2008distributed, leung2010sdp, alipanahi2012protein, guo2023perturbation}.

In the context of protein structure determination, various algorithmic approaches to EDG have been developed. One notable example is the EMBED algorithm\cite{havel1991evaluation,more1999distance,crippen1988distance}, which comprises three main steps\cite{havel1998distance}. The first step, known as bound smoothing, involves generating lower and upper bounds for all distances by extrapolating from the available limits of known distances. The second step is the embed step, where distances are sampled from these bounds to form a full distance matrix from which an initial estimate of the protein structure is obtained. The final step involves refining this initial structure by minimizing an energy function using non-convex optimization methods. Another approach to structure prediction is the discretizable molecular distance geometry framework, which can be formulated as a search in a discrete space and then uses a Branch-and-Prune method to estimate the structure \cite{lavor2012recent,lavor2012discretizable}. 

Another category of approaches to the EDG problem involves initially estimating a smaller portion of the point cloud and then using this initial estimate to incrementally reconstruct the rest of of the structure. These methods are referred to as geometric build-up algorithms\cite{wu2007updated,dong2003geometric,sit2009geometric}. The algorithm proposed in \cite{hendrickson1995molecule} addresses the molecular conformation problem by adopting a divide-and-conquer strategy, where a sequence of smaller optimization problems is solved instead of solving a single global optimization problem. Finally, we highlight algorithms that estimate the underlying points through non-convex optimization, utilizing a combination of methods such as majorization, alternating projection, and global continuation (transforming the optimization problem to a function with few local minimizers) \cite{leeuw1977application,
more1997global,glunt1993molecular,fang2012euclidean}. We note that the above discussion does not comprehensively cover all EDG algorithms, and we refer readers to \cite{liberti2014euclidean,dokmanic2015euclidean} for a more detailed overview.

\subsubsection{Related Geometric Approaches to EDG}
The main perspective taken in this paper is in line with low-rank matrix completion approach, albeit not one that employs the trace heuristic seen in \cite{tasissa2018exact,biswas2006semidefinite,Javanmard_2012}. This work is more in line with non-convex approaches based on optimizing over a Riemannian manifold \cite{Nguyen2019,Parhizkar2013}, and extends the Riemannian approach of \cite{wei2020guarantees} to the EDG basis case. 

A recent work in \cite{Li2024} adopts a similar approach to us and considers solving the EDG problem through Riemannian methods as well. In this work, the authors use a Riemannian conjugate method paired with an inexact line search method to minimize the following s-stress objective function:
\begin{equation}\label{eqn: s-stress fn}
    \minimize_{\Y\in\R^{n\times d}} ~\frac{1}{2}\Vert \W\odot \Po(g(\Y\Y^\top)-\D_e)\Vert_\fro^2,
\end{equation}
where $g$ is the map defined by \eqref{eqn: X to D}, $\W$ is a weight matrix to model noisy entries, and $\odot$ is the Hadamard product, and $\Po$ is defined as in \eqref{eqn: Po definition}. The analysis in \cite{Li2024} centers around the minimization of the s-stress function in \eqref{eqn: s-stress fn} using a generalization of a Hager-Zhang line search method to a Riemannian quotient manifold. The main result in this work is that there exists an attractive basin for \eqref{eqn: s-stress fn} that, with high probability, gives linear convergence to the ground truth provided an initialization in the basin. This result requires a Bernoulli sample complexity $p>C\frac{(\nu r)^3log(n)}{n}$ , where $\nu$ is the coherence of the ground truth matrix and $r$ is the rank. Our method also describes two strong initialization candidates for the noiseless EDG recovery problem with provable high probability guarantees, with a sample complexity that only depends quadratically on the coherence and rank.

We provide a separate convergence analysis from \cite{Li2024}, demonstrating a robust Restricted Isometry Property of a non-self-adjoint sampling operator, and prove local convergence for this non-orthogonal matrix completion problem. This novel approach requires a relaxation away from the minimization of a quadratic form over a manifold, instead considering a linearly contractive sequence in a neighborhood modeled after \cite{wei2020guarantees} and a surrogate step size, to be expanded on later. This approach requires novel analysis of the dual-basis framework discussed in \cite{tasissa2021low,tasissa2018exact,lai2017solve}, mainly centered around careful eigenvalue bounds in tandem with standard matrix completion tools, at a cost of slightly worse sample complexity. Additionally, we extend the initialization techniques of \cite{wei2020guarantees}, and show that our modified approach can provide similar guarantees. The non-self adjoint nature of the EDG sampling operator provides a host of challenges that are resolved through careful analysis of the EDG basis and extensions of its properties beyond what has been discovered already. To the authors' knowledge, this is the first non-convex method that provides high probability guarantees on the initialization methods provided.

\

\section{The Riemannian Dual Basis Approach to EDG} \label{sec: R_omega alg section}

With the goal of translating the standard matrix completion problem to Gram matrix completion for EDG in mind, the most direct adaptation of the work conducted in \cite{wei2020guarantees} would be defining an objective function by analogy to \eqref{eqn: P_omega qf objective} as follows:
\begin{equation*}
    \minimize_{\Y\in\mathbb{S}} ~\langle \Y-\X,\Ro(\Y-\X)\rangle ~\subjectto ~\mathrm{rank}(\Y) = r.
\end{equation*}
However, a notable challenge arises: computing the Euclidean gradient of the objective function necessitates unavailable information in the form $\langle \X,\va\rangle$ from $\Ro^\star(\X)$ as
\begin{equation*}
    \nabla_{\Y} \left(\langle \Y-\X,\Ro(\Y-\X)\rangle\right) = \Ro(\Y-\X)+\Ro^\star(\Y-\X),
\end{equation*}
where $\nabla_{\Y}$ denotes the gradient with respect to $\Y$. To circumvent this difficulty, there has been exploration into self-adjoint alternatives to $\Ro$ \cite{Tasissa2021,tasissa2018exact,Smith2023}, one of which we will expand upon shortly. These surrogates allow for the definition of an objective function in analogy to \eqref{eqn: P_omega qf objective}, but as of now lack the requisite theoretical properties for convergence.

The primary surrogate of interest in this work is the restricted frame operator $\Fo$, defined in \eqref{eqn: F_omega}.
This operator is self-adjoint, positive semi-definite, and expresses the same information as $\Ro$ without reference to the dual basis. Additionally, this operator under a different name was critical to the analysis of the algorithm in \cite{Li2024}. Using this operator, we can define the following objective function:
\begin{equation}\label{eqn: F_omega objective function}
    \minimize_{\Y\in\mathbb{S}} ~\frac{1}{2}\langle \Y-\X,\Fo(\Y-\X)\rangle ~\subjectto ~\mathrm{rank}(\Y) = r.
\end{equation}
This is a true quadratic form, minimized over $\mfr$, and can be approached in an identical manner algorithmically as \eqref{eqn: P_omega qf objective}. This operator motivates Algorithm~\ref{alg:F_omega descent}, where the hard thresholding operator $\mathcal{H}_r$ is again defined as the map from $\Y = \sum_{i=1}^n\lambda_i\u_i\u_i^\top \mapsto \sum_{i=1}^r \lambda_i\u_i\u_i^\top$, where $\vert\lambda_1\vert\geq...\geq\vert\lambda_n\vert$:
\begin{algorithm}
\caption{Restricted Frame EDG Riemannian Gradient Descent}\label{alg:F_omega descent}
\begin{algorithmic}
\STATE\textbf{Initialization:} $\X_0 =\U_0\D_0\U_0^\top$
\FOR{$l = 0,1,...$}
    \STATE 1. $\G_l = \Fo(\X-\X_l)$
    \STATE 2. $\alpha_l = \frac{\left\Vert \Ptl\G_l\right\Vert_\fro^2}{\langle \Ptl\G_l,\Fo\Ptl\G_l\rangle }$
    \STATE 3. $\W_l = \X_l + \alpha_l \Ptl\G_l$
    \STATE 4. $\X_{l+1} = \mathcal{H}_r(\W_l)$
\ENDFOR
\STATE\textbf{Output:} $\X_\mathrm{rev}$
\end{algorithmic}
\end{algorithm}

Algorithm~\ref{alg:F_omega descent} is easily implementable and gives strong numerical results, provided in Section~\ref{sec: Numerics}, but proof of local convergence remains an open question. The missing analytical property that would yield local convergence is the Restricted Isometry Property (RIP), which states that a given operator does not distort a matrix too severely when projected on to the tangent space of the ground truth.

\begin{remark}
    More mathematically, let $\mathcal{K}_\Omega$ be a stochastic sampling operator, such as $\Po$, $\Ro$, or $\Fo$. RIP states that with high probability
\begin{equation*}
    \left\Vert\Pt\mathcal{K}_\Omega\Pt-c\Pt\right\Vert\leq\varepsilon_0,
\end{equation*}
for some $\varepsilon_0>0$ and some constant $c>0$.  In practice, this statement is proven using non-commutative concentration inequalities, first introduced in the matrix completion literature in \cite{recht2011simpler,gross2010note}, requiring that $\mathcal{K}_\Omega$ is a sum of i.i.d. random operators and that the expectation of $\Pt\mathcal{K}_\Omega\Pt = c\Pt$ for some $c>0$. It is well-established that $\Po$ possesses this property, and in this paper we show that $\Ro$ also exhibits RIP. The proof, seen in Theorem~\ref{thm: R_omega RIP}, is completed using standard techniques with some specific properties of the dual bases $\{\wa\}_{\ai}$ and $\{\va\}_{\ai}$, and RIP for $\Ro$ is established with only slightly worse sample complexity than for $\Po$. Proving a similar statement for $\Fo$ is challenging, as $\mathbb{E}[\Fo] \neq c\mathcal{I}$. However, numerical evidence strongly indicates that $\Vert\Pt\Fo\Pt - \frac{m}{L}\Pt\Vert$ is in fact small for random ground truth matrices in $\mathbb{S}$,  and the subsequent convergence analysis of Algorithm~\ref{alg:F_omega descent} will be the subject of future work.
\end{remark}

To leverage the analytical properties of $\Ro$ while sidestepping the technical challenges of its non-self-adjoint nature, we define an algorithm by analogy to Algorithm~\ref{alg:F_omega descent} but without any reference to an objective function. As we will show in Section~\ref{sec: Analysis results}, this algorithm will give us strong convergence guarantees with reasonable sample complexities at a cost of interpretability. As such, we define Algorithm~\ref{alg:R_omega descent} to reconstruct a ground truth matrix $\X$ as follows:

\begin{algorithm}[ht!]
\caption{Riemannian Pseudo-Gradient Descent}\label{alg:R_omega descent}
\begin{algorithmic}
\STATE\textbf{Initialization}: $\X_0 = \U_0\D_0\U_0^\top$
\FOR {$l = 0,1,...$}
    \STATE 1. $\G_l = \Ro(\X-\X_l)$
    \STATE 2. $\alpha_l = \mathrm{max}\left\{\frac{\Vert \Ptl\G_l\Vert_\fro^2}{\la \Ptl\G_l,\Ro\Ptl\G_l\ra},0\right\}$
    \STATE 3. $\W_l = \X_l + \alpha_l \Ptl\G_l$
    \STATE 4. $\X_{l+1} = \mathcal{H}_r(\W_l)$
\ENDFOR
\STATE\textbf{Output:} $\X_\mathrm{rev}$
\end{algorithmic}
\end{algorithm}

Unlike in Algorithm~\ref{alg:F_omega descent}, we cannot compute the steepest descent of an objective function in $\T_l$, so we consider a surrogate modeled after each of the preceding algorithms. The maximum with zero in Step 2 of the algorithm is introduced to avoid divergence, as $\Ro$ is not a positive semi-definite operator and the denominator cannot be guaranteed to be positive for arbitrary points in $\mfr$. When $\alpha_l = 0$ occurs, the algorithm terminates. Positive $\alpha_l$ is required for convergence, and the conditions are provided and characterized in Lemma~\ref{lem:Stepsize}. This condition is satisfied in the high-sample regime, where $\varepsilon_0$ is small.

In both of the preceding approaches, the thin spectral decomposition in the gradient descent scheme is the most expensive, especially when $n$ is large. As described previously, the authors in \cite{wei2020guarantees} found an efficient way to reduce the computational complexity of this decomposition from $\mathcal{O}(rn^2)$ to $\mathcal{O}(r^3)+\mathcal{O}(nr^2)$, substantially reducing the cost per iteration, which we implement as well.

Computation of $\Ro(\X)$ can be done efficiently, with a minimal complexity per iteration. This is because a given iterate $\X_l$ can be easily translated to its distance matrix $\D_l$ via \eqref{eqn: X to D}, and through \eqref{eqn: Ro and Po equiv}, $\Ro(\X)$ can be computed in $\mathcal{O}(m)$ operations, for $\vert\Omega\vert = m$. From \cite{Smith2023}, it can be shown that the total cost per loop is approximately $\mathcal{O}(m) + \mathcal{O}(n^2) + \mathcal{O}(mr)+\mathcal{O}(nr^2)+\mathcal{O}(r^3)$ for an $n\times n$ rank-$r$ matrix.

\section{Theoretical Analysis}\label{sec: Analysis results}
In this section, we will provide the main results of this work, which are the local convergence and recovery guarantees for Algorithm~\ref{alg:R_omega descent}, presented in Theorems~\ref{thm: Local Convergence},~\ref{thm: Recovery Guarantee I}, and~\ref{thm: Resampling Recovery Guarantee}. Prior to these guarantees, we will first introduce slightly altered incoherence conditions for the non-orthogonal problem at hand. Pathological cases can arise where a ground truth matrix $\X$ has few non-zero coefficients in a dual basis expansion, which can cause issues in the recovery of said matrix from samples. This is well-studied in the standard matrix completion problem, and is captured in the idea of incoherence with respect to the standard basis. Relating incoherence to the underlying geometry of points is an interesting problem, but this is outside the scope of the current work.
\begin{asp}[Incoherence assumption]\label{asp: Incoherence assumption}
    Let $\X\in\Rnn$ be a rank-$r$ matrix with eigenvalue decomposition $\X = \U\D\U^\top$. We assume that $\X$ is $\frac{\nu}{4}$-incoherent to the basis $\{\wa\}_{\alphab\in\mathbb{I}}$, $\nu$-incoherent to its dual basis $\{\va\}_{\alphab\in\mathbb{I}}$, and $\frac{\nu}{64}$-incoherent in the standard matrix basis; that is, there exists a constant $\nu\geq 1$ such that for all $\alphab = (i,j)\in\mathbb{I}$:
    \begin{equation}\label{eq: Incoherence equations}
        \left\Vert \Pu \e_{ij}\right\Vert_\fro\leq \sqrt{\frac{\nu r}{128n}}, \qquad \left\Vert\Pu\wa\right\Vert_\fro\leq\sqrt{\frac{\nu r}{8n}},\quad \mathrm{and} \quad \left\Vert\Pu\va\right\Vert_\fro\leq\sqrt{\frac{\nu r}{2n}}.
    \end{equation}
    In addition to the above, we require that
        \begin{equation}\label{eq: Pt incoherence equations}
        \left\Vert \Pt \e_{ij}\right\Vert_\fro\leq \sqrt{\frac{\nu r}{128n}}, \qquad \left\Vert\Pt\wa\right\Vert_\fro\leq\sqrt{\frac{\nu r}{8n}},\quad \mathrm{and} \quad\left\Vert\Pt\va\right\Vert_\fro\leq\sqrt{\frac{\nu r}{2n}}.
    \end{equation}
\end{asp}
This assumption is in accordance with both the standard definitions of incoherence as $\left\Vert \Pu\e_{ij}\right\Vert_\fro\leq\left\Vert \Pu\e_i\right\Vert_2$. Additionally, notice that these two definitions are equivalent up to a small constant, as 
\begin{align*}
    \left\Vert\Pt\wa\right\Vert_\fro = \left\Vert \Pu\wa + \wa\Pu - \Pu\wa\Pu\right\Vert_\fro\leq3\left\Vert\Pu\wa\right\Vert_\fro,
\end{align*}
where the first inequality follows from the triangle inequality and the self-adjointness of $\Pu\wa$. As such, we pick a $\nu$ large enough such that the inequalities in \eqref{eq: Incoherence equations} and \eqref{eq: Pt incoherence equations} hold. As in \cite{wei2020guarantees}, we note that the first condition above implies the following:
\begin{align*}
    \left\Vert \Pu \e_{ij}\right\Vert_\fro^2 = \la \Pu\e_{ij},\Pu \e_{ij}\ra
    =\la \Pu\e_{ij},\e_{ij}\ra
    =\Tr\left(\e_{ij}\e_{ji}\Pu\right)
    =\la\Pu,\e_{ii}\ra
    = \left\Vert \U^{(i)}\right\Vert_2^2.
\end{align*}
This indicates that $\left\Vert\U^{(i)}\right\Vert^2_2\leq\frac{\nu r}{128 n}$, which will be relevant when discussing the initialization of Algorithm~\ref{alg:R_omega descent} using a trimming step in Algorithm~\ref{alg:Trimming Algorithm}. 

\begin{remark}\label{rem: incoherence remark}
    We want to note that the first assumption in \eqref{eq: Incoherence equations} actually implies the next two. That is to say, if 
    $\Vert \Pu\e_{ij}\Vert_2\leq\sqrt{\frac{\nu r}{128n}}$, then by the triangle inequality
    \[
    \Vert \Pu\wa\Vert_\fro \leq 4\max_{(i,j)\in\mathbb{I}}\Vert\Pu\e_{ij}\Vert_\fro\leq \sqrt{\frac{\nu r}{8n}}.
    \]
    To see the last result, notice that
        \begin{align*}
        \Vert\Pu\va\Vert_\fro &= \left\Vert \Pu\left(\sum_{\betab\in\mathbb{I}}H^{\alphab\betab}\wb\right)\right\Vert_\fro\\
        &\leq\sum_{\betab\in\mathbb{I}}\left\vert H^{\alphab\betab}\right\vert\Vert\Pu\wb\Vert_\fro\\
        &\leq \sqrt{\frac{\nu r}{8n}}\sum_{\betab\in\mathbb{I}}\left\vert H^{\alphab \betab}\right\vert,
    \end{align*}
    and as $\sum_{\ai}\vert H^{\alphab\betab}\vert \leq 2$ from Lemma~\ref{lem: H and H^-1 eigvals}, the claim follows. A similar proof shows the same relationship for the equations in \eqref{eq: Pt incoherence equations}.
\end{remark}

Additionally, we make an assumption in accordance with \cite{wei2020guarantees}:
\begin{asp}\label{asp: mu_1 assumption}
    Let $\X\in\Rnn$ be a rank-$r$ matrix. We assume that an absolute numerical constant $\mu_1$ such that
    \begin{equation} \label{eqn: mu_1 definition}
        \left\Vert\X\right\Vert_\infty\leq\mu_1\sqrt{\frac{r}{n^2}}\left\Vert \X\right\Vert.
    \end{equation}
\end{asp}

\begin{remark}
    Notice that this condition is equivalent up to scaling factors for a similar assumption in \cite{keshavan2010matrix}, which itself can be upper bounded by a similar coherence condition in \cite{candes2009exact}. In fact, we can relate $\mu_1$ directly to $\nu$ as follows. For simplicity and relevance to our problem, let $\X \succeq \bm{0}$, and notice that
\begin{align*}
    \displaystyle\frac{\Vert \X\Vert_\infty}{\Vert\X\Vert} &= \displaystyle\frac{1}{\Vert\X\Vert} \max_{ij} \vert X_{ij}\vert\\
    &=\displaystyle\frac{1}{\Vert\X\Vert}\max_{ij} \left\vert\sum_{kl} U_{ik}D_{kl}U_{jl}\right\vert\\
    &\leq\max_{ij} \sum_{k=1}^r \left\vert U_{ik}\frac{\lambda_k}{\lambda_1}U_{jk}\right\vert\\
    &\leq \max_{ij} \sum_{k=1}^r \left\vert U_{ik}U_{jk}\right\vert\\ 
    &\leq \sqrt{\sum_{1\leq k\leq r}\vert U_{ik}\vert^2}\sqrt{\sum_{1\leq k\leq r}\vert U_{ik}\vert^2}\\
    &\leq \frac{\nu r}{128 n},
\end{align*}
where the penultimate inequality follows from Cauchy-Schwartz, and the final inequality from \eqref{eq: Incoherence equations}, indicating that in a worst-case scenario $\mu_1\leq\frac{\nu \sqrt{r}}{128}$. This property is ultimately separate from the definition of $\nu$, but at the very least it can be upper bounded as a function of $\nu$.

\end{remark}

Additionally, we are typically interested in large $n$. Assuming that $n\geq 3$ produces uniform results for several bounds in the appendix, and is formally stated as an assumption.
\begin{asp}\label{asp: size}
    For the given ground truth rank-$r$ matrix $\X\in\Rnn$, we assume that $n\geq 3$.
\end{asp}
Throughout the remainder of this work, we will assume that our ground truth matrix $\X\in\mathbb{S}$ satisfies both Assumptions~\ref{asp: Incoherence assumption} and~\ref{asp: mu_1 assumption} with $\cO(1)$ constant factors $\nu$ and $\mu_1$. As in \cite{wei2020guarantees}, we identify a neighborhood in $\mathcal{M}_r$ around which any initial guess in this neighborhood converges linearly to the true solution with high probability.

As mentioned previously, the most critical property for convergence of Algorithm~\ref{alg:R_omega descent} is RIP. This theorem provides the conditions needed for RIP of $\Ro$:
\begin{thm}[Restricted Isometry Property (RIP) for $\Pt\Ro\Pt$]\label{thm: R_omega RIP}
    With probability at least $1-2n^{1-\beta}$,
    \begin{equation*}
        \frac{L}{m}\left\Vert \Pt\Ro\Pt - \frac{m}{L}\Pt\right\Vert \leq \sqrt{\frac{8\beta\nu^2 r^2 n\log(n)}{3m}},
    \end{equation*}
    for $m\geq \frac{8}{3}\beta\nu^2 r^2 n\log(n)$. In particular, 
    \begin{equation*}
        \frac{L}{m}\left\Vert \Pt\Ro\Pt - \frac{m}{L}\Pt\right\Vert \leq \varepsilon_0,
    \end{equation*}
    for any $\varepsilon_0>0$ if $m\geq \frac{8}{3}\beta\left(\frac{\nu r}{\varepsilon_0}\right)^2 n\log(n)$.
    Additionally, under the same conditions as above, we also have
        \begin{equation*}
        \frac{L}{m}\left\Vert \Pt\Ro^\star\Pt - \frac{m}{L}\Pt\right\Vert \leq \varepsilon_0.
    \end{equation*}
\end{thm}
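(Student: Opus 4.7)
The plan is to express $\Pt\Ro\Pt$ as a sum of $m$ i.i.d.\ random rank-one operators on $\S$ and then apply a non-commutative operator Bernstein concentration inequality, following the standard matrix-completion template adapted to the dual-basis setting. Writing $\Omega=\{\alphab_1,\ldots,\alphab_m\}$ with each $\alphab_k$ drawn uniformly and independently from $\univ$, we have
\[
\Pt\Ro\Pt \;=\; \sum_{k=1}^{m}\,\Pt\,\mathcal{T}_{\alphab_k}\,\Pt, \qquad \mathcal{T}_\alphab(\Y) \;=\; \la\Y,\wa\ra\va,
\]
so each $\Pt\,\mathcal{T}_\alphab\,\Pt$ is the rank-one operator $\Pt\va\otimes\Pt\wa$. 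The bi-orthogonality identity $\Y=\sum_{\ai}\la\Y,\wa\ra\va$ on $\S$, combined with the fact that $\X\one=\zeros$ forces $\U^\top\one=\zeros$ and hence that $\Pt$ maps $\S$ into itself, yields $\mathbb{E}[\Pt\,\mathcal{T}_{\alphab_k}\,\Pt]=\tfrac{1}{L}\Pt$. Setting $\mathcal{Y}_k:=\Pt\,\mathcal{T}_{\alphab_k}\,\Pt-\tfrac{1}{L}\Pt$ reduces the theorem to a tail bound for $\bigl\Vert\sum_k \mathcal{Y}_k\bigr\Vert$.

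Because $\mathcal{Y}_k$ is not self-adjoint, I would apply the dilation form of the operator Bernstein inequality, which requires a uniform bound $R\ge\Vert\mathcal{Y}_k\Vert$ and a variance proxy $\sigma^2$ dominating both $\bigl\Vert\sum_k\mathbb{E}[\mathcal{Y}_k\mathcal{Y}_k^\star]\bigr\Vert$ and $\bigl\Vert\sum_k\mathbb{E}[\mathcal{Y}_k^\star\mathcal{Y}_k]\bigr\Vert$. The rank-one factorization immediately gives $\Vert\Pt\,\mathcal{T}_\alphab\,\Pt\Vert=\Vert\Pt\va\Vert_\fro\Vert\Pt\wa\Vert_\fro$, which by the incoherence assumption~\eqref{eq: Pt incoherence equations} is at most $\nu r/(4n)$, so $R\lesssim \nu r/n$. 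For the variance, the clean composition identity $(\Pt\va\otimes\Pt\wa)(\Pt\wa\otimes\Pt\va)=\Vert\Pt\wa\Vert_\fro^2\,(\Pt\va\otimes\Pt\va)$ expresses each $\mathbb{E}[\mathcal{Y}_k\mathcal{Y}_k^\star]$ as an average of positive semi-definite rank-one operators, whose spectral norm is controlled by combining the incoherence bounds on $\Vert\Pt\wa\Vert_\fro^2$ and $\Vert\Pt\va\Vert_\fro^2$ with a Cauchy--Schwarz step to produce the required bound on $\sigma^2$.

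Plugging these quantities into the operator Bernstein inequality with effective dimension of order $n$ (inherited from the range of $\Pt$) and threshold $t=m\varepsilon_0/L$ produces a failure probability of at most $2n^{1-\beta}$ precisely when $m\ge \tfrac{8}{3}\beta(\nu r/\varepsilon_0)^2 n\log n$, matching the stated sample complexity; the intermediate bound $\sqrt{8\beta\nu^2 r^2 n\log(n)/(3m)}$ falls out as the variance-dominated Bernstein tail before the threshold is chosen. The companion bound for $\Pt\Ro^\star\Pt$ follows from the identical template: the adjoint summands are $\wa\otimes\va$ in place of $\va\otimes\wa$, and the companion dual-basis identity $\Y=\sum_{\ai}\la\Y,\va\ra\wa$ on $\S$ produces the same mean and the same variance structure with the roles of $\wa$ and $\va$ swapped. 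I anticipate the main obstacles to be twofold: (i) verifying rigorously that $\Pt$ preserves $\S$ so that the dual-basis collapse is legitimate and the expectation equals $\tfrac{1}{L}\Pt$, and (ii) carrying through the Bernstein bookkeeping with the $L/m$ normalization cleanly, since a naive application produces spurious $L$-dependence that only cancels after careful pairing of the variance and uniform bounds against the rescaled threshold.
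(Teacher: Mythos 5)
Your overall architecture matches the paper's: decompose $\Pt\Ro\Pt$ into $m$ i.i.d.\ rank-one operators $\Pt\va\otimes\Pt\wa$, verify $\mathbb{E}[\Pt\Ro\Pt]=\tfrac{m}{L}\Pt$ via the dual-basis expansion, and apply the non-self-adjoint (dilation) form of operator Bernstein with a uniform bound and a two-sided variance proxy. The uniform bound $R\lesssim\nu r/n$ is correct and matches the paper's $c$.

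The gap is in the variance bound, which is the heart of the proof. Your proposed route --- apply the incoherence bounds $\Vert\Pt\wa\Vert_\fro^2\le\nu r/(8n)$ and $\Vert\Pt\va\Vert_\fro^2\le\nu r/(2n)$ term by term and close with ``a Cauchy--Schwarz step'' --- does not produce a variance proxy of the required size. Summing $L=n(n-1)/2$ rank-one terms each bounded by $O(\nu^2r^2/n^2)$ gives $\sigma^2 = O(m\nu^2r^2/(nL))\cdot n$, i.e.\ a factor of $n$ too large; fed into Bernstein with threshold $t=m\varepsilon_0/L$ this yields sample complexity $m\gtrsim \nu^2r^2 n^2\log n$, not the stated $\nu^2r^2 n\log n$. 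The paper avoids this loss with two structural facts you do not invoke. For $\sum\mathbb{E}[\mathcal{Y}_k\mathcal{Y}_k^\star]$, after pulling out $\Vert\Pt\wa\Vert_\fro^2\le\nu r/(8n)$ one must bound $\max_{\Vert\X\Vert_\fro=1}\sum_{\ai}\la\X,\va\ra^2$ globally by $\lambda_{\mathrm{max}}(\H^{-1})=\tfrac12$ (Lemmas~\ref{lem: vectorization} and~\ref{lem: H and H^-1 eigvals}), not term by term. For $\sum\mathbb{E}[\mathcal{Y}_k^\star\mathcal{Y}_k]$ --- which you acknowledge must be controlled but never actually bound --- the key is the identity $\la\Pt\X,\wa\ra=\la\X\Pu,\Pu\wa\ra$ (Lemma~\ref{lem: Pu wa inner product identity}) combined with the fact that the restricted Gram matrix $\tilde\H=[\la\Pu\wa,\Pu\wb\ra]$ vanishes whenever $\alphab\cap\betab=\emptyset$, so each row has only $O(n)$ nonzero entries of size $O(\nu r/n)$ and Gershgorin gives $\lambda_{\mathrm{max}}(\tilde\H)\le\nu r$ (Lemma~\ref{lem: Bound for largest eigval of H tilde}). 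The naive alternative $\sum_{\ai}\la\X,\Pt\wa\ra^2\le\lambda_{\mathrm{max}}(\H)=2n$ also fails, losing a factor $n/(\nu r)$. Without these two eigenvalue arguments the Bernstein bookkeeping cannot close at the stated rate, so you should treat the variance computation as the main content of the proof rather than a routine step. The remainder of your outline (choice of $t$, the probability $2n^{1-\beta}$, and the repetition for $\Ro^\star$ with the roles of $\wa$ and $\va$ swapped) is consistent with the paper.
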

\begin{proof}[Proof sketch]
   This result is primarily a consequence of Theorem~\ref{thm:Bernstein}, the non-commutative Bernstein inequality. As $\mathbb{E}(\Ro) = \frac{m}{L}\mathcal{I}$, we see that $\mathbb{E}(\Pt\Ro\Pt) = \frac{m}{L}\mathcal{\Pt}$, and the rest of the proof is leveraging specific properties of the dual bases $\{\wa\}$ and $\{\va\}$ to prove concentration of $\Pt\Ro\Pt$ around its expectation. See Appendix~\ref{proof: R_omega RIP} for details.
\end{proof}

This leads us into the following theorem, which is the crux of this work. Theorem~\ref{thm: Local Convergence}, stated with a brief proof outline in the main text, describes a local attractive basin around the ground truth solution, provided that $\Pt\Ro\Pt$ exhibits RIP. With high probability, the algorithm will converge to the ground truth from any initialization in this attractive basin. The full proof is delayed to Appendix~\ref{proof: Local convergence}. 

\begin{thm}[Local Convergence of Algorithm~\ref{alg:R_omega descent}]\label{thm: Local Convergence}
    Let $\X\in\Rnn$ be the measured rank-$r$ matrix and let $\T$ be the tangent space of $\mathcal{M}_r$ at $\X$. Suppose that
    \begin{gather}
            \left\Vert\frac{L}{m}\Pt\Ro\Pt-\Pt\right\Vert\leq\varepsilon_0 \label{eqn: RIP assumption}\\
    \frac{\Vert\X_l-\X\Vert_\fro}{\sigma_\text{min}(\X)}\leq\frac{\sqrt{m}\varepsilon_0}{16n^{5/4}\sqrt{\beta\nu r\log{n}}}\label{eqn: neighborhood assumption}\\
    \Vert\Ro\Vert\leq \frac{m}{L} + 4\sqrt{\frac{8m\log(n)}{n}}\label{eqn: R_o assumption}\\
    \Vert \Ro\Pt\Vert \leq \frac{m}{L} + \frac{m\sqrt{n}}{L}\sqrt{\frac{\beta\nu r n\log(n)}{3m}}\label{eqn: RoPt assumption}\\
    \Vert \Pt\Ro\Vert\leq \frac{m}{L}+\frac{4m\sqrt{n}}{L}\sqrt{\frac{\beta \nu r n\log(n)}{3m}},\label{eqn: PtRo assumption}
    \end{gather}
        where $\varepsilon_0$ is a constant satisfying
        \[
        \delta=\frac{18\varepsilon_0}{1-4\varepsilon_0}<1.
        \]
        Then the algorithm converges linearly as the iterates satisfy
        \[
        \Vert\X_{l+1} - \X\Vert_\fro \leq \delta^l\Vert\X_0-\X\Vert_\fro.
        \]
\end{thm}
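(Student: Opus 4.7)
I would follow the error-contraction template of \cite{wei2020guarantees}, adapted to the non-self-adjoint operator $\Ro$; the three operator-norm hypotheses \eqref{eqn: R_o assumption}--\eqref{eqn: PtRo assumption} serve as substitutes for the single symmetric bound used in the classical self-adjoint analysis. Since $\X_{l+1}=\mathcal{H}_r(\W_l)$ is the best rank-$r$ approximation of $\W_l$ and $\X$ itself has rank at most $r$, the triangle inequality gives $\Vert \X_{l+1}-\X\Vert_\fro \le 2\Vert \W_l-\X\Vert_\fro$, so it suffices to establish the one-step contraction $\Vert \W_l-\X\Vert_\fro \le (\delta/2)\Vert \X_l-\X\Vert_\fro$; the stated linear rate then follows by a straightforward induction on $l$.

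Setting $e_l := \X_l-\X$, the update rewrites as
\[
\W_l - \X \;=\; (\mathcal{I} - \alpha_l\,\Ptl\Ro\Ptl)\Ptl e_l \;-\; \alpha_l\,\Ptl\Ro\Ptl^{\perp} e_l \;+\; \Ptl^{\perp} e_l,
\]
and I will bound the three terms separately. The third term is second order in $\Vert e_l\Vert_\fro$: using $\X_l\in\T_l$ one has $\Ptl^{\perp} e_l = -(\I-\Pul)(\X-\X_l)(\I-\Pul)$, so a Wedin/Davis--Kahan estimate combined with \eqref{eqn: neighborhood assumption} yields $\Vert \Ptl^{\perp} e_l\Vert_\fro \lesssim \Vert e_l\Vert_\fro^{\,2}/\sigma_{\min}(\X)$, which is negligible compared to $\varepsilon_0\Vert e_l\Vert_\fro$.

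For the leading term I need RIP at the moving tangent space $\T_l$. The $\sin\theta$ distance $\Vert \Pu-\Pul\Vert$ is bounded by $\Vert e_l\Vert_\fro/\sigma_{\min}(\X)$, so $\Vert \Pt-\Ptl\Vert$ is of the same order; combining this with \eqref{eqn: RIP assumption} and the neighborhood bound \eqref{eqn: neighborhood assumption} transfers RIP from $\T$ to $\T_l$, giving $\bigl\Vert \tfrac{L}{m}\Ptl\Ro\Ptl-\Ptl\bigr\Vert \le 2\varepsilon_0$. This ensures that the denominator of $\alpha_l$ is positive (so the $\max\{\cdot,0\}$ clipping is inactive) and that $\alpha_l = (L/m)(1+O(\varepsilon_0))$, whence $\Vert(\mathcal{I}-\alpha_l\Ptl\Ro\Ptl)\Ptl e_l\Vert_\fro \le O(\varepsilon_0)\Vert \Ptl e_l\Vert_\fro$. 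The cross term $\alpha_l\,\Ptl\Ro\Ptl^{\perp}e_l$ is controlled via \eqref{eqn: PtRo assumption} on $\Vert \Pt\Ro\Vert$, while the telescoping identities $\Ptl\Ro = \Pt\Ro + (\Ptl-\Pt)\Ro$ and $\Ro\Ptl = \Ro\Pt + \Ro(\Ptl-\Pt)$ used in the RIP transfer require \eqref{eqn: R_o assumption} and \eqref{eqn: RoPt assumption} to bound the resulting perturbation pieces. Assembling the three estimates and using $\Vert \Ptl e_l\Vert_\fro\le\Vert e_l\Vert_\fro$ gives $\Vert \W_l-\X\Vert_\fro \le (\delta/2)\Vert e_l\Vert_\fro$ with $\delta=18\varepsilon_0/(1-4\varepsilon_0)$, and the induction closes because the contraction automatically preserves \eqref{eqn: neighborhood assumption}.

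The main obstacle is the non-self-adjointness of $\Ro$. In the symmetric analysis of \cite{wei2020guarantees}, $\Pt\Po\Pt$ is a genuine positive-semidefinite quadratic form on $\T$ and the step size is automatically positive at the right scale; here the denominator $\la\Ptl \G_l,\Ro\Ptl\G_l\ra$ is not manifestly positive, and one must simultaneously track $\Ro$, $\Ro\Pt$, and $\Pt\Ro$ as three independent operators through both the error decomposition and the $\T$-to-$\T_l$ telescoping. The secondary bookkeeping issue is verifying that the constants from the three bounds package cleanly into the explicit rate $\delta = 18\varepsilon_0/(1-4\varepsilon_0)$, and that the second-order Davis--Kahan term is indeed dominated by $\varepsilon_0\Vert e_l\Vert_\fro$ under the sharp neighborhood condition \eqref{eqn: neighborhood assumption}; this latter balance is precisely what dictates the $n^{5/4}$ factor appearing in the neighborhood radius.
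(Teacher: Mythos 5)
Your plan follows the paper's proof essentially verbatim: the same Eckart--Young reduction to $2\Vert\W_l-\X\Vert_\fro$, the same three-term decomposition into $(\Ptl-\alpha_l\Ptl\Ro\Ptl)e_l$, the cross term $\alpha_l\Ptl\Ro\Ptl^{\perp}e_l$, and the second-order piece $\Ptl^{\perp}e_l$, with local RIP transferred from $\T$ to $\T_l$ via the projection-perturbation bounds, the step-size sandwich, and the same induction to close. The only differences are cosmetic (the paper's local RIP constant is $4\varepsilon_0$ rather than your $2\varepsilon_0$, which is what makes the bookkeeping land exactly on $\delta=18\varepsilon_0/(1-4\varepsilon_0)$), so the proposal is correct and matches the paper's argument.
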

\begin{proof}[Proof sketch of Theorem~\ref{thm: Local Convergence}]
    The theorem begins first by simple linear algebra, as we have
    \begin{align*}
        \Vert \X_{l+1} -\X\Vert_\fro &= \Vert\X_{l+1} -\W_l -\X + \W_l\Vert_\fro\\
        &\leq \Vert \X_{l+1} - \W_l\Vert_\fro + \Vert \X-\W_l\Vert_\fro\\
        &\leq 2\Vert \W_l-\X\Vert_\fro,
    \end{align*}
    where the last inequality follows from $\X_{l+1}$ being the best rank-$r$ approximation to $\W_l$ by Eckart-Young-Mirsky \cite{eckart1936approximation}. Next, plugging in $\W_l = \X_l + \alpha_l\Ptl\G_l$, we see that
    \begin{align*}
        \Vert \X_{l+1}-\X\Vert_\fro&\leq 2\left\Vert \X_l + \alpha_l\Ptl\G_l - \X\right\Vert_\fro\\
        &=2\Vert \X_l-\X -\alpha_l\Ptl\Ro(\X_l-\X)\Vert_\fro\\
        &\leq \underbrace{2\Vert (\Ptl - \alpha_l\Ptl\Ro\Ptl)(\X_l-\X)\Vert_\fro}_{I_1}\\
        &\quad+ \underbrace{2\Vert(I-\Ptl)(\X_l-\X)\Vert_\fro}_{I_2}\\
        &\quad+ \underbrace{2\vert\alpha_l\vert\Vert\Ptl\Ro(I-\Ptl)(\X_l-\X)\Vert}_{I_3}.
    \end{align*}
The remainder of the proof is in the bounding of $I_1$, $I_2$, and $I_3$. $I_1$ is proven by showing that in a neighborhood of the solution, defined by \eqref{eqn: neighborhood assumption}, a local form of RIP for $\Ro$ holds if \eqref{eqn: RIP assumption} is true. This proof leverages the assumptions made in \eqref{eqn: R_o assumption}, \eqref{eqn: RoPt assumption}, and \eqref{eqn: PtRo assumption}. $I_2$ follows from the neighborhood assumption of \eqref{eqn: neighborhood assumption} in tandem with Lemma~\ref{lem: Projection Bounds}, and $I_3$ follows from bounds on the step size (seen in Lemma~\ref{lem:Stepsize}), the assumption in \eqref{eqn: PtRo assumption}, and Lemma~\ref{lem: Projection Bounds}. The assumptions in \eqref{eqn: RIP assumption}, \eqref{eqn: R_o assumption}, \eqref{eqn: RoPt assumption}, and \eqref{eqn: PtRo assumption} are all proven via high probability guarantees using Theorem~\ref{thm:Bernstein}. The technical details are deferred to the appendix, but Figure~\ref{fig: Convergence proof schematic} highlights the main dependencies of each lemma and how they work into the overall convergence. See the full proof in~\ref{proof: Local convergence}.
\end{proof}

\begin{figure}[ht!]\label{fig: Convergence proof schematic}
\centering
\begin{tikzpicture}[
    node distance=1cm,
    every node/.style={rectangle, draw, minimum width=.8cm, minimum height=.8cm, text centered, font =\small},
    arrow/.style={thick,->,>=stealth}
    ]

    \node (Ber)[text width=3.5cm] {Theorem~\ref{thm:Bernstein} \\Non-commutative Bernstein Inequality};
    \node (waprop)[above of=Ber,yshift=2.5cm,text width = 2.5cm]{Section~\ref{appendix: dual basis} Properties of the dual basis };
    \node (RIP)[right=of Ber,yshift = 2cm,xshift = -.5cm,text width = 2.5cm]{Theorem~\ref{thm: R_omega RIP} $\Ro$ RIP};
    \node (LRIP)[right of=RIP,xshift=3cm,text width=2.6cm]{Lemma~\ref{lem: New Local Pseudo proof} Local RIP of $\Ro$};
    \node (step)[below of =LRIP,yshift=-.3cm,text width = 2.5cm]{Lemma~\ref{lem:Stepsize} Stepsize bound};
    \node (I3)[right of =LRIP,xshift=2.5cm]{$I_3$ bound};
    \node (I1)[below of=I3,yshift = -.3cm]{$I_1$ Bound};
    \node (I2)[below of =I1,yshift=-.3cm]{$I_2$ bound};
    \node (proj)[below of =step,yshift=-.3cm,text width = 2.8cm]{Lemma~\ref{lem: Projection Bounds} Projection bounds \cite{wei2020guarantees}};
    \node (1SHT)[below right=of Ber,text width = 2.8cm,xshift=-.4cm]{Lemma~\ref{lem:Initialization}\\ One-step hard threshold bound};
    \node (convergence)[right of=I3, xshift = 2cm,text width = 3cm]{Theorem~\ref{thm: Local Convergence}\\ Local Convergence of Algorithm~\ref{alg:R_omega descent}};
    \node (asymRIP)[above left=of LRIP,xshift = 1cm,text width=2.8cm]{Lemma~\ref{lem: Asymmetric RIP} Asymmetric RIP of $\Ro$};
    \node(trim)[right of=asymRIP,xshift = 4.7cm,text width = 2.5cm]{Lemma~\ref{lem: Trimming result} Trimming bound};
    \node (trimsc)[above of=convergence,yshift = .75cm, text width = 2.5cm]{Theorem~\ref{thm: Resampling Recovery Guarantee} Resampling initialization guarantees};
    \node (1SHT cpxty)[below of=convergence,text width = 2.5cm,yshift = -2.95cm]{Theorem~\ref{thm: Recovery Guarantee I} One-step hard thresholding initialization guarantees};

    \draw [arrow] (Ber) -- (RIP);
    \draw [arrow] (waprop) -- (RIP);
    \draw [arrow] (RIP) -- (LRIP);
    \draw [arrow] (LRIP) -- (step);
    \draw [arrow] (step) -- (I1);
    \draw [arrow] (proj) -- (I2);
    \draw [arrow] (LRIP) -- (I3);

    \draw [arrow] (I1) -- (convergence);
    \draw [arrow] (I2) -- (convergence);
    \draw [arrow] (I3) -- (convergence);
    \draw [arrow] (1SHT) -- (1SHT cpxty);
    \draw [arrow] (convergence) -- (1SHT cpxty);
    \draw [arrow] (RIP) -- (asymRIP);
    \draw [arrow] (trim) -- (trimsc);
    \draw [arrow] (asymRIP) -- (trim);
    \draw [arrow] (convergence) -- (trimsc);
    \draw [arrow] (Ber) -- (1SHT);
\end{tikzpicture}

\caption{This diagram is a schematic of the overall proof of convergence. Arrows indicate how results depend on one another, and how they link together to form the overall proof of convergence. Not every exact dependency is shown in this figure for legibility purposes, instead focusing on the key pieces of the overall flow of the argument.}

\end{figure}
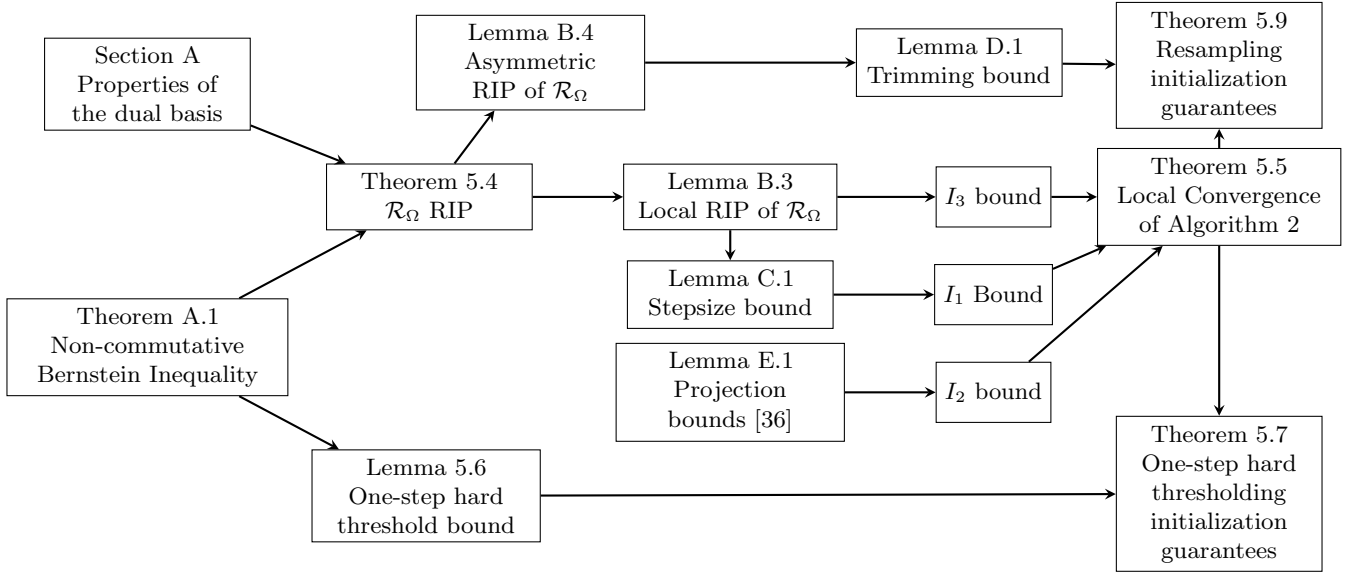

In Theorem~\ref{thm: Local Convergence}, the linear convergence rate required that $\varepsilon_0<\frac{1}{22}$, which is satisfied for $m> 1300\nu^2 r^2 n\log(n)$, a smaller constant factor than that required of the following initialization guarantees. That is to say, the local neighborhood guarantees are stricter in practice than the RIP requirement.

\subsection{Initialization}
Given that the convergence of this algorithm is only local, initialization is important to consider in the context of sample complexity. The simplest initialization, a hard thresholding to $\mathcal{M}_r$ of the measured information, provides a reasonable starting point. The following theorem describes how close such an initialization might be to the ground truth.

\begin{lem}[Initialization via One Step Hard Thresholding]\label{lem:Initialization}
Let $\X\in\Rnn$ be the underlying measured rank-$r$ matrix, and let $\X_0 = \frac{L}{m}\mathcal{H}_r(\Ro(X))$ with $\Omega = m \geq \frac{40}{3}\beta n\log{n}$. It follows that with probability at least $1-2n^{1-\beta}$ that
\begin{equation}\label{eqn:Initialization}
    \left\Vert \X_0-\X\right\Vert_\fro\leq \sqrt{\frac{320 r^2 \mu_1^2 n\log(n)}{3m}}\left\Vert \X\right\Vert.
\end{equation}
\end{lem}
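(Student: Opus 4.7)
The plan is to peel off the hard-thresholding step via the operator-norm form of Eckart-Young-Mirsky, and then control the residual $\Ro(\X)-\tfrac{m}{L}\X$ by applying the matrix Bernstein inequality from Theorem~\ref{thm:Bernstein} to its i.i.d.\ sum decomposition. For the first step, since $\tfrac{m}{L}\X$ has rank at most $r$ and $\mathcal{H}_r(\Ro(\X))$ is a best rank-$r$ approximation to $\Ro(\X)$ in operator norm,
\[
\|\mathcal{H}_r(\Ro(\X))-\Ro(\X)\|\le \left\|\Ro(\X)-\tfrac{m}{L}\X\right\|;
\]
the triangle inequality and rescaling by $L/m$ then yield $\|\X_0-\X\|\le \tfrac{2L}{m}\|\Ro(\X)-\tfrac{m}{L}\X\|$. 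Because $\X_0-\X$ has rank at most $2r$, I pass to the Frobenius norm at the cost of a $\sqrt{2r}$ factor, giving $\|\X_0-\X\|_\fro\le 2\sqrt{2r}\,\tfrac{L}{m}\|\Ro(\X)-\tfrac{m}{L}\X\|$.

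For the second step, write $\Ro(\X)=\sum_{k=1}^m Z_k$ with $Z_k=\langle\X,\bm{w}_{\alphab_k}\rangle\bm{v}_{\alphab_k}$ i.i.d.\ under the uniform-with-replacement model. The dual-basis expansion $\X=\sum_{\alphab}\langle\X,\wa\rangle\va$ immediately gives $\mathbb{E}[Z_k]=\tfrac{1}{L}\X$. For the uniform bound, $|\langle\X,\wa\rangle|=|X_{ii}+X_{jj}-2X_{ij}|\le 4\|\X\|_\infty\le 4\mu_1\sqrt{r/n^2}\,\|\X\|$ by Assumption~\ref{asp: mu_1 assumption}, while the explicit form \eqref{eqn: v_a form} together with $\|\e_i-\tfrac{1}{n}\one\|_2\le 1$ yields $\|\va\|\le 1$; hence $\|Z_k-\mathbb{E}[Z_k]\|\le C\mu_1\sqrt{r}\,\|\X\|/n$ for an absolute constant $C$. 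For the variance, $\mathbb{E}[(Z_k-\mathbb{E}[Z_k])^2]\preceq\mathbb{E}[Z_k^2]=\tfrac{1}{L}\sum_\alphab\langle\X,\wa\rangle^2\,\va\va^\top$, so factoring out the pointwise bound on $|\langle\X,\wa\rangle|^2$ reduces the problem to an operator-norm estimate on $\sum_\alphab \va\va^\top$, controlled via the dual-basis eigenvalue results of Appendix~\ref{appendix: dual basis}. Feeding these two bounds into Theorem~\ref{thm:Bernstein} with deviation $t$ of order $(m/L)\sqrt{\mu_1^2 r n\log n/m}\,\|\X\|$ produces the stated probability $1-2n^{1-\beta}$ once $m\ge\tfrac{40}{3}\beta n\log n$, the threshold arising from forcing the sub-Gaussian variance regime of Bernstein to dominate the sub-exponential tail. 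Combining with the first step delivers exactly $\sqrt{320\mu_1^2 r^2 n\log n/(3m)}\,\|\X\|$ after constant tracking.

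The main obstacle is producing a sharp operator-norm bound on $\sum_\alphab\va\va^\top$ without bleeding a factor of $L=n(n-1)/2$. A crude estimate $\|\sum_\alphab\va\va^\top\|\le\sum_\alphab\|\va\|^2$ wastes a full power of $n$ and would break the sample complexity; instead, one must exploit the rank-two structure $\va=-\tfrac{1}{2}(\bm{a}\bm{b}^\top+\bm{b}\bm{a}^\top)$ and reorganize the sum in terms of centered standard basis vectors $\e_i-\tfrac{1}{n}\one$, reducing to a symmetric combination whose spectrum can be read off via Lemma~\ref{lem: H and H^-1 eigvals} on the dual-basis Gram matrix. The remaining arithmetic to match the precise constant $\sqrt{320/3}$ is tedious but routine once this variance estimate is secured.
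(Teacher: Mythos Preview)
Your proposal is correct and follows essentially the same route as the paper: peel off the hard threshold via Eckart--Young--Mirsky, then apply the matrix Bernstein inequality (Theorem~\ref{thm:Bernstein}) to $\Ro(\X)-\tfrac{m}{L}\X$ using the uniform bound $|\langle\X,\wa\rangle|\le 4\|\X\|_\infty$ and a sharp operator-norm estimate on $\sum_{\alphab}\va^2$, followed by the rank-$2r$ passage to Frobenius norm and Assumption~\ref{asp: mu_1 assumption}. One small correction: the variance step does not come from Lemma~\ref{lem: H and H^-1 eigvals} (which concerns $\H$ and $\H^{-1}$), but from the explicit computation $\sum_{\alphab}\va^2=\tfrac{n^2-2n+2}{4n}\J$ carried out in Lemma~\ref{lem:Form of va^2}; this is precisely the ``reorganize the sum in terms of centered standard basis vectors'' step you describe, and gives $\lambda_{\max}\bigl(\sum_{\alphab}\va^2\bigr)\le n/4$ directly.
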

\begin{proof}
    See Section~\ref{proof: 1SHT}.
\end{proof}

This result shows that with probability at least $1-2n^{1-\beta}$, the assumption of Lemma~\ref{lem: New Local Pseudo proof} is satisfied if 
\begin{equation*}
    m\geq 170\frac{\kappa \mu_1\sqrt{\beta\nu r^3} }{\varepsilon_0} n^{7/4}\log(n),
\end{equation*}
 where $\kappa = \frac{\left\Vert \X\right\Vert}{\sigma_{\mathrm{min}}(\X)}$ is the condition number of $\X$. 

This leads into the following theorem, which is one of the primary results of the work.
\begin{thm}[Recovery Guarantee I]\label{thm: Recovery Guarantee I}
    Suppose $\vert\Omega\vert = m$ with the indices sampled uniformly with replacement. Given an initialization of  $\X_0 = \frac{L}{m}\mathcal{H}_r(\Ro(\X))$ for the rank-$r$, $\nu$-incoherent ground truth matrix $\X$ with condition number $\kappa$, and given  
    \begin{equation*}
       m\geq\max\left\{\frac{8}{3}\frac{\sqrt{\beta\nu^3 r}}{\varepsilon_0},170\kappa \mu_1n^{3/4}\right\}\frac{\sqrt{\beta\nu r^3}}{\varepsilon_0}n\log(n),
    \end{equation*}
with $\beta>1$, then Algorithm~\ref{alg:R_omega descent} linearly converges to the ground truth $\X$ with probability at least $1-10n^{1-\beta}$.
\end{thm}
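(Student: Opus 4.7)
The plan is to combine Lemma~\ref{lem:Initialization} with Theorem~\ref{thm: Local Convergence} via a union bound, where the sample complexity stated in the theorem is precisely what is needed to force the one-step hard thresholding initialization into the attractive basin while simultaneously certifying the RIP and operator-norm hypotheses of the convergence theorem.

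First, I would invoke Lemma~\ref{lem:Initialization} to control the initialization. For $m \geq \tfrac{40}{3}\beta n\log n$, with probability at least $1-2n^{1-\beta}$,
\[
\|\X_0 - \X\|_\fro \leq \sqrt{\frac{320\, r^2 \mu_1^2 n \log(n)}{3m}}\,\|\X\|.
\]
Dividing by $\sigma_{\min}(\X)$ and using the definition $\kappa = \|\X\|/\sigma_{\min}(\X)$, the neighborhood hypothesis \eqref{eqn: neighborhood assumption} of Theorem~\ref{thm: Local Convergence} at $l=0$ would be satisfied provided
\[
\kappa\sqrt{\frac{320\, r^2 \mu_1^2 n\log(n)}{3m}} \;\leq\; \frac{\sqrt{m}\,\varepsilon_0}{16\, n^{5/4}\sqrt{\beta\nu r\log n}}.
\]
Squaring and solving for $m$ yields
\[
m \;\geq\; C\cdot \frac{\kappa\,\mu_1\sqrt{\beta\nu r^3}}{\varepsilon_0}\, n^{7/4}\log(n)
\]
for a modest absolute constant $C$ (the text uses $170$), which is one of the two arguments of the maximum in the theorem's sample-complexity bound.

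Second, I would verify the remaining assumptions \eqref{eqn: RIP assumption}, \eqref{eqn: R_o assumption}, \eqref{eqn: RoPt assumption}, \eqref{eqn: PtRo assumption} of Theorem~\ref{thm: Local Convergence}. The RIP assumption \eqref{eqn: RIP assumption} is exactly the content of Theorem~\ref{thm: R_omega RIP}, which holds with probability at least $1-2n^{1-\beta}$ whenever $m \geq \tfrac{8}{3}\beta(\nu r/\varepsilon_0)^2 n\log n$; this is the second term in the maximum. The three operator-norm assumptions are standard consequences of applying the non-commutative Bernstein inequality (Theorem~\ref{thm:Bernstein}) to the random operators $\Ro$, $\Ro\Pt$, and $\Pt\Ro$, each holding with probability at least $1-2n^{1-\beta}$ under the same sample complexity regime (these are proved in the appendix as part of the overall convergence pipeline).

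Third, I would take a union bound over the five high-probability events: the RIP from Theorem~\ref{thm: R_omega RIP}, the initialization bound from Lemma~\ref{lem:Initialization}, and the three operator-norm bounds on $\Ro$, $\Ro\Pt$, $\Pt\Ro$. Each fails with probability at most $2n^{1-\beta}$, so with probability at least $1-10n^{1-\beta}$ all hypotheses of Theorem~\ref{thm: Local Convergence} hold simultaneously at $\X_0$. Under these conditions Theorem~\ref{thm: Local Convergence} delivers linear convergence $\|\X_{l+1}-\X\|_\fro \leq \delta\|\X_l-\X\|_\fro$ with $\delta = 18\varepsilon_0/(1-4\varepsilon_0)<1$ (which requires $\varepsilon_0 < 1/22$, absorbed into the sample-complexity constants).

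The main obstacle is purely bookkeeping rather than any deep argument: one must be careful that the neighborhood condition, once translated through the initialization bound, becomes a condition on $m^2$ rather than $m$, producing the characteristic $n^{7/4}$ scaling (as opposed to the $n$ scaling one would get from RIP alone). Taking the maximum with the RIP requirement correctly packages the two regimes into the single bound stated in the theorem; the probability accounting is then immediate from the union bound.
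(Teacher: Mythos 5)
Your proposal matches the paper's argument: the paper likewise obtains the $n^{7/4}$ term by forcing the one-step hard-thresholding bound of Lemma~\ref{lem:Initialization} below the neighborhood radius in Theorem~\ref{thm: Local Convergence}, takes the maximum with the RIP sample complexity from Theorem~\ref{thm: R_omega RIP}, and union-bounds the five $2n^{1-\beta}$ failure events to reach $1-10n^{1-\beta}$. No gaps; this is essentially the same proof.
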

\begin{proof}
    This follows from taking the local neighborhood assumption seen in Theorem~\ref{thm: Local Convergence}. By setting the bound produced in Lemma~\ref{lem:Initialization} to be less than required for local convergence, the result follows after some minor algebra and taking the maximum with the sample complexity requirements seen in Theorem~\ref{thm: R_omega RIP}.
\end{proof}
This naive one-step hard threshold initialization can be improved again following a construction in \cite{wei2020guarantees}, using a resampling and trimming algorithm, both defined as follows:

\begin{algorithm}[ht!]
\caption{Riemannian Resampling for Initialization}\label{alg:Resampling Algorithm}
\begin{algorithmic}
\STATE\textbf{Partition} $\Omega$ into $S+1$ equal groups $\Omega_0,\Omega_1,...,\Omega_S$, each of size $\hat{m}$\\{\bf Set} $\Z_0 = \mathcal{H}_r\left(\frac{L}{\hat{m}}\mathcal{R}_{\Omega_0}(\X)\right)$
\FOR {$l = 0,1,...,S-1$}
    \STATE 1. $\hat{\Z}_l = \texttt{trim}(\Z_l)$
    \STATE 2. $\Z_{l+1} = \mathcal{H}_r\left(\hat{\Z}_l + \frac{L}{\hat{m}}\mathcal{P}_{\hat{\mathbb{T}}_l} \mathcal{R}_{\Omega_{l+1}}(\X-\hat{\Z}_l)\right) $
\ENDFOR
\STATE\textbf{Output:} $\X_0 = \Z_S$
\end{algorithmic}
\end{algorithm}

\begin{algorithm}[ht!]
\caption{\tt{trim}}\label{alg:Trimming Algorithm}
\begin{algorithmic}[]
\STATE{\bf Input:} $\Z_l = \U_l\D_l\U_l^\top$
\STATE{\bf Output:} $\widehat{\Z}_l=\A_l\D_l \A_l^\top$, where 
$\A_l^{(i)} = \frac{\U_l^{(i)}}{\left\Vert \U_l^{(i)}\right\Vert_2}\min\left\{ \left\Vert \U_l^{(i)}\right\Vert_2,\sqrt{\frac{\nu r}{n}}\right\}
$
\end{algorithmic}
\end{algorithm}

This trimming algorithm is a projection onto the space of matrices that are 
$\nu$-incoherent with respect to the standard matrix basis, not necessarily with respect to the basis $\{\wa\}_{\ai}$. However as noted previously, the incoherence parameter differs by at most an $\mathcal{O}(1)$ constant, so this is a reasonable surrogate, especially for large $n$.

We can analyze Algorithm~\ref{alg:Resampling Algorithm} and get the following result: 

\begin{lem}[Riemannian Resampling Result]\label{lem:Resampling result}
Let $\X\in\Rnn$ be the measured rank-$r$ matrix with condition number $\kappa$. Let $S$ be the number of partitions specified in Algorithm~\ref{alg:Resampling Algorithm}, and let $\hat{m} = \frac{m}{S+1}$. Then for all $\beta>1$, with probability at least $1-(2+4S)n^{1-\beta}$ the output of Algorithm~\ref{alg:Resampling Algorithm} satisfies
\begin{equation*}
    \left\Vert \X_0 - \X\right\Vert_\fro\leq \left(\frac{5}{6}\right)^S\frac{\sigma_\mathrm{min}(\X)}{256\kappa^2},
\end{equation*}
provided that $\hat{m}\geq \max\left\{ (1.61\times10^5)\nu^2, (7.77\times10^5)\kappa^4\mu_1^2 \right\}\kappa^2 r^2 n\log(n)$.
\end{lem}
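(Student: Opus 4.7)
The plan is to prove Lemma~\ref{lem:Resampling result} by induction on the iterate index $l$, exploiting the independence across the partitions $\Omega_0, \Omega_1, \ldots, \Omega_S$ so that at each step the relevant sampling operator $\mathcal{R}_{\Omega_{l+1}}$ is independent of the iterate $\hat{\Z}_l$. This is the main reason resampling buys us better constants than the one-shot initialization: we avoid conditioning on $\hat{\mathbb{T}}_l$ when invoking concentration.

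For the base case, apply Lemma~\ref{lem:Initialization} to $\Z_0 = \mathcal{H}_r(\tfrac{L}{\hat{m}}\mathcal{R}_{\Omega_0}(\X))$ on the first partition to obtain, with probability at least $1-2n^{1-\beta}$, $\Vert \Z_0 - \X\Vert_\fro \le C\sqrt{r^2\mu_1^2 n\log(n)/\hat{m}}\,\Vert\X\Vert$. Plugging in $\hat{m} \ge 7.77\times 10^5\kappa^4\mu_1^2 r^2 n\log(n)$, this is bounded by $\sigma_{\min}(\X)/(256\kappa^2)$, giving the desired seed bound with a slack factor that will accommodate the contraction.

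The inductive step is where the real work lies. Assuming $\Vert \Z_l - \X\Vert_\fro \le \epsilon_l := (5/6)^l \sigma_{\min}(\X)/(256\kappa^2)$, first invoke the trimming bound of Lemma~\ref{lem: Trimming result} to conclude that $\hat{\Z}_l$ is $\nu$-incoherent with respect to $\{\e_{ij}\}$ (hence, by Remark~\ref{rem: incoherence remark}, also to $\{\wa\}$ and $\{\va\}$ up to constants) while $\Vert \hat{\Z}_l - \X\Vert_\fro \le C_{\mathrm{trim}}\epsilon_l$. Next, write $\W_l = \hat{\Z}_l + \tfrac{L}{\hat{m}}\mathcal{P}_{\hat{\mathbb{T}}_l}\mathcal{R}_{\Omega_{l+1}}(\X-\hat{\Z}_l)$ and use Eckart--Young--Mirsky to bound $\Vert \Z_{l+1}-\X\Vert_\fro \le 2\Vert \W_l - \X\Vert_\fro$. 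Decompose
\begin{equation*}
\W_l - \X = -(\mathcal{I}-\mathcal{P}_{\hat{\mathbb{T}}_l})(\X-\hat{\Z}_l) + \mathcal{P}_{\hat{\mathbb{T}}_l}\bigl(\tfrac{L}{\hat{m}}\mathcal{R}_{\Omega_{l+1}}-\mathcal{I}\bigr)(\X-\hat{\Z}_l),
\end{equation*}
and control the two pieces separately. The first piece is bounded using Lemma~\ref{lem: Projection Bounds} (the curvature bound $\Vert (\mathcal{I}-\Pt)(\X-\hat{\Z}_l)\Vert_\fro \le \Vert \X-\hat{\Z}_l\Vert_\fro^2/\sigma_{\min}(\X)$ applied in the $\hat{\mathbb{T}}_l$ variant), which shrinks by an extra factor of $\epsilon_l/\sigma_{\min}(\X)$ and is negligible given our neighborhood bound. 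The second piece is precisely where the asymmetric RIP of Lemma~\ref{lem: Asymmetric RIP} enters: since $\Omega_{l+1}$ is independent of $\hat{\Z}_l$, we apply RIP with the two subspaces $\hat{\mathbb{T}}_l$ and the rank-$2r$ span of $\X - \hat{\Z}_l$, yielding a bound of the form $\varepsilon_0'\Vert \X-\hat{\Z}_l\Vert_\fro$ with $\varepsilon_0'$ controlled by the sample complexity $\hat{m} \ge 1.61\times 10^5 \nu^2 \kappa^2 r^2 n\log(n)$.

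Combining these two bounds produces $\Vert \Z_{l+1}-\X\Vert_\fro \le (5/6)\epsilon_l = \epsilon_{l+1}$ after calibrating constants, completing the induction. The probability accounting proceeds via union bound: each of the $S$ iteration steps fails with probability at most $4n^{1-\beta}$ (one event each for asymmetric RIP, the norm bound on $\mathcal{R}_{\Omega_{l+1}}$, and the projection-level Bernstein estimates analogous to \eqref{eqn: RoPt assumption}--\eqref{eqn: PtRo assumption}), the base case contributes $2n^{1-\beta}$, giving $(2+4S)n^{1-\beta}$ total. The main obstacle is the inductive contraction: one must carefully verify that the trimming factor $C_{\mathrm{trim}}$ combined with the asymmetric-RIP constant and the quadratic curvature term indeed collapse to a rate strictly below $1$, and tracking the $\kappa$ dependence through the neighborhood bound $\Vert\hat{\Z}_l-\X\Vert_\fro/\sigma_{\min}(\X) \le 1/(256\kappa^2)$ is what forces the $\kappa^4\mu_1^2$ factor in the sample complexity.
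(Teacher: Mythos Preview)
Your outline is essentially correct and follows the same skeleton as the paper's proof: base case via Lemma~\ref{lem:Initialization}, inductive step via trimming plus independence of $\Omega_{l+1}$ from $\hat{\Z}_l$, Eckart--Young--Mirsky, then RIP-type concentration, with union-bound probability accounting. Two small discrepancies are worth flagging.

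First, your two-term decomposition of $\W_l-\X$ is algebraically equivalent to the paper's three-term split $I_4+I_5+I_6$, but the paper further decomposes your second piece by inserting $\mathcal{P}_{\hat{\mathbb{T}}_l}+(\mathcal{I}-\mathcal{P}_{\hat{\mathbb{T}}_l})$ on the input, yielding a tangential part $I_4$ handled by the \emph{standard} RIP (Theorem~\ref{thm: R_omega RIP}) at $\hat{\mathbb{T}}_l$, and a normal part $I_6$. For $I_6$ the paper uses the algebraic identity $(\mathcal{I}-\mathcal{P}_{\hat{\mathbb{T}}_l})(\hat{\Z}_l-\X)=(\mathcal{P}_U-\mathcal{P}_{\hat{U}_l})(\hat{\Z}_l-\X)(\mathcal{I}-\mathcal{P}_{\hat{U}_l})$ and the fact that $\mathcal{P}_{\hat{\mathbb{T}}_l}(\mathcal{P}_U-\mathcal{P}_{\hat{U}_l})(\cdot)(\mathcal{I}-\mathcal{P}_{\hat{U}_l})=0$ to reduce to exactly the operator in Lemma~\ref{lem: Asymmetric RIP}. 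Your proposal to hit the whole second piece with a single asymmetric RIP ``between $\hat{\mathbb{T}}_l$ and the rank-$2r$ span of $\X-\hat{\Z}_l$'' would work, but Lemma~\ref{lem: Asymmetric RIP} as stated is not that general; you would need to verify incoherence of the column space of $\X-\hat{\Z}_l$ (it sits inside $\mathrm{span}(\U,\hat{\U}_l)$, so this is fine) and rerun the Bernstein argument with rank $2r$.

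Second, your probability bookkeeping lists the wrong events: the paper does not invoke $\Vert\mathcal{R}_{\Omega_{l+1}}\Vert$ or the $\Vert\mathcal{R}_\Omega\mathcal{P}_{\mathbb{T}}\Vert$-type bounds here. The $4n^{1-\beta}$ per step comes from $2n^{1-\beta}$ for standard RIP (controlling $I_4$) plus $2n^{1-\beta}$ for asymmetric RIP (controlling $I_6$). The final count $(2+4S)n^{1-\beta}$ is correct.
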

\begin{proof}
    See \ref{proof: Resampling proof}.
\end{proof}

Assuming that $m\geq \beta\nu r n\log(n)$, which is relaxed from the requirement for RIP seen in Theorem~\ref{thm: R_omega RIP}, 
Lemma~\ref{lem:Resampling result} shows that taking
\begin{equation*}
    S\geq 6\log\left(\frac{n^{3/4}}{16\varepsilon_0}\right),
\end{equation*}
the third condition in Lemma~\ref{lem: New Local Pseudo proof} can be satisfied with probability at least $1-\left(6+24\log\left(\frac{n^{3/4}}{16\varepsilon_0}\right)\right)n^{1-\beta}$ for a large enough sample complexity. This leads to the final recovery guarantee, attenuating the $n$ dependence in the sample complexity.
\begin{thm}\label{thm: Resampling Recovery Guarantee}
    Let $\X\in\Rnn$ be the measured rank-$r$, $\nu$-incoherent matrix with condition number $\kappa$, and suppose that $\vert\Omega\vert = m$ is a set of sampled indices from $\univ$ uniformly at random with replacement. Let $\X_0$ be the output of Algorithm~\ref{alg:Resampling Algorithm}. Then for any $\beta>1$, the iterates of Algorithm~\ref{alg:R_omega descent} linearly converge to $\X$ with probability at least $1-\left(6+24\log\left(\frac{n^{3/4}}{16\varepsilon_0}\right)\right)n^{1-\beta}$  provided that
    \begin{equation*}
        m \geq \max\left\{\frac{8\nu^2}{3\varepsilon_0^2},(2.3\times10^5)\nu^2\log\left(\frac{n^{3/4}}{16\varepsilon_0}\right),(1.2\times10^6)\kappa^4\mu_1^2\log\left(\frac{n^{3/4}}{16\varepsilon_0}\right)  \right\}\kappa^2 r^2 n\log(n).
    \end{equation*}
\end{thm}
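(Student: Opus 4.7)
The plan is to proceed in direct analogy with the proof of Theorem~\ref{thm: Recovery Guarantee I}, but replacing the one-step hard-thresholding initialization error bound in Lemma~\ref{lem:Initialization} with the much tighter geometric-decay bound given by Lemma~\ref{lem:Resampling result}. Concretely, I would start from the local convergence statement in Theorem~\ref{thm: Local Convergence}, whose binding requirement in this regime is the neighborhood condition \eqref{eqn: neighborhood assumption}, namely $\Vert \X_0-\X\Vert_\fro/\sigma_\text{min}(\X) \le \sqrt{m}\varepsilon_0/(16 n^{5/4}\sqrt{\beta\nu r\log n})$. The task therefore reduces to choosing the number of resampling rounds $S$ so that the output of Algorithm~\ref{alg:Resampling Algorithm} satisfies this inequality.

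First I would apply Lemma~\ref{lem:Resampling result} with $\hat{m} = m/(S+1)$, which yields $\Vert \X_0 - \X\Vert_\fro \le (5/6)^S \sigma_\text{min}(\X)/(256\kappa^2)$ on an event of probability at least $1-(2+4S)n^{1-\beta}$, provided $\hat m$ is at least a constant multiple of $\kappa^2 r^2 n\log n$ with the explicit dependence on $\nu$, $\kappa$, and $\mu_1$ recorded in the lemma. Substituting into \eqref{eqn: neighborhood assumption} and dropping the favorable $\kappa^2$ factor in the denominator, it suffices to pick $S$ so that $(5/6)^S \le n^{-3/4}\cdot 16\varepsilon_0$ (modulo a $\sqrt{m/(\beta\nu r \log n)}$ factor that is $\ge 1$ under the RIP sample complexity); the choice $S \ge 6\log(n^{3/4}/(16\varepsilon_0))$ flagged in the discussion after Lemma~\ref{lem:Resampling result} accomplishes this because $\log(6/5) > 1/6$.

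Next I would bookkeep the probability and sample complexity. The overall failure event is the union of: (i) the four high-probability events \eqref{eqn: RIP assumption}, \eqref{eqn: R_o assumption}, \eqref{eqn: RoPt assumption}, \eqref{eqn: PtRo assumption} required to invoke Theorem~\ref{thm: Local Convergence} on the full sample, each occurring with probability $\ge 1 - cn^{1-\beta}$ via Theorem~\ref{thm: R_omega RIP} and the companion Bernstein estimates; and (ii) the $(2+4S)n^{1-\beta}$ failure probability from Lemma~\ref{lem:Resampling result}. With $S = \lceil 6\log(n^{3/4}/(16\varepsilon_0))\rceil$, the union bound produces the stated $1-(6+24\log(n^{3/4}/(16\varepsilon_0)))n^{1-\beta}$ success rate. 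For the sample complexity, I would take the max of the three required lower bounds on $\hat m$ from Lemma~\ref{lem:Resampling result} and multiply by $S+1 = \mathcal{O}(\log(n^{3/4}/\varepsilon_0))$, then take the max with the RIP requirement $m \ge (8/3)\beta(\nu r/\varepsilon_0)^2 n\log n$ from Theorem~\ref{thm: R_omega RIP}. Consolidating and tracking the explicit constants in Lemma~\ref{lem:Resampling result} reproduces the three-way maximum stated in the theorem.

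The routine but slightly delicate step will be the constant chasing: the resampling error bound is stated with $(5/6)^S$ and a $1/(256\kappa^2)$ prefactor, and the neighborhood width in \eqref{eqn: neighborhood assumption} scales like $\sqrt{m}/(n^{5/4}\sqrt{\beta\nu r\log n})$, so one has to verify that under the minimum $m$ hypothesized in the theorem the $\sqrt{m/(n\beta\nu r\log n)}$ factor in the neighborhood bound is indeed at least $n^{1/4}/16\varepsilon_0$ times a constant, so that the simpler sufficient condition $(5/6)^S \le 16\varepsilon_0/n^{3/4}$ suffices. Beyond this arithmetic, no new analytic ingredient is required: the heavy lifting is already in Theorem~\ref{thm: Local Convergence} for the iteration phase and in Lemma~\ref{lem:Resampling result} for the warm start.
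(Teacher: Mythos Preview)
Your proposal is correct and follows essentially the same approach as the paper. The paper's proof is a brief sketch stating that the result follows from combining the local neighborhood condition of Theorem~\ref{thm: Local Convergence} with the sample complexity bounds from Lemma~\ref{lem:Resampling result} and Theorem~\ref{thm: R_omega RIP}; your write-up fills in exactly these steps, including the same choice $S \geq 6\log(n^{3/4}/(16\varepsilon_0))$ that the paper records in the discussion immediately preceding Theorem~\ref{thm: Resampling Recovery Guarantee}.
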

\begin{proof}
    As in Theorem~\ref{thm: Recovery Guarantee I}, this proof follows from the local neighborhood conditions of
    Theorem~\ref{thm: Local Convergence}, combined with the sample complexity results from Lemma~\ref{lem:Resampling result} and Theorem~\ref{thm: R_omega RIP}. The constants are not optimized, and could be further improved.
\end{proof}

\section{Numerical Results} \label{sec: Numerics}
In this section, we test the proposed algorithms on synthetic and real data.
\subsection{Synthetic Data Experiments}
To test Algorithms~\ref{alg:F_omega descent} and \ref{alg:R_omega descent}, various two and three dimensional datasets were used, and are referred to in Table~\ref{tab: synthetic IPM means table} with their corresponding sizes. The goal of Algorithms~\ref{alg:F_omega descent} and~\ref{alg:R_omega descent} is to recover the full set of points $\Pb$ up to orthogonal transformation by sampling the entries above the diagonal of $\D$ uniformly with replacement, with a total of $\gamma L$ entries chosen for $\gamma\in[0,1]$. Algorithm~\ref{alg:R_omega descent} reconstructs the Gram matrix $\X = \P^\top\P$, from which $\P$ can be recovered. The comparison referenced in Table~\ref{tab: synthetic IPM means table} is the relative error between the recovered matrix $\X_\mathrm{rev}$ and the ground truth matrix $\X$ in Frobenius norm. Each run was terminated at either 1000 iterations or when a relative Frobenius norm difference between iterates of $10^{-5}$ was achieved.

\begin{table}[ht!]
\vspace{-0.15in}
\centering
\caption{Relative recovery error $\left\Vert \X-\X_\mathrm{rev}\right\Vert_\fro/\left\Vert \X\right\Vert_\fro$ between the recovered Gram matrix and the true Gram matrix averaged over 25 trials using Algorithms~\ref{alg:F_omega descent}, \ref{alg:R_omega descent}, and the non-convex algorithm in \cite{tasissa2018exact}. \vspace{0.05in}} \label{tab: synthetic IPM means table}
\begin{tabular}{||l|c|c|c |c|c| c||} 
\hline
\diagbox[width=10em]{Dataset}{$\gamma$}&
   {10\%}&  {7\%}& {5\%} & {3\%} & {2\%} & {1\%}  \\
\hline
\hline
\multicolumn{7}{||c||}{Algorithm~\ref{alg:F_omega descent}}\\
\hline
 Sphere (3D, $n=1002$)& 2.92e-05 &   3.69e-05 &   6.01e-05 &  1.28e-04 &   6.82e-03  &  9.11e-01 \\
\hline
Cow (3D, $n=2601$)& 3.38e-05&  4.06e-05   & 4.89e-05   & 7.61e-05 &   1.07e-04  &  8.60e-03 \\
\hline
Swiss Roll (3D, $n=2048$)& 2.92e-05& 3.97e-05& 5.06-05& 7.71e-05 & 1.21e-04&5.52e-02\\
\hline
 U.S. Cities (2D, $n=2920$)&  4.10e-05  &  4.67e-05  &  2.01e-03 & 6.94e-03 &   1.63e-02  &  6.08e-02 \\
\hline
\hline
\multicolumn{7}{||c||}{Algorithm~\ref{alg:R_omega descent}}\\
\hline
 Sphere (3D, $n=1002$)& 1.53e-05  &  3.86e-05  &  2.11e-04  &  7.88e-02  &  2.29e-01   & 1.78e+00\\
\hline
Cow (3D, $n=2601$)& 4.15e-02  &  3.40e-02 &   6.28e-02  &  1.76e-01   & 3.60e+00 &  7.73e-01 \\
\hline
Swiss Roll (3D, $n=2048$)& 8.34e-06 &  1.46e-05  &  3.00e-05 &  1.52e-03 &  2.82e-01 &  9.73e-01\\
\hline
 U.S. Cities (2D, $n=2920$)& 3.02e-02  &  1.23e-01  &  1.46e-01  &  1.86e-01&   3.13e-01 &  8.35e-01 \\
\hline
\hline
\multicolumn{7}{||c||}{Non-convex algorithm in \cite{tasissa2018exact}}\\
\hline
 Sphere (3D, $n=1002$)& 6.14e-06 &   9.86e-06 &   1.36e-05&  3.04e-05&  6.18e-05  &  1.00e-01 \\
\hline
Cow (3D, $n=2601$)& 5.73e-06&  7.66e-06   & 1.06e-05   & 1.65e-05 &   2.11e-05 &  4.46e-05 \\
\hline
Swiss Roll (3D, $n=2048$)& 2.19e-06& 1.22e-06&1.01e-06& 1.87e-06 & 1.06e-06& 3.34e-05\\
\hline
 U.S. Cities (2D, $n=2920$)&  4.09e-07  & 6.09e-07 &  8.19e-07  &  1.32e-06 &   2.30e-06  &  4.69e-06 \\
 \hline
 \end{tabular}
\end{table}

In addition to the relative error comparison between the recovered Gram matrix and the ground truth Gram matrix, we compute the root mean square error (RMSE), defined as $\sqrt{\frac{1}{n}\Vert\P_\mathrm{rev}-\P\Vert_\fro^2}$ between the recovered point cloud $\P_\mathrm{rev}$ following a Procrustes realignment with the ground truth $\P$, under the same experimental parameters as with the Gram matrix recovery. The results are compiled in Table~\ref{tab: synthetic rmse table}.

\begin{table}[!ht]
\vspace{-0.15in}
\centering
\caption{RMSE between $\P_\mathrm{rev}$ and $\P$ averaged over 25 trials using Algorithms~\ref{alg:F_omega descent}, \ref{alg:R_omega descent}, and the non-convex algorithm in \cite{tasissa2018exact}. \vspace{0.05in}} \label{tab: synthetic rmse table}
\begin{tabular}{||l|c|c|c |c|c| c||} 
\hline
\diagbox[width=10em]{Dataset}{$\gamma$}&
   {10\%}&  {7\%}& {5\%} & {3\%} & {2\%} & {1\%}  \\
\hline
\hline
\multicolumn{7}{||c||}{Algorithm~\ref{alg:F_omega descent}}\\
\hline
 Sphere (3D, $n=1002$)& 2.06e-05 &   2.61e-05  &  4.24e-05  &  9.07e-05  &  4.82e-03&   7.14e-01\\
\hline
Cow (3D, $n=2601$)& 3.98e-05  & 4.89e-05  &  6.08e-05  &  9.25e-05  &  1.31e-04  &  1.24e-02 \\
\hline
Swiss Roll (3D, $n=2048$)& 2.46e-04  &  3.36e-04  &  4.26e-04 &  6.48e-04  &  1.13e-03  &  4.92e-01\\
\hline
 U.S. Cities (2D, $n=2920$)&7.12e-04  &  8.13e-04  &  5.39e-02  &  1.86e-01 &   4.37e-01  &  1.70e+00 \\
 \hline
\hline
\multicolumn{7}{||c||}{Algorithm~\ref{alg:R_omega descent}}\\
\hline
 Sphere (3D, $n=1002$)& 1.08e-05  &  2.72e-05  & 1.49e-04  &  5.37e-02  &  1.42e-02 &   1.24e+00\\
\hline
Cow (3D, $n=2601$)& 6.67e-02  &  7.81e-02  &  1.38e-01  &  2.49e-01  &  5.87e-01  &  5.52e-01 \\
\hline
Swiss Roll (3D, $n=2048$)& 6.91e-05  &  1.21e-04  &  2.51e-04  &  1.24e-02  &  1.86e+00 & 1.34e+01\\
\hline
 U.S. Cities (2D, $n=2920$)& 9.94e-01  &  3.89e+00  &  5.10e+00  &  6.25e+00 &   7.77e+00 &  1.13e+01 \\
\hline
\hline
\multicolumn{7}{||c||}{Non-convex algorithm in \cite{tasissa2018exact}}\\
\hline
 Sphere (3D, $n=1002$)& 4.29e-06  &  6.94e-06  & 9.61e-06  &  2.14e-05  &  4.37e-05 &   7.10e-02\\
\hline
Cow (3D, $n=2601$)& 8.86e-06  &  1.19e-05  &  1.75e-05  &  2.70e-05  &  3.47e-05  &  7.08e-05 \\
\hline
Swiss Roll (3D, $n=2048$)& 2.02e-05  &  1.10e-05  &  8.90e-06  &  1.66e-05  &  9.32e-06 & 2.96e-04\\
\hline
 U.S. Cities (2D, $n=2920$)& 8.01e-06 &  1.40e-05  &  1.87e-05 & 3.11e-05&   5.18e-05 &  1.13e-04 \\
\hline
 \end{tabular}
\end{table}

As indicated by these experiments, Algorithm~\ref{alg:F_omega descent} more reliably reconstructs the underlying datasets from distance samples than Algorithm~\ref{alg:R_omega descent}, but both are outperformed by the non-convex algorithm in \cite{tasissa2018exact}. However, with the non-convex algorithm in \cite{tasissa2018exact} there is little hope of ever conducting local convergence analysis of this algorithm, whereas Algorithm~\ref{alg:R_omega descent} has been proven to exhibit local convergence. It remains to be seen if Algorithm~\ref{alg:F_omega descent} will be provably locally convergent, as determining a high-probability bound on $\left\Vert\Pt\Fo\Pt-c\Pt\right\Vert$ for some $c>0$ has proven challenging. As such, more practical utility lies in Algorithm~\ref{alg:F_omega descent} and \cite{tasissa2018exact} than in Algorithm~\ref{alg:R_omega descent}, but the strong theoretical results provided by Algorithm~\ref{alg:R_omega descent} will guide future work for convergence analysis of Algorithm~\ref{alg:F_omega descent} and other self-adjoint surrogates for $\Ro$.

\subsection{Experiments on Real Data}
Additional numerical experiments have been conducted on proteins, a common application of EDG, following the structured sampling method seen in \cite{lichtenberg_structured}. The sampling method on $n$ points has three different classes of points: $m$ pseudoanchors, 1 central anchor, and $n-m-1$ mobile nodes. The central anchor, corresponding to a row/column of the squared distance matrix, is fully known; that is, the distance between the central node and all $n$ points is revealed in the masked square distance matrix. The pairwise distances between the pseudoanchors are sampled with a Bernoulli probability $\gamma\in[0,1]$ for each entry, and each mobile node is connected uniformly at random to $k$ of the pseudoanchors. None of the distances between mobile nodes are known. The Gram matrix $\X$ is then reordered into the following pattern: the first $m$ rows/columns are the rows/columns corresponding to the pseudoanchors, the $m+1$-th row/column corresponds to the central anchor, and the remaining $n-m-1$ columns/rows correspond to the mobile nodes. This is illustrated in the figure below:
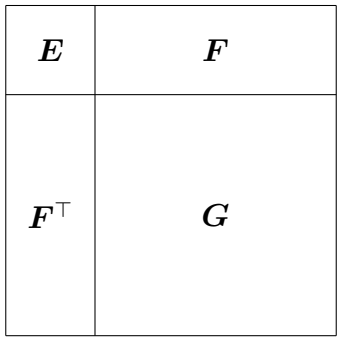
\begin{figure}[ht!]
    \centering
\begin{tikzpicture}[x=1pt, y=1pt, yscale=-1]
\draw (0,0) -- (125,0);
\draw (125,0) -- (125,125) ;
\draw (125,125) -- (0,125) ;
\draw (0,125) -- (0,0) ;
\draw (33.75,125) -- (33.75,0) ;
\draw (0,33.75) -- (125,33.75) ;
 \filldraw (16.875,16.875) circle (0pt)
                         node[font=\large]{$\E$};
 \filldraw (16.875,79.375) circle (0pt)
                         node[font=\large]{$\bm{F}^\top$};
\filldraw (79.375,16.875) circle (0pt)
                         node[font=\large]{$\bm{F}$};
\filldraw (79.375,79.375) circle (0pt)
                         node[font=\large]{$\bm{G}$};
\end{tikzpicture}
\caption{Structured sampling method for distance matrices proposed in \cite{lichtenberg_structured} for the experiments.}
\end{figure}

More specifically, $\E$ is sampled according to an entry-wise Bernoulli distribution with parameter $\gamma\in [0,1]$, and in each column of $\bm{F}$, $k$ entries are sampled uniformly without replacement. $\G$ is not known at all for this experiment.

Three proteins were investigated in this experiment, identified as \texttt{1PTQ}, \texttt{1AX8}, and \texttt{1UBQ}. These proteins were downloaded from the Protein Data Bank \cite{PDB}. For all three proteins, we select $m = 20$ anchors and space them uniformly throughout $[1,n]$ where $n$ is the number of atoms in the protein. In practice, domain knowledge allows for better anchor selection which can improve algorithmic performance. For each of these proteins, we set the column sample number $k\in[6,9]$, and we select the $\E$ block Bernoulli rate $\gamma\in[.1,.5]$. We additionally run this experiment with an assumed ground truth rank for $\X$ of both 3 and 4, as overparameterization has been shown previously to improve numerical performance for sensor network localization problems\cite{tang2023,lei2023}.

We first show a table indicating that increasing the rank of $\mfr$ from 3 to 4 improves the numerical performance in this structured sampling setting using Algorithm~\ref{alg:F_omega descent}. For this experiment, we will look at the protein \verb|1PTQ| ($n = 402$), and we fix the $\E$ block rate $\gamma = .3$, seen in Table~\ref{tab: Rank table}. This experiment and all others following were averaged over $25$ trials, each lasting for $10000$ iterations or until a relative difference in Frobenius norm between iterates of $10^{-5}$ was achieved.

\begin{table}[ht!]
\vspace{-0.15in}
\centering
\caption{RMSE between $\P_\mathrm{rev}$ and $\P$ averaged over $25$ trials for the protein \texttt{1PTQ} using Algorithm~\ref{alg:F_omega descent}, each trial run for $10000$ iterations or until a $10^{-5}$ relative difference in Frobenius norm is achieved.\vspace{0.05in}} \label{tab: Rank table}
\begin{tabular}{|l|l|l|l ||l  |l |l  |l|} 
\hline
   {Column samples}&  {E block rate} & {Rank}& {RMSE}& {Column samples}&  {E block rate} & {Rank}& {RMSE} \\
\hline
    6&                    .3 &           3  &  2.56&   6&                  .3 &          4  &   0.324\\
\hline
    7&                  .3 &           3  &  1.51&   7&                  .3 &          4  &   0.211\\
\hline
    8&                 .3 &           3  &  0.915&   8&                  .3 &          4  &   0.134\\
\hline
    9&                     .3 &           3  &  0.712&   9&                  .3 &          4  &   0.102\\
\hline
 \end{tabular}
\end{table}

 This experiment is in line with existing literature on overparameterization aiding reconstruction, as this provides clear indication that the reconstruction of the ground truth improves with higher rank. As such, we will set the rank of $\mfr$ to 4 for the remainder of the experiments. Next, we test to see if the $\E$ block rate parameter $\gamma$ exhibits a substantial performance effect on the final RMSE, seen in Table~\ref{tab: E block RMSE table}.

\begin{table}[ht!]
\vspace{-0.15in}
\centering
\caption{RMSE between $\P_\mathrm{rev}$ and $\P$ averaged over 25 trials for the protein \texttt{1PTQ} using Algorithm~\ref{alg:F_omega descent}, each trial run for 10000 iterations or until a $10^{-5}$ relative difference in Frobenius norm is achieved.\vspace{0.05in}} \label{tab: E block RMSE table}
\begin{tabular}{|l|l|l||l|l|l|} 
\hline
   {Column samples}&  {$\E$ block rate} & {RMSE}& {Column samples}&  {$\E$ block rate}& {RMSE} \\
\hline
6&  .1 &   0.347 & 8&   .1 &    0.142 \\
\hline
6& .2  &  0.379 & 8&   .2&  0.143\\
\hline
6&  .3&    0.324 & 8&   .3&     0.134 \\
\hline
6& .4&    0.326 & 8&   .4&    0.134\\
\hline
6&  .5&    0.329& 8&   .5&    0.121\\
\hline
\hline
7& .1 &     0.209& 9&   .1 &   0.107 \\
\hline
7&  .2&    0.191& 9&   .2&    0.104\\
\hline
7& .3&    0.211 & 9&   .3&    0.102 \\
\hline
7&  .4&    0.195& 9&   .4&    0.0954\\
\hline
7& .5&    0.200& 9&   .5&    0.0987\\\hline
 \end{tabular}
\end{table}

From the experiment in Table~\ref{tab: E block RMSE table}, increasing the $\E$ block rate does not greatly improve in the final RMSE following reconstruction. From a total number of samples perspective, this is not surprising, as for $m = 20$, the expected number of samples in the $\E$ block for $\gamma  = 0.1$ is 38, and for $\gamma = 0.5$ the expected number is 190. Given $n=402$ for this dataset, $L = 80601$, and the relative difference in total number of visible samples is less than two tenths of a percent. Since this parameter does not demonstrate a strong effect on convergence of Algorithm~\ref{alg:F_omega descent}, we will now just show the remaining experiments for the proteins \texttt{1AX8} and \texttt{1UBQ} with the $\E$ block rate $\gamma = 0.3$, seen in Table~\ref{tab: Full F_omega RMSE table}.

\begin{table}[ht!]
\centering
\caption{RMSE between $\P_\mathrm{rev}$ and $\P$ averaged over $25$ trials for the proteins \texttt{1PTQ}, \texttt{1AX8}, \texttt{1UBQ} using Algorithm~\ref{alg:F_omega descent}, each trial run for $10000$ iterations or until a $10^{-5}$ relative difference in Frobenius norm is achieved.\vspace{0.05in}} \label{tab: Full F_omega RMSE table}
\begin{tabular}{|ll|ll|ll|}
\hline
\multicolumn{2}{|c|}{\texttt{1PTQ} $(n =402)$}                                       & \multicolumn{2}{c|}{\texttt{1AX8} $(n =1003)$}                                       & \multicolumn{2}{c|}{\texttt{1UBQ} $(n =660)$}                                       \\ \hline
\multicolumn{1}{|c|}{Column Samples} & \multicolumn{1}{c|}{RMSE} & \multicolumn{1}{c|}{Column Samples} & \multicolumn{1}{c|}{RMSE} & \multicolumn{1}{c|}{Column Samples} & \multicolumn{1}{c|}{RMSE} \\ \hline
\multicolumn{1}{|l|}{6}              &          0.324& \multicolumn{1}{l|}{6}              &                           0.915& \multicolumn{1}{l|}{6}              &                           0.409\\ \hline
\multicolumn{1}{|l|}{7}              &                           0.211& \multicolumn{1}{l|}{7}              &                           0.435& \multicolumn{1}{l|}{7}              &                           0.266\\ \hline
\multicolumn{1}{|l|}{8}              &                           0.134& \multicolumn{1}{l|}{8}              &                           0.269& \multicolumn{1}{l|}{8}              &                           0.205\\ \hline
\multicolumn{1}{|l|}{9}              &                           0.102& \multicolumn{1}{l|}{9}              &                           0.201& \multicolumn{1}{l|}{9}              &                           0.177\\ \hline
\end{tabular}
\end{table}
These experiments indicate strong reconstruction ability with Algorithm $\ref{alg:F_omega descent}$ in this structured sampling setting. The dependence on the number of column samples in RMSE following reconstruction is visible as well across the experiments. This is expected from the perspective of total entries viewed in the underlying matrix, as the majority of the accessible connectivity information is stored in the $\bm{F}$ block in this sampling setup.

\begin{figure*}[t!]
    \centering
    \subfloat{\includegraphics[width=2.5in]{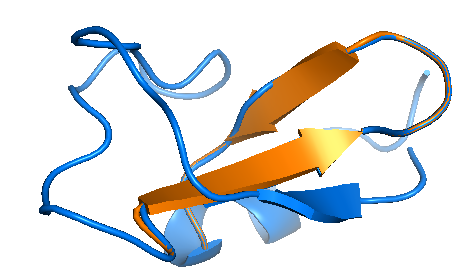}
        \label{fig:1ptq_comparisons}}
    \hfil
    \subfloat{\includegraphics[width=2.5in]{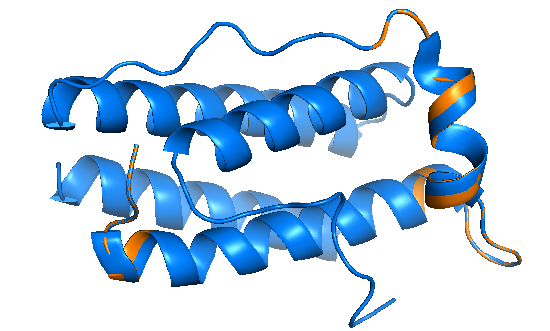}
       
        \label{fig:1ax8_comparisons}}
    \caption{Target structure (in blue) and numerically estimated structure (in orange) following $100000$ iterations of Algorithm~\ref{alg:F_omega descent}. (Left) Target structure \texttt{1AX8}, $\gamma = 0.3$ and $k=6$ (RMSE = 0.014). (Right): Target structure \texttt{1AX8}, $\gamma = 0.3$ and $k=6$ (RMSE = 0.06).}
\end{figure*}

\section{Conclusion and Future Work}\label{sec: Conclusion}

In this work we proposed a novel approach for solving the EDG problem using a matrix completion approach on the manifold of rank-$r$ matrices in Algorithm~\ref{alg:R_omega descent}. We derived local linear convergence guarantees for this non-convex Riemannian gradient-like algorithm, and with this approach we provided two provably convergent initialization techniques when considering uniformly sampled distances. To the authors' knowledge, this is the first work to provide such initialization methods non-convex approaches to the EDG problem. The convergence analysis of this algorithm was predicated on understanding properties of a non-self-adjoint sampling operator, which required novel analysis of EDG-specific bases. We provided numerical results for this method to underline its efficacy in the high-sampling regime for the EDG problem. In addition to the provably convergent Algorithm~\ref{alg:R_omega descent}, we provided an additional algorithm, Algorithm~\ref{alg:F_omega descent}, that is a true first-order method on the manifold of rank-$r$ matrices. This algorithm, although currently lacking in convergence guarantees, exhibited better numerical performance than the provably convergent one, performing nearly as well as some existing methods. Finally, we numerically investigated a structured sampling method relevant to the sensor network localization and protein structure problems, and studied how Algorithm~\ref{alg:F_omega descent} performed numerically in this setting on real-world data. We showed that it exhibited strong reconstruction performance in this new sampling framework, opening the door to future investigation.

One future goal will be a full characterization of the convergence of Algorithm~\ref{alg:F_omega descent}, as this remains an open question. This will be important to investigate due to its stronger numerical performance. We are also interested in reconstruction of matrices expanded in more general non-orthogonal bases, and developing guarantees based on linear-algebraic properties similar to those investigated in this work. Additionally, this work relied on a uniform sampling with replacement model. Oftentimes, real world models for EDG or sensor network localization rely on different sampling models, such as nearest neighbor sampling. We are interested in seeing how we can extend this work and gain theoretical guarantees in the direction of non-uniform sampling models, alongside motivating other algorithmic developments.

\section{Acknowledgment}
HanQin Cai acknowledges partial support from the National Science Foundation through grant DMS-2304489. 
Abiy Tasissa and Chandler Smith acknowledge partial support from the National Science Foundation through grant DMS-2208392. 

\bibliographystyle{IEEEtran} 
\bibliography{IEEEabrv,Arxiv_Document}

\appendices
\section{Properties of the dual bases and Non-Commutative Bernstein Inequality}\label{appendix: dual basis}
This section of the appendix details technical results about the specific dual bases, $\{\wa\}_{\ai}$ and $\{\va\}_{\ai}$. These are needed to prove various technical lemmas throughout the work, but are particularly important in the proof of Theorem~\ref{thm: R_omega RIP}. Additionally, we provide a variant of the non-commutative Bernstein inequality leveraged throughout this work.

\begin{thm}[Operator Bernstein Inequality\cite{recht2011simpler}]\label{thm:Bernstein}
Let $\X_i$, $i=1,...,m$ be i.i.d, zero-mean, matrix-valued random variables, and let $\rho_i^2\geq \max\left\{\mathbb{E}\left(\X_i\X_i^\star)\right),\mathbb{E}\left(\X_i^\star\X_i)\right)\right\}$. Assume there exists a $c\in\bb{R}$ such that $\Vert \X_i\Vert\leq c$ almost surely. Then for $t<\sum_{i=1}^m \frac{\rho_i}{c}$,
\[
\bb{P}\left(\left\Vert\sum_{i=1}^m \X_i\right\Vert>t\right)\leq 2n \exp\left(-\frac{t^2/2}{\sum_{i=1}^m \rho_i^2 + ct/3}\right).
\]
If we assume that $\rho_1^2 = ... = \rho_m^2 = V_0$ and let $V = mV_0$, then for $t<\frac{V}{c}$ this simplifies to
\begin{equation}\label{eqn:Bernstein}
    \bb{P}\left(\left\Vert\sum_{i=1}^m \X_i\right\Vert>t\right)\leq 2n \exp\left(-\frac{3t^2}{8V}\right).
\end{equation}
\end{thm}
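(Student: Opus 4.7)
The plan is to apply the matrix Laplace transform method combined with Lieb's concavity theorem, the now-standard Tropp-style route to matrix Bernstein bounds. First I would reduce to the self-adjoint case by Hermitian dilation: set $\Y_i=\bigl(\begin{smallmatrix}\bm{0}&\X_i\\\X_i^\star&\bm{0}\end{smallmatrix}\bigr)$. A short calculation shows $\|\Y_i\|=\|\X_i\|$, $\mathbb{E}\Y_i=\bm{0}$, and $\mathbb{E}\Y_i^2$ is block-diagonal with blocks $\mathbb{E}\X_i\X_i^\star$ and $\mathbb{E}\X_i^\star\X_i$. Hence the hypotheses on $c$ and $\rho_i^2$ transfer directly and $\|\sum_i\X_i\|=\lambda_{\max}(\sum_i\Y_i)$, reducing everything to a self-adjoint tail bound on $S=\sum_i\Y_i$ of ambient dimension at most $2n$.

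Next, Markov's inequality applied to $e^{\theta\lambda_{\max}(S)}\le\Tr\,e^{\theta S}$ produces the matrix Chernoff-style inequality
\begin{equation*}
\mathbb{P}(\lambda_{\max}(S)>t)\le\inf_{\theta>0}e^{-\theta t}\,\mathbb{E}\Tr\exp(\theta S).
\end{equation*}
The central task, and the step I expect to be the main obstacle, is controlling $\mathbb{E}\Tr\exp(\theta S)$ despite the non-commutativity of the $\Y_i$. This is where Lieb's concavity theorem enters: $\A\mapsto\Tr\exp(\H+\log\A)$ is concave on positive-definite $\A$. Iteratively conditioning on $\Y_1,\ldots,\Y_{k-1}$ and applying Jensen's inequality through this concavity yields the subadditivity of matrix cumulant-generating functions,
\begin{equation*}
\mathbb{E}\Tr\exp\Bigl(\theta\textstyle\sum_i\Y_i\Bigr)\le\Tr\exp\Bigl(\textstyle\sum_i\log\mathbb{E}\,e^{\theta\Y_i}\Bigr).
\end{equation*}

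With the matrix MGF decoupled across the $i$'s, the remaining work is a single-summand MGF bound. Because $\|\Y_i\|\le c$ and $\mathbb{E}\Y_i=\bm{0}$, a Taylor expansion together with the moment domination $\mathbb{E}\Y_i^k\preceq c^{k-2}\mathbb{E}\Y_i^2$ and the elementary inequality $k!\ge 2\cdot 3^{k-2}$ gives, for $0<\theta<3/c$,
\begin{equation*}
\mathbb{E}\,e^{\theta\Y_i}\preceq\I+\frac{\theta^2\,\mathbb{E}\Y_i^2}{2(1-c\theta/3)}\preceq\exp\Bigl(\frac{\theta^2\,\mathbb{E}\Y_i^2}{2(1-c\theta/3)}\Bigr),
\end{equation*}
where the last step uses $\log(\I+\A)\preceq\A$ in the Loewner order. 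Combining with the subadditivity bound, using $\|\sum_i\mathbb{E}\Y_i^2\|\le V$, and pulling the dimension out of the trace produces the scalar tail estimate
\begin{equation*}
\mathbb{P}(\|\textstyle\sum_i\X_i\|>t)\le 2n\,\exp\Bigl(-\theta t+\tfrac{\theta^2 V}{2(1-c\theta/3)}\Bigr).
\end{equation*}
Finally I would optimize in $\theta$; the choice $\theta=t/(V+ct/3)$ (which respects the stated range $t<V/c$ so the denominator stays strictly positive) yields the advertised exponent $-3t^2/(8V)$ after elementary algebra, giving \eqref{eqn:Bernstein}. The i.i.d.\ hypothesis enters only at the last step to write $V=mV_0$.
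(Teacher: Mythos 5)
Your proof is correct, but note that the paper itself offers no proof of this statement: Theorem~\ref{thm:Bernstein} is imported verbatim from \cite{recht2011simpler} as a known tool, so there is no in-paper argument to compare against. Judged on its own, your derivation is the modern Tropp-style route --- Hermitian dilation to reduce to the self-adjoint case (which is where the $2n$ prefactor and the $\max$ over the two second-moment matrices come from), the matrix Laplace transform, subadditivity of the matrix cumulant generating function via Lieb's concavity theorem, the Bernstein moment-generating-function bound $\mathbb{E}\,e^{\theta\Y_i}\preceq\exp\bigl(\tfrac{\theta^2}{2(1-c\theta/3)}\mathbb{E}\Y_i^2\bigr)$ for $0<\theta<3/c$, and the optimization $\theta=t/(V+ct/3)$ giving exponent $-\tfrac{t^2/2}{V+ct/3}$, which the restriction $t<V/c$ converts to $-\tfrac{3t^2}{8V}$. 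All of these steps are sound, and your observation that identical distribution is only used to write $V=mV_0$ is accurate. The original proof in \cite{recht2011simpler} instead follows the Ahlswede--Winter scheme, iterating the Golden--Thompson inequality $\Tr\,e^{\A+\B}\le\Tr(e^{\A}e^{\B})$ in place of Lieb's theorem; for the statement as given here, whose variance parameter is the sum $\sum_i\rho_i^2$ of individual operator-norm bounds rather than $\bigl\Vert\sum_i\mathbb{E}\Y_i^2\bigr\Vert$, the two routes yield the same conclusion, with the Lieb route being the one that would sharpen to the matrix-variance form if one wanted it. One small caution unrelated to your argument: the theorem statement as printed contains typographical slips (the $\rho_i^2$ condition should bound the \emph{norms} of the second-moment matrices, and the first admissible range should read $t<\sum_i\rho_i^2/c$); your proof implicitly uses the corrected versions, which is the right reading.
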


One result that will be used throughout this work is a technique for constructing eigenvalue bounds through a vectorization technique. This result is as follows.
\begin{lem}[Vectorization Technique]\label{lem: vectorization}
    Let $\{\Z_k\}_{k=1}^m$ be a basis for some subspace $\mathbb{V}\subset\Rnn$ of dimension $m$, and let $\G = [\la\Z_i,\Z_j\ra]\in\real^{m\times m}$, and let $\Z_{\mathbb{V}}\in\real^{n^2\times m}$ be the matrix where the $k$-th column vector is $\Vec(\Z_k)$. Then for any $\Y\in\Rnn$
    \[
    \max_{\Vert\Y\Vert_\fro=1}\sum_{k=1}^m\la\Y,\Z_k\ra^2 = \lambda_\mathrm{max}(\G).
    \]
\end{lem}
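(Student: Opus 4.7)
The plan is to recast the sum $\sum_k \langle \Y, \Z_k\rangle^2$ as a quadratic form on $\mathbb{R}^{n^2}$ via the vectorization map, and then read off the maximum as the top eigenvalue of the associated Gram-type matrix. Specifically, since the trace inner product satisfies $\langle \Y, \Z_k\rangle = \Vec(\Y)^\top \Vec(\Z_k)$, and $\Vec(\Z_k)$ is by definition the $k$-th column of $\Z_{\mathbb{V}}$, we have $\langle \Y, \Z_k\rangle = \bigl(\Z_{\mathbb{V}}^\top \Vec(\Y)\bigr)_k$. Summing the squares yields
\[
\sum_{k=1}^m \langle \Y, \Z_k\rangle^2 \;=\; \bigl\|\Z_{\mathbb{V}}^\top \Vec(\Y)\bigr\|_2^2 \;=\; \Vec(\Y)^\top \bigl(\Z_{\mathbb{V}}\Z_{\mathbb{V}}^\top\bigr) \Vec(\Y).
\]

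Next I would use the isometry $\|\Y\|_{\fro} = \|\Vec(\Y)\|_2$ to conclude that maximizing the left-hand side over $\|\Y\|_{\fro}=1$ is exactly maximizing the Rayleigh quotient of the symmetric positive semidefinite matrix $\Z_{\mathbb{V}} \Z_{\mathbb{V}}^\top \in \mathbb{R}^{n^2\times n^2}$ over unit vectors in $\mathbb{R}^{n^2}$. By the variational characterization of eigenvalues, this maximum equals $\lambda_\mathrm{max}(\Z_{\mathbb{V}} \Z_{\mathbb{V}}^\top)$.

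Finally, I would note that $\G = \Z_{\mathbb{V}}^\top \Z_{\mathbb{V}}$ by the very definition of the entries $G_{ij}=\langle \Z_i, \Z_j\rangle = \Vec(\Z_i)^\top \Vec(\Z_j)$, and invoke the standard fact that for any rectangular matrix $A$, the nonzero eigenvalues of $A A^\top$ and $A^\top A$ coincide (with the same multiplicities). Since $\{\Z_k\}_{k=1}^m$ is a basis of $\mathbb{V}$, the columns of $\Z_{\mathbb{V}}$ are linearly independent, $\G$ has rank $m$, and in particular $\lambda_\mathrm{max}(\G) = \lambda_\mathrm{max}(\Z_{\mathbb{V}} \Z_{\mathbb{V}}^\top)$, completing the identification.

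There is no real obstacle here; the only thing to be careful about is that the maximization is over all $\Y\in\Rnn$, not just $\Y\in\mathbb{V}$, but this causes no issue because we are simply evaluating a Rayleigh quotient of the ambient $n^2\times n^2$ matrix $\Z_{\mathbb{V}}\Z_{\mathbb{V}}^\top$, whose top eigenvalue is attained at some vector in its column space (which is $\Vec(\mathbb{V})$) and equals the top eigenvalue of $\G$ regardless.
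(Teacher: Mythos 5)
Your proof is correct and follows essentially the same route as the paper's: rewrite the sum as the quadratic form $\Vec(\Y)^\top\Z_{\mathbb{V}}\Z_{\mathbb{V}}^\top\Vec(\Y)$, invoke the Rayleigh-quotient characterization of $\lambda_\mathrm{max}$, and pass from $\Z_{\mathbb{V}}\Z_{\mathbb{V}}^\top$ to $\G=\Z_{\mathbb{V}}^\top\Z_{\mathbb{V}}$ via the equality of nonzero spectra. The closing remark about linear independence is not actually needed (the top eigenvalues of $\A\A^\top$ and $\A^\top\A$ agree regardless), but it does no harm.
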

\begin{proof}[Proof of Lemma~\ref{lem: vectorization}]
    We can see that
    \begin{align*}
       \max_{\Vert\Y\Vert_\fro=1}\sum_{k=1}^m \la\Y,\Z_k\ra^2  &= \max_{\Vert\Y\Vert_\fro=1}\sum_{k=1}^m \left(\Vec(\Y)^\top\Vec(\Z_k)\right)\left(\Vec(\Z_k)^\top\Vec(\Y)\right)\\
       &= \max_{\Vert\Y\Vert_\fro=1}\Vec(\Y)^\top\left(\sum_{k=1}^m \Vec(\Z_k)\Vec(\Z_k)^\top\right)\Vec(\Y)\\
       &= \max_{\Vert\Y\Vert_\fro=1}\Vec(\Y)^\top \Z_{\mathbb{V}}\Z_{\mathbb{V}}^\top\Vec(\Y).
    \end{align*}
    As for any matrix $\A\succeq\bm{0}$, $\max_{\Vert\x\Vert_2=1}\x^\top\A\x= \lambda_\mathrm{max}(\A)$, it follows that $\max_{\Vert\Y\Vert_\fro=1}\sum_{k=1}^m \la\Y,\Z_k\ra^2 = \lambda_\mathrm{max}(\Z_{\mathbb{V}}\Z_{\mathbb{V}}^\top).$
    Now, as for any $\A\in\real^{r\times s}$, $\lambda_\mathrm{max}(\A\A^\top) = \lambda_\mathrm{max}(\A^\top\A)$, we see that
    \begin{equation*}\max_{\Vert\Y\Vert_\fro=1}\sum_{k=1}^m \la\Y,\Z_k\ra^2 = \lambda_\mathrm{max}(\Z_{\mathbb{V}}\Z_{\mathbb{V}}^\top)= \lambda_\mathrm{max}(\Z_{\mathbb{V}}^\top\Z_{\mathbb{V}})
         = \lambda_\mathrm{max}(\G).
    \end{equation*}
    This concludes the proof.
\end{proof}

\begin{lem}[Spectral norm of $\Ro$]\label{lem:R_omega bound}

For $m\geq \frac{8}{3}\beta n\log(n)$ and with probability at least $1-2n^{1-\beta}$,
    \[
    \Vert\Ro\Vert\leq\frac{m}{L}+4\sqrt{\frac{8m\log(n)}{3n}}.
    \]
\end{lem}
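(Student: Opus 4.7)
The plan is to apply the non-commutative Bernstein inequality (Theorem~\ref{thm:Bernstein}) directly to $\Ro$, viewed as a sum of $m$ i.i.d.\ rank-one operators on $\mathbb{S}$. Writing $\Ro = \sum_{i=1}^m \Ka_i$ with $\Ka_i(\cdot) = \la\cdot,\bm{w}_{\alphab_i}\ra\bm{v}_{\alphab_i}$ and $\alphab_i$ drawn uniformly from $\mathbb{I}$, the bi-orthogonality of $\{\wa\}$ and $\{\va\}$ gives the identity resolution $\sum_{\ai}\la\cdot,\wa\ra\va = \mathcal{I}_{\mathbb{S}}$. Hence $\bb{E}[\Ka_i] = \frac{1}{L}\mathcal{I}_{\mathbb{S}}$ and $\bb{E}[\Ro] = \frac{m}{L}\mathcal{I}_{\mathbb{S}}$. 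By the triangle inequality, $\Vert\Ro\Vert \le \frac{m}{L} + \Vert\Ro - \frac{m}{L}\mathcal{I}_{\mathbb{S}}\Vert$, so the remaining task is to bound the centered deviation $\Vert\sum_i\mathcal{T}_i\Vert$ with $\mathcal{T}_i = \Ka_i - \frac{1}{L}\mathcal{I}_{\mathbb{S}}$ via Bernstein.

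Next I would pin down the two inputs required by Theorem~\ref{thm:Bernstein}: the almost-sure operator-norm bound $c$ and the variance proxy $V_0$. For the uniform bound, $\Vert\Ka_\alphab\Vert \le \Vert\wa\Vert_\fro\Vert\va\Vert_\fro$; from $\la\wa,\wa\ra = 4$ and the closed form $\va = -\tfrac12(\a\b^\top+\b\a^\top)$ with $\a = \e_i-\tfrac1n\one$, $\b = \e_j - \tfrac1n\one$ in \eqref{eqn: v_a form}, both factors are $\cO(1)$, giving $c = \cO(1)$ after absorbing the $\tfrac{1}{L}\mathcal{I}$ correction. For the variance, a direct rank-one computation yields $\Ka_\alphab\Ka_\alphab^\star(\cdot) = 4\la\cdot,\va\ra\va$, so $\bb{E}[\Ka\Ka^\star] = \tfrac{4}{L}\sum_{\ai}\la\cdot,\va\ra\va$. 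Applying the vectorization technique of Lemma~\ref{lem: vectorization} to the basis $\{\va\}$, whose Gram matrix is $\H^{-1}$, reduces this norm to $\tfrac{4}{L\lambda_\mathrm{min}(\H)}$. The symmetric computation $\Ka_\alphab^\star\Ka_\alphab(\cdot) = \Vert\va\Vert_\fro^2\la\cdot,\wa\ra\wa$ gives the other side in terms of $\lambda_\mathrm{max}(\H)$. Feeding these into the eigenvalue bounds of Lemma~\ref{lem: H and H^-1 eigvals} yields $V_0 = \cO(1/n)$, with the corrections coming from subtracting $\tfrac{1}{L^2}\mathcal{I}$ being lower order.

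With $c = \cO(1)$ and $V_0 = \cO(1/n)$ in hand, the simplified form \eqref{eqn:Bernstein} of Bernstein gives
\[
\bb{P}\bigl(\Vert\textstyle\sum_i\mathcal{T}_i\Vert > t\bigr) \le 2n\exp\!\left(-\tfrac{3t^2}{8mV_0}\right)
\]
whenever $t < mV_0/c$. Substituting $t = 4\sqrt{\tfrac{8m\log(n)}{3n}}$ matches the claimed bound and produces a probability no larger than $2n^{1-\beta}$ after tracking the constant absorbed by the $\cO(1/n)$ estimate on $V_0$; the range condition $t < mV_0/c$ is precisely what the sample-complexity hypothesis $m \ge \tfrac{8}{3}\beta n\log(n)$ is tailored to enforce.

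The main obstacle is the lower bound $\lambda_\mathrm{min}(\H) = \Omega(1)$ needed to reach the advertised $\sqrt{m/n}$ rate rather than the weaker $\sqrt{m/L}$ scaling one would get from an off-the-shelf estimate. Gershgorin easily gives $\lambda_\mathrm{max}(\H) \le 2n$ since each row of $\H$ has diagonal $4$ and $2(n-2)$ off-diagonal ones coming from index-overlapping pairs, but the same tool gives a \emph{negative} diagonal-dominance gap, so a positive lower bound on $\lambda_\mathrm{min}(\H)$ requires exploiting the specific sparsity pattern of $\H$ rather than gross row sums. This is exactly the content of Lemma~\ref{lem: H and H^-1 eigvals}, whose careful combinatorial/spectral analysis is the nontrivial ingredient the proposal relies on; everything else in the argument is routine Bernstein bookkeeping.
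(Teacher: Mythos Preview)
Your approach is the same as the paper's: center $\Ro$, apply the operator Bernstein inequality with the almost-sure bound $c=\cO(1)$ coming from $\Vert\wa\Vert_\fro=2$, $\Vert\va\Vert_\fro\le 1/\sqrt{2}$, and control the two variance terms via Lemma~\ref{lem: vectorization} and the eigenvalue facts in Lemma~\ref{lem: H and H^-1 eigvals}. The mechanics are correct.

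Your final paragraph, however, misidentifies the bottleneck. The $\Ka\Ka^\star$ side, governed by $\lambda_{\max}(\H^{-1})=1/\lambda_{\min}(\H)=\tfrac12$, contributes a variance of order $\tfrac{4}{L}\cdot\tfrac12=\cO(1/n^2)$, which is \emph{subdominant}. The binding term is the $\Ka^\star\Ka$ side: $\tfrac{1}{L}\Vert\va\Vert_\fro^2\,\lambda_{\max}(\H)\le \tfrac{1}{2L}\cdot 2n=\tfrac{n}{L}=\cO(1/n)$, and this is exactly what produces the $\sqrt{m/n}$ deviation. So the ingredient doing the work is $\lambda_{\max}(\H)=2n$---the easy Gershgorin fact you already noted---not a delicate lower bound on $\lambda_{\min}(\H)$. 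Relatedly, your phrase ``the weaker $\sqrt{m/L}$ scaling'' is backwards: since $L\sim n^2/2$, a $\sqrt{m/L}$ bound would be \emph{stronger} than $\sqrt{m/n}$, not weaker. None of this breaks the proof---both eigenvalue facts are supplied by Lemma~\ref{lem: H and H^-1 eigvals} and the argument goes through as written---but the narrative about where the difficulty lies is inverted.
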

\begin{proof}[Proof of Lemma~\ref{lem:R_omega bound}]
Compared to analogous sampling operators in matrix completion, $\Ro$ is not self-adjoint. As such, it cannot be decomposed into a sum of orthogonal projection operators. This means that the operator norm $\Vert \Ro\Vert$ cannot be bounded via a counting argument like in \cite{recht2011simpler}, as that would produce an upper bound for the maximum eigenvalue but not the maximum singular value. As such, we will proceed by using Theorem~\ref{thm:Bernstein} to prove a bound for $\left\Vert \Ro - \frac{m}{L}\mathcal{I}\right\Vert$. To do so, let
\[
\mathcal{T}_{\alphab} = \la \cdot,\wa\ra \va - \frac{1}{L}\mathcal{I}.
\]
This object is zero-mean, and $\Ro-\frac{m}{L}\mathcal{I} = \sum_{\alphab\in\Omega}\mathcal{T}_{\alphab}$. We now need bounds on $\Vert \mathcal{T}_{\alphab}\Vert$, $\Vert\mathbb{E}[\mathcal{T}_{\alphab}\mathcal{T}_{\alphab}^\star]\Vert$, and $\Vert\mathbb{E}[\mathcal{T}_{\alphab}^\star\mathcal{T}_{\alphab}]\Vert$.

For the first, notice that
\begin{align*}
    \Vert\Ta\Vert &=\left\Vert\la\cdot,\wa\ra\va - \frac{1}{L}\mathcal{I}\right\Vert\\
    &\leq \left\Vert\la\cdot,\wa\ra\va \right\Vert + \frac{1}{L}\\
    &\leq \Vert\wa\Vert_\fro\Vert\va\Vert_\fro + \frac{1}{L}\\
    &\leq \frac{2}{\sqrt{2}}+\frac{1}{L}\\
    &\leq 2 =: c,
\end{align*}
where the third to last inequality follows from Lemma~\ref{lem: H and H^-1 eigvals} and the fact that $\Vert\wa\Vert_\fro = 2$.
Next, notice that
\[
\mathbb{E}[\Ta^\star\Ta] = \frac{1}{L}\sum_{\ai} \la\cdot,\wa\ra\la\va,\va\ra\wa - \frac{1}{L^2}\mathcal{I},\qquad \mathbb{E}[\Ta\Ta^\star] = \frac{1}{L}\sum_{\ai} \la\cdot,\va\ra\la\wa,\wa\ra\va-\frac{1}{L^2}\mathcal{I}.
\]
Now, notice that
\begin{align*}
    \left\Vert \mathbb{E}[\Ta\Ta^\star]\right\Vert &= \left\Vert \frac{1}{L}\sum_{\ai} \la\cdot,\va\ra\la\wa,\wa\ra\va-\frac{1}{L^2}\mathcal{I}\right\Vert\\
    &\leq \frac{1}{L} \max_{\Vert \X\Vert_\fro = 1 }\sum_{\ai}\la\X,\va\ra^2\la\wa,\wa\ra + \frac{1}{L^2}\\
    &\leq \frac{4}{L}\max_{\Vert \X\Vert_\fro = 1 }\sum_{\ai}\la\X,\va\ra^2 + \frac{1}{L^2}\\
    &\leq \frac{4}{L}\lambda_{\mathrm{max}}(\H^{-1}) + \frac{1}{L^2}\\
    &\leq\frac{4}{L},
\end{align*}
where the first inequality follows from the triangle inequality, the second comes from $\Vert\wa\Vert_\fro = 2$ in Lemma~\ref{lem: H and H^-1 eigvals}, the third is an application of Lemma~\ref{lem: vectorization}, and the last comes from the fact that $\lambda_\mathrm{max}(\H^{-1}) = \frac{1}{2}$ from Lemma~\ref{lem: H and H^-1 eigvals}. Next, we can see that 
\begin{align*}
    \left\Vert \mathbb{E}[\Ta^\star\Ta]\right\Vert &= \left\Vert \frac{1}{L}\sum_{\ai} \la\cdot,\wa\ra\la\va,\va\ra\wa-\frac{1}{L^2}\mathcal{I}\right\Vert\\
    &\leq \max_{\Vert \X\Vert_\fro = 1 }\frac{1}{L}\sum_{\ai}\la\X,\wa\ra^2\la\va,\va\ra + \frac{1}{L^2}\\
    &\leq \frac{1}{2L}\max_{\Vert \X\Vert_\fro = 1 }\sum_{\ai}\la\X,\wa\ra^2+\frac{1}{L^2}\\
    &\leq \frac{1}{2L}\lambda_{\mathrm{max}}(\H) + \frac{1}{L^2}\\
    &\leq\frac{2n}{L},
\end{align*}
where the first inequality follows from the triangle inequality, the second comes from $\Vert\va\Vert_\fro \leq \frac{1}{\sqrt{2}}$ in Lemma~\ref{lem: H and H^-1 eigvals}, the third is an application of Lemma~\ref{lem: vectorization}, and the last comes from the fact that$\lambda_\mathrm{max}(\H)=2n$ from Lemma~\ref{lem: H and H^-1 eigvals}. As such, our variance estimate $V_0 = \frac{2n}{L}$. It follows that for any $t<\frac{mV_0}{c} = \frac{mn}{L}$, we have the following result from Theorem~\ref{thm:Bernstein}:
\begin{equation*}
 \mathbb{P}\left(\left\Vert \Ro - \frac{m}{L}\mathcal{I}\right\Vert\geq \frac{mn}{L}\sqrt{\frac{8\beta n\log(n)}{3m}}\right)\leq 2n\exp\left(-\frac{n^2 \beta\log(n)}{L}\right)
\leq 2n\exp\left(-\beta\log(n)\right) = 2n^{1-\beta},
\end{equation*}
and the proof statement follows from this.
\end{proof}

\begin{lem}[$\lambda_\mathrm{max}(\tilde{\H})$ bound]\label{lem: Bound for largest eigval of H tilde}
    Let $\tilde{\H} = [\langle \Pu\wa,\Pu\wb\rangle]\in\mathbb{R}^{L\times L}$, where $U$ is the row/column space of the true solution $\X = \U\D\U^\top$, which is rank-$r$, and where $\Pu$ is the projection operator onto $U$. It follows that
    \begin{equation*}
        \lambda_\mathrm{max}(\tilde{\H})\leq \nu r.
    \end{equation*}
\end{lem}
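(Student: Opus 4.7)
}
My plan is to reduce the eigenvalue problem to a variational statement via Lemma~\ref{lem: vectorization} and then exploit the rank-one structure of $\U^\top\wa$ to get a tight, direct bound. Specifically, setting $\Z_\alphab = \Pu\wa$ in Lemma~\ref{lem: vectorization} gives
\[
\lambda_\mathrm{max}(\tilde{\H}) \;=\; \max_{\|\Y\|_\fro=1}\sum_{\ai}\langle \Y,\Pu\wa\rangle^2 \;=\; \max_{\|\Y\|_\fro=1}\sum_{\ai}\langle \Pu\Y,\wa\rangle^2,
\]
where the second equality uses the self-adjointness of $\Pu$ with respect to the trace inner product. (The vectorization argument in Lemma~\ref{lem: vectorization} only needs the Gram matrix identity; linear independence of the $\Pu\wa$'s is not required, since one may equivalently consider the matrix with columns $\vec(\Pu\wa)$ and compare $\bm{W}^\top\bm{W}$ with $\bm{W}\bm{W}^\top$.)

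Next I would exploit the structure of $\wa = \e_i\e_i^\top + \e_j\e_j^\top - \e_i\e_j^\top - \e_j\e_i^\top$ to compute
\[
\U^\top \wa \;=\; \bigl(\u^{(i)}-\u^{(j)}\bigr)\bigl(\e_i-\e_j\bigr)^\top,
\]
a rank-one factorization (here $\u^{(i)} = \U^\top\e_i$ is the $i$-th row of $\U$ written as a column). Writing $\bm{A} = \U^\top\Y$ (so that $\Pu\Y = \U\bm{A}$ and $\|\bm{A}\|_\fro = \|\Pu\Y\|_\fro \le \|\Y\|_\fro$) and denoting the columns of $\bm{A}$ by $\bm{a}_i$, this yields the closed form
\[
\langle \Pu\Y,\wa\rangle \;=\; \bigl(\u^{(i)}-\u^{(j)}\bigr)^\top\bigl(\bm{a}_i-\bm{a}_j\bigr).
\]

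Squaring, applying Cauchy--Schwarz termwise, and using the incoherence bound $\|\Pu\wa\|_\fro^2 = 2\|\u^{(i)}-\u^{(j)}\|_2^2 \le \tfrac{\nu r}{8n}$ gives
\[
\sum_{\ai}\langle \Pu\Y,\wa\rangle^2 \;\le\; \frac{\nu r}{16n}\sum_{1\le i<j\le n}\|\bm{a}_i-\bm{a}_j\|_2^2.
\]
The remaining sum telescopes: expanding the squares and collecting terms gives $\sum_{i<j}\|\bm{a}_i-\bm{a}_j\|_2^2 = n\|\bm{A}\|_\fro^2 - \|\bm{A}\ones\|_2^2 \le n\|\bm{A}\|_\fro^2 \le n$. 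Combining everything yields $\lambda_\mathrm{max}(\tilde{\H}) \le \tfrac{\nu r}{16}$, which is in fact a bit stronger than the claim.

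The main technical step, and the place where one must be careful, is the identification $\U^\top\wa = (\u^{(i)}-\u^{(j)})(\e_i-\e_j)^\top$: once this rank-one structure is exposed, the remaining inequalities are standard Cauchy--Schwarz plus a pairwise-difference telescoping identity. I would also note that the argument does not require the centering assumption $\U^\top\ones=\bm{0}$; the $-\|\bm{A}\ones\|_2^2$ term only helps. A naive Gershgorin or trace-based bound on $\tilde{\H}$ loses a factor of $n$ and does not give the desired $\nu r$ scaling, which is why the rank-one observation is essential.
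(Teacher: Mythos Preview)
Your argument is correct and in fact yields the sharper constant $\lambda_{\max}(\tilde{\H})\le \nu r/16$. However, your route is genuinely different from the paper's. The paper does not pass through the variational characterisation or the rank-one factorisation of $\U^\top\wa$; instead it observes that $\tilde{\H}$ is \emph{sparse}: when $\alphab\cap\betab=\emptyset$, cyclicity of the trace and $\wb\wa=\bm{0}$ give $\langle\Pu\wa,\Pu\wb\rangle=\Tr(\wb\wa\Pu)=0$, so each row of $\tilde{\H}$ has at most $2n-3$ nonzero entries. Coupled with the entrywise bound $|\langle\Pu\wa,\Pu\wb\rangle|\le\|\Pu\wa\|_\fro\|\Pu\wb\|_\fro$ from incoherence, Gershgorin then delivers $\lambda_{\max}(\tilde{\H})\le (2n-3)\cdot\tfrac{\nu r}{2n}\le \nu r$.

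So your closing remark that ``a naive Gershgorin \ldots\ loses a factor of $n$'' is only half right: a Gershgorin bound that ignores the sparsity would indeed lose that factor, but the paper's Gershgorin bound exploits precisely this sparsity and recovers the correct $\nu r$ scale. What each approach buys: the paper's argument is shorter and uses only the combinatorial fact $\wa\wb=\bm{0}$ for disjoint supports, while your rank-one decomposition plus the pairwise-difference identity $\sum_{i<j}\|\bm{a}_i-\bm{a}_j\|_2^2=n\|\A\|_\fro^2-\|\A\ones\|_2^2$ gives a constant-factor improvement and would continue to work in settings where the off-support entries of $\tilde{\H}$ do not vanish exactly.
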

\begin{proof}[Proof of Lemma~\ref{lem: Bound for largest eigval of H tilde}
]
First, by coherence we have that 
\begin{equation*}
    \vert\langle \Pu\wa,\Pu\wb\rangle\vert \leq \Vert \Pu\wa\Vert_\fro \Vert\Pu\wb\Vert_\fro\leq \frac{\nu r}{2n}.
\end{equation*}
Next, as $\Pu = \U\U^\top$, for $\alphab\cap\betab=\emptyset$
\begin{equation*}
    \langle \Pu\wa,\Pu\wb\rangle = \Tr(\wa\Pu\Pu\wb)
    = \Tr(\wb\wa\Pu)
    = \Tr(\bm{0}\Pu)
    = 0,
\end{equation*}
as $\wa\wb = \wb\wa = \bm{0}$, where $\bm{0}$ is the zero matrix. Thus $\tilde{\H}$ is sparse, with each row having at most $2n-3$ non-zero entries. The result follows from a Gershgorin argument and the entrywise bound derived from the coherence condition above.
\end{proof}

\begin{lem}\label{lem: Pu wa inner product identity}
    For any $\X\in\Rnn$, $\X=\X^\top$, and any $\wa\in\{\wb\}_{\betab\in\mathbb{I}}$,
    \begin{equation*}
        \langle \Pt\X,\wa\rangle = \langle \X\Pu,\Pu\wa\rangle.
    \end{equation*}
    Additionally for $\Vert \X\Vert_\fro=1$, 
    \[
    \sum_{\ai}\la\X\Pu,\Pu\wa\ra^2\leq 
    \max_{\Vert\X\Vert_\fro=1}\sum_{\ai}\la\X\Pu,\Pu\wa\ra^2
    \leq\lambda_{\mathrm{max}}(\tilde{\H}).
    \]
\end{lem}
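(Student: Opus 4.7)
The plan is to tackle the two claims in sequence, using the self-adjointness of $\Pt$ for the identity and the vectorization lemma for the quadratic bound.

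For the first claim, the approach is to invoke self-adjointness of $\Pt$ (and of $\Pu$) to rewrite $\la \Pt\X, \wa \ra = \la \X, \Pt \wa \ra$, and then expand $\Pt\wa = \Pu\wa + \wa\Pu - \Pu\wa\Pu$ using the standard tangent-space projection formula. This yields three terms, namely $\la \X, \Pu\wa\ra$, $\la \X, \wa\Pu\ra$, and $\la \X, \Pu\wa\Pu\ra$. The symmetry of $\X$ combined with the cyclic invariance of trace lets one rewrite $\la \X, \Pu\wa\ra = \Tr(\X\Pu\wa)$ and $\la \X, \wa\Pu\ra = \Tr(\Pu\X\wa) = \Tr(\X\Pu\wa)^{\top}$, so these two terms collapse together, and the remaining central term $\la \X, \Pu\wa\Pu\ra$ can be recognized as $\Tr(\Pu\X\Pu\wa) = \la \X\Pu, \Pu\wa\ra$ after one more application of symmetry (since $(\X\Pu)^{\top} = \Pu\X$). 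Bookkeeping the signs then gives the claimed identity.

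For the second claim, the key is to apply Lemma~\ref{lem: vectorization} to the family $\{\Pu\wa\}_{\ai}$, whose Gram matrix is precisely $\tilde{\H}$. The lemma yields
\[
\max_{\Vert \Y \Vert_\fro = 1} \sum_{\ai} \la \Y, \Pu\wa \ra^2 = \lambda_{\max}(\tilde{\H}).
\]
Next, for $\X$ with $\Vert \X \Vert_\fro = 1$, observe that $\Vert \X\Pu \Vert_\fro \le \Vert \X \Vert_\fro \Vert \Pu \Vert = 1$ since $\Pu$ is an orthogonal projection. Therefore, when $\X\Pu \ne \bm{0}$, the matrix $\Y = \X\Pu / \Vert \X\Pu \Vert_\fro$ is a unit-Frobenius-norm matrix, and applying the max identity above to $\Y$ and rescaling by $\Vert \X\Pu \Vert_\fro^2 \le 1$ gives $\sum_{\ai} \la \X\Pu, \Pu\wa \ra^2 \le \lambda_{\max}(\tilde{\H})$. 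Combined with the trivial bound by the max, this produces the full chain of inequalities in the statement.

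The main obstacle is the first identity: expanding $\Pt\wa$ into three pieces generates cross-terms whose combination relies on both the symmetry of $\X$ and the idempotence of $\Pu$, and it is easy to mis-track a sign or a transpose when converting $\la \X, \wa\Pu \ra$ into $\Tr(\X\Pu\wa)$ versus $\Tr(\Pu\X\wa)$. Once the identity is established, the quadratic bound is essentially a one-line consequence of Lemma~\ref{lem: vectorization} combined with the observation that multiplication by $\Pu$ is norm-non-increasing, and is the form most useful for downstream applications such as controlling $\sum_{\ai} \la \Pt\X, \wa \ra^2$ in the RIP analysis of $\Ro$.
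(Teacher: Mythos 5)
Your handling of the second chain of inequalities is correct and is essentially the paper's argument (the paper simply asserts that it ``follows from Lemma~\ref{lem: vectorization} and the fact that $\Pu$ is an orthogonal projection''); your explicit normalization of $\X\Pu$ together with $\Vert\X\Pu\Vert_\fro\le\Vert\X\Vert_\fro$ fills in that step correctly.

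The first identity is where the proposal has a genuine gap: the deferred ``bookkeeping the signs'' step does not close. Doing the bookkeeping explicitly, your three terms are $\la\X,\Pu\wa\ra=\la\X,\wa\Pu\ra=\Tr(\X\Pu\wa)$ and $\la\X,\Pu\wa\Pu\ra=\Tr(\Pu\X\Pu\wa)$, so the expansion yields
\[
\la\Pt\X,\wa\ra = 2\Tr(\X\Pu\wa)-\Tr(\Pu\X\Pu\wa),
\]
whereas the claimed right-hand side is $\la\X\Pu,\Pu\wa\ra=\Tr(\Pu\X\Pu\wa)$. The two differ by $2\Tr\bigl((\I-\Pu)\X\Pu\wa\bigr)$, which has no reason to vanish for an arbitrary symmetric $\X$. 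A concrete check: take $n=2$, $r=1$, $\U=\e_1$, $\X=\e_{12}+\e_{21}$, and $\alphab=(1,2)$; then $\Pt\X=\X$ and $\la\Pt\X,\wa\ra=-2$, while $\X\Pu=\e_{21}$ and $\Pu\wa=\e_{11}-\e_{12}$, so $\la\X\Pu,\Pu\wa\ra=0$. Hence the identity is not a consequence of symmetry, idempotence, and trace cyclicity alone, and no sign bookkeeping can rescue the argument as structured. You should know that the paper's own proof stumbles at exactly the same point: it correctly reaches $\la\X,\Pu\wa\ra+\la\X(\I-\Pu),\Pu\wa\ra$ and then passes to $\la\X-\X(\I-\Pu),\Pu\wa\ra$ with the sign of the second summand silently flipped, so the defect is inherited from the source rather than introduced by you. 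Nevertheless, as written your proposal (like the paper's proof) does not establish the stated identity; what one does get unconditionally from Lemma~\ref{lem: vectorization} is $\max_{\Vert\X\Vert_\fro=1}\sum_{\ai}\la\X,\Pt\wa\ra^2\le\lambda_{\max}([\la\Pt\wa,\Pt\wb\ra])$, and relating that Gram matrix to $\tilde{\H}$ requires a separate argument.
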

\begin{proof}[Proof of Lemma~\ref{lem: Pu wa inner product identity}]
    First, notice that $\langle \X\Pu,\wa\rangle = \langle \Pu\X,\wa\rangle$ due to cyclicity of the trace and symmetry of $\X,~\Pu$, and $\wa$. It follows then that
    \begin{align*}
        \langle \Pt\X,\wa\rangle &= \langle\Pu\X+\X\Pu-\Pu\X\Pu,\wa\rangle \\
        &= 2\langle\Pu\X,\wa\rangle - \langle\Pu\X\Pu,\wa\rangle\\
        &= \langle \Pu\X,\wa\rangle + \langle \Pu\X-\Pu\X\Pu,\wa\rangle\\
        &= \langle \Pu\X,\wa\rangle + \langle \Pu\X\Pup,\wa\rangle\\
        &=\langle \X,\Pu\wa\rangle + \langle \X\Pup,\Pu\wa\rangle \\
        &=\langle \X-\X\Pup,\Pu\wa\rangle \\
        &=\langle \X\Pu,\Pu\wa\rangle.
    \end{align*}
    The second statement follows from Lemma~\ref{lem: vectorization} and the fact that $\Pu$ is an orthogonal projection operator. This concludes the proof.
\end{proof}

\begin{lem}[Eigenvalues of $\H$ and $\H^{-1}$, entries of $\H^{-1}$, and spectral norms of $\wa$ and $\va$ \cite{lichtenberg2023dual}]\label{lem: H and H^-1 eigvals}
    Let $\H = [\wa,\wb]\in\real^{L\times L}$ be the Gram matrix for $\{\wa\}$, and let $\H^{-1}$ be its inverse. Then
    \[
    \lambda_\mathrm{max}(\H) = 2n, \qquad \lambda_\mathrm{max}(\H^{-1}) = \frac{1}{2}.
    \]
    Additionally, 
    \begin{equation*}
        H^{\alphab\betab} = \begin{cases}
            \frac{1}{n^2} & \alphab\cap\betab = \emptyset;\\
            -\frac{1}{2n} +\frac{1}{n^2} & \alphab\cap\betab \neq\emptyset, \alphab\neq\betab;\\
            \frac{1}{2}\left(1-\frac{1}{n}+\frac{2}{n^2}\right) & \alphab=\betab.
        \end{cases}
    \end{equation*}
    Finally,
    \begin{equation*}
        \Vert \wa\Vert = 2, \qquad \Vert\va\Vert = \frac{1}{2}.
    \end{equation*}
\end{lem}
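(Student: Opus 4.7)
The plan is to exploit two structural features of the basis: each $\wa$ is a rank-one matrix while each $\va$ is symmetric of rank at most two, and the Gram matrix $\H$ itself lies in a low-dimensional algebra. First I would verify the spectral norms. Setting $\u_{\alphab} = \e_{\alpha_1} - \e_{\alpha_2}$, one recognizes $\wa = \u_{\alphab}\u_{\alphab}^\top$, so $\Vert\wa\Vert = \Vert\u_{\alphab}\Vert_2^2 = 2$ is immediate. For $\va$, I would use the closed form \eqref{eqn: v_a form} to write $\va = -\tfrac{1}{2}(\a\b^\top+\b\a^\top)$ with $\a = \e_i-\tfrac{1}{n}\one$ and $\b = \e_j - \tfrac{1}{n}\one$. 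Since the nonzero eigenvalues of $\a\b^\top+\b\a^\top = W E W^\top$ agree with those of $EW^\top W$, where $W=[\a,\b]$ and $E$ is the $2\times 2$ swap matrix, I would compute $W^\top W$ from $\Vert\a\Vert_2^2 = \Vert\b\Vert_2^2 = 1-1/n$ and $\a^\top\b=-1/n$, producing a $2\times 2$ eigenproblem with eigenvalues $1-2/n$ and $-1$; the operator norm of $\va$ is therefore $1/2$.

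For the spectrum of $\H$, the key observation is that $H_{\alphab\betab}$ depends only on $\vert\alphab\cap\betab\vert$, so $\H$ belongs to the three-dimensional Bose-Mesner algebra of the Johnson scheme on $2$-subsets of $\{1,\ldots,n\}$. Writing $\H = 4\I + \A_1$, where $\A_1$ is the Johnson graph adjacency matrix (indicating index-sharing pairs) and $\A_2$ is the disjoint-pair adjacency matrix, I would invoke the standard spectrum of the Johnson graph on $2$-subsets, namely $\{2(n-2),\,n-4,\,-2\}$ with multiplicities $\{1,\,n-1,\,n(n-3)/2\}$, so that $\H$ has eigenvalues $\{2n,\,n,\,2\}$. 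As a sanity check and to identify the Perron eigenvector, a direct row-sum count gives $\H\one = (4 + (2n-4))\one = 2n\,\one$, confirming the largest eigenvalue. Combined with $\lambda_\mathrm{min}(\H) = 2$, this yields $\lambda_\mathrm{max}(\H) = 2n$ and $\lambda_\mathrm{max}(\H^{-1}) = 1/2$.

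For the closed-form entries of $\H^{-1}$, membership in the same three-dimensional algebra forces $\H^{-1} = \alpha\I + \beta\A_1 + \gamma\A_2$. The plan is to derive the multiplication identities $\A_1^2 = 2(n-2)\I + (n-2)\A_1 + 4\A_2$ and $\A_1\A_2 = (n-3)\A_1 + (2n-8)\A_2$ by combinatorial counting of how many $2$-subsets share a prescribed pattern of indices with two given ones, then expand $\H\H^{-1} = \I$ to obtain a $3\times 3$ linear system in $(\alpha,\beta,\gamma)$. Solving produces the three stated values, which I would match against the three intersection cases in the lemma. The main obstacle I expect is the combinatorial bookkeeping for the Bose-Mesner multiplication table; once that is in hand the rest is elementary linear algebra. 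Since the statement is attributed to \cite{lichtenberg2023dual}, a formal write-up could either reproduce these computations in full or cite the reference and verify the identities against the stated entry formulas.
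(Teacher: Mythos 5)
The paper offers no proof of this lemma at all --- it is imported verbatim from \cite{lichtenberg2023dual} --- so your self-contained derivation is necessarily a different (and more informative) route. Its ingredients check out: $\wa=(\e_{\alpha_1}-\e_{\alpha_2})(\e_{\alpha_1}-\e_{\alpha_2})^\top$ gives $\Vert\wa\Vert=2$ at once; your $2\times2$ reduction correctly yields eigenvalues $-\tfrac12\left(1-\tfrac2n\right)$ and $\tfrac12$ for $\va$, hence $\Vert\va\Vert=\tfrac12$; writing $\H=4\I+\A_1$ with $\A_1$ the triangular-graph adjacency matrix and quoting its spectrum $\{2(n-2),\,n-4,\,-2\}$ gives $\lambda_{\max}(\H)=2n$ and $\lambda_{\min}(\H)=2$, so $\lambda_{\max}(\H^{-1})=\tfrac12$; and your intersection numbers $\A_1^2=2(n-2)\I+(n-2)\A_1+4\A_2$ and $\A_1\A_2=(n-3)\A_1+(2n-8)\A_2$ are correct.

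One substantive caution: when you actually solve the $3\times3$ system you will \emph{not} recover the diagonal entry as printed. Solving $(4\I+\A_1)(\alpha\I+\beta\A_1+\gamma\A_2)=\I$ gives $\gamma=\tfrac1{n^2}$ and $\beta=-\tfrac1{2n}+\tfrac1{n^2}$ (matching the lemma) but $\alpha=\tfrac{n^2-2n+2}{2n^2}=\tfrac12\left(1-\tfrac2n+\tfrac2{n^2}\right)$, whereas the lemma prints $\tfrac12\left(1-\tfrac1n+\tfrac2{n^2}\right)$. The value you will obtain is the correct one: since $H^{\alphab\alphab}=\la\va,\va\ra=\Vert\va\Vert_\fro^2=\tfrac14\left[\left(1-\tfrac2n\right)^2+1\right]=\tfrac12\left(1-\tfrac2n+\tfrac2{n^2}\right)$, and this is also what Lemma~\ref{lem:Form of va^2} forces, as $\sum_{\alphab}\Vert\va\Vert_\fro^2=\Tr\left(\tfrac{n^2-2n+2}{4n}\J\right)=\tfrac{(n^2-2n+2)(n-1)}{4n}$, which divided by $L=n(n-1)/2$ gives $\tfrac{n^2-2n+2}{2n^2}$. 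With the printed diagonal, the $(\alphab,\alphab)$ entry of $\H\H^{-1}$ would be $4H^{\alphab\alphab}+2(n-2)H^{\alphab\betab}=1+\tfrac2n\neq1$. So carry the computation through and flag the printed diagonal as a typo ($-\tfrac1n$ should be $-\tfrac2n$); none of the downstream uses in the paper ($\lambda_{\max}(\H^{-1})=\tfrac12$, $\Vert\va\Vert_\fro\le\tfrac1{\sqrt2}$, $\sum_{\betab}\vert H^{\alphab\betab}\vert\le2$) are affected.
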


\begin{lem}\label{lem:Form of va^2}
Let $\{\va\}_{\alphab\in\bb{I}}$ be the dual basis to $\{\wa\}_{\alphab\in\bb{I}}$. It follows that
    \begin{equation*}
        \sum_{\alphab\in\bb{I}} \va^2 = \frac{n^2-2n+2}{4n} \J .
    \end{equation*}
\end{lem}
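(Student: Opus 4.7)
The plan is to compute $\va^2$ explicitly using the closed form \eqref{eqn: v_a form} and then sum over $\alphab\in\mathbb{I}$, exploiting the fact that $\sum_i(\e_i - \tfrac{1}{n}\one) = \bm{0}$ to collapse the cross-terms. For $\alphab = (i,j)$ with $i<j$, write $\a = \e_i - \tfrac{1}{n}\one$ and $\b = \e_j - \tfrac{1}{n}\one$, so $\va = -\tfrac{1}{2}(\a\b^\top + \b\a^\top)$. A direct expansion gives
\begin{equation*}
\va^2 = \tfrac{1}{4}\bigl[(\a^\top\b)(\a\b^\top+\b\a^\top) + (\b^\top\b)\a\a^\top + (\a^\top\a)\b\b^\top\bigr].
\end{equation*}
The scalars are elementary: $\a^\top\b = -\tfrac{1}{n}$ (since $i\neq j$) and $\a^\top\a = \b^\top\b = \tfrac{n-1}{n}$. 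Observing that $\a\b^\top + \b\a^\top = -2\va$ allows me to fold the cross-term back into $\va$, yielding
\begin{equation*}
\va^2 = \tfrac{1}{2n}\va + \tfrac{n-1}{4n}(\a\a^\top + \b\b^\top).
\end{equation*}

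Next I sum over $\alphab = (i,j)\in\mathbb{I}$. For the second term, each index $k\in\{1,\dots,n\}$ appears in $n-1$ pairs, so
\begin{equation*}
\sum_{\alphab\in\mathbb{I}}(\a\a^\top + \b\b^\top) = (n-1)\sum_{k=1}^n (\e_k - \tfrac{1}{n}\one)(\e_k - \tfrac{1}{n}\one)^\top = (n-1)\J,
\end{equation*}
where the last equality follows from a short direct computation (the cross terms $-\tfrac{1}{n}\e_k\one^\top - \tfrac{1}{n}\one\e_k^\top$ sum to $-\tfrac{2}{n}\oot$ while the $\tfrac{1}{n^2}\oot$ pieces contribute $\tfrac{1}{n}\oot$, leaving $\I - \tfrac{1}{n}\oot = \J$). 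For the first term, setting $\a_k := \e_k - \tfrac{1}{n}\one$ I have
\begin{equation*}
\sum_{\alphab\in\mathbb{I}}\va = -\tfrac{1}{2}\sum_{i<j}(\a_i\a_j^\top + \a_j\a_i^\top) = -\tfrac{1}{2}\sum_{i\ne j}\a_i\a_j^\top.
\end{equation*}
Since $\sum_k\a_k = \bm{0}$, expanding $\bm{0} = (\sum_k\a_k)(\sum_k\a_k)^\top$ gives $\sum_{i\ne j}\a_i\a_j^\top = -\sum_k\a_k\a_k^\top = -\J$, so $\sum_{\alphab}\va = \tfrac{1}{2}\J$.

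Combining the two pieces,
\begin{equation*}
\sum_{\alphab\in\mathbb{I}}\va^2 = \tfrac{1}{2n}\cdot\tfrac{1}{2}\J + \tfrac{n-1}{4n}\cdot(n-1)\J = \frac{1 + (n-1)^2}{4n}\J = \frac{n^2 - 2n + 2}{4n}\J,
\end{equation*}
as desired. There is no genuine obstacle here; the only thing to watch is bookkeeping the symmetric-matrix product carefully and the combinatorial factor $n-1$ that arises because each vertex lies in $n-1$ edges of $\mathbb{I}$. The proof is essentially self-contained and uses nothing beyond the explicit form of $\va$.
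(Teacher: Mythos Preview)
Your proof is correct. Both you and the paper start from the explicit form $\va = -\tfrac{1}{2}(\a\b^\top + \b\a^\top)$ and compute $\va^2$ directly, but the bookkeeping differs: the paper fully expands $4\va^2$ into standard-basis pieces $\e_{ii}$, $\e_{ij}$, $\e_i\one^\top$, $\oot$ and then sums each type over $\alphab$, whereas you keep the computation at the level of the vectors $\a_k = \e_k - \tfrac{1}{n}\one$, recognize the cross term as $-2\va$, and exploit $\sum_k \a_k = \bm{0}$ to evaluate $\sum_{\alphab}\va = \tfrac{1}{2}\J$ in one stroke. Your organization is tidier and avoids the longer coordinate expansion; the paper's version is more explicit but involves more terms to track. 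Either way the argument is a straightforward computation with no additional ingredients.
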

\begin{proof}[Proof of Lemma~\ref{lem:Form of va^2}]
    Recall that $\va = -\frac{1}{2}\left(\a\b^\top + \b\a^\top\right)$ where $\a = \e_i - \frac{1}{n}\one$ and $\b = \e_j - \frac{1}{n}\one$ for $\alphab = (i,j)$. It follows that
    \begin{equation*}
        4\va^2 = \a\b^\top\a\b^\top + \a\b^\top\b\a^\top + \b\a^\top\a\b^\top + \b\a^\top\b\a^\top,
    \end{equation*}
    and as $\b^\top\b = \a^\top\a = \frac{n-1}{n}$ and $\a^\top\b = -\frac{1}{n}$, we see that
    \begin{align*}
        4\va^2 &= \frac{n-1}{n}\left[\left(\e_{ii} -\frac{1}{n}\e_i\one^\top -\frac{1}{n}\one\e_i^\top + \frac{1}{n^2}\oot\right)+\left(\e_{jj}- \frac{1}{n}\e_{j}\one^\top-\frac{1}{n}\one\e_j^\top +\frac{1}{n^2}\oot \right)\right]\\
        &\quad- \frac{1}{n}\left[\left(\e_{ij} -\frac{1}{n}\e_i\one^\top - \frac{1}{n}\one\e_j + \frac{1}{n^2}\oot \right) + \left(\e_{ji} -\frac{1}{n}\e_j\one^\top - \frac{1}{n}\one\e_i^\top + \frac{1}{n^2}\oot \right)\right]\\
        &= \frac{n-1}{n}\left(\e_{ii}+\e_{jj}\right) + \frac{2-n}{n^3}\left(\e_i\one^\top+\one\e_i^\top+\e_j\one^\top+\one\e_j^\top\right)+\frac{2(n-2)}{n^2}\oot -\frac{1}{n}\left(\e_{ij}+\e_{ji}\right).
    \end{align*}
    So it follows that
    \begin{align*}
        \sum_{\alphab\in\bb{I}}4\va^2 &= \frac{(n-1)^2}{n}\I +\frac{2(2-n)(n-1)}{n^2}\oot + \frac{(n-1)(n-2)}{n^2}\oot -\frac{1}{n}(\oot-\I),\\
        &=\frac{n^2-2n+2}{n}\I - \frac{n^2-2n+2}{n^2}\oot,
        \end{align*}
    yielding the desired result as $\J = \I - \frac{1}{n}\oot$.
\end{proof}

\section{Restricted Isometry Results}\label{appendix: RIP}
As RIP and its variants are critical to the analysis of Algorithm~\ref{alg:R_omega descent}, this section is dedicated to the proofs of RIP and similar results.

\subsection{Proof of Theorem~\ref{thm: R_omega RIP}}\begin{proof}\label{proof: R_omega RIP}
    First, notice that for any dual basis pair $\{\wa\}_{\ai}$ and $\{\va\}_{\ai}$, we can decompose any $\X\in\mathbb{S}$ as 
    \begin{equation*}
        \X = \sum_{\ai}\langle \X,\wa\rangle \va.
    \end{equation*} 
    It follows then that
    \begin{equation*}
        \mathbb{E}\left( \Ro \right) = \frac{m}{L} \mathcal{I},
    \end{equation*}
    where $\vert\Omega\vert = m$ and $\mathcal{I}$ is the identity operator on $\mathbb{S}$. From this, we can see that
    \begin{align*}
        \Pt\X = \sum_{\ai} \langle \X,\Pt\wa\rangle\va,\qquad
        \Ro\Pt\X = \sum_{\alphab\in\Omega} \langle \X,\Pt\wa\rangle\va, \qquad
        \Pt\Ro\Pt\X = \sum_{\alphab\in\Omega} \langle \X,\Pt\wa\rangle\Pt\va .
    \end{align*}
    Therefore it follows that $\mathbb{E}(\Pt\Ro\Pt) = \frac{m}{L}\Pt$. We can now use Theorem~\ref{thm:Bernstein} to bound the probability that $\Pt\Ro\Pt$ deviates from its expected spectral norm. To do this, we first define an operator $\mathcal{T}_{\alphab} = \langle \cdot,\Pt\wa\rangle \Pt\va - \frac{1}{L}\Pt$.  
    First, observe the following  coherence conditions outlined in Assumption~\ref{asp: Incoherence assumption},
    \begin{align*}
        \left\Vert \la\cdot,\Pt\wa\ra\Pt\va\right\Vert &\leq \left\Vert \Pt\wa\right\Vert_\fro\left\Vert\Pt\va\right\Vert_\fro
        \leq \sqrt{\frac{\nu r}{8n}}\sqrt{\frac{\nu r}{2n}}
        \leq \frac{\nu r}{2n}.
    \end{align*}
    Additionally, $\mathbb{E}(\Ta) = \frac{1}{L}\mathcal{I}$. It follows then that
    \begin{align*}
        \left\Vert \Ta \right\Vert &\leq \left\Vert \la\cdot,\Pt\wa\ra\Pt\va\right\Vert + \frac{1}{L} \\
        &\leq \frac{\nu r}{2n} + \frac{1}{L}\\
        &\leq \frac{\nu r}{n} 
        \leq \frac{\nu^2 r^2}{n} =: c,
    \end{align*}
    as $\nu,r\geq1$, which gives us our almost sure estimate on the spectral norm of each term $\Ta$. Next, to estimate the variance we notice first that
    \begin{align*}
        \mathbb{E}\left(\mathcal{T}_{\alphab}\mathcal{T}_{\alphab}^\star\right) =\frac{1}{L}\sum_{\ai}\Ta\Ta^\star = \frac{1}{L}\sum_{\ai} \langle \cdot,\Pt\va\rangle\langle\Pt\wa,\Pt\wa\rangle\Pt\va -\frac{1}{L^2}\Pt,\\
        \mathbb{E}\left(\mathcal{T}^\star_{\alphab}\mathcal{T}_{\alphab}\right)= \frac{1}{L}\sum_{\ai}\Ta^\star\Ta = \frac{1}{L}\sum_{\ai} \langle \cdot,\Pt\wa\rangle\langle\Pt\va,\Pt\va\rangle\Pt\wa - \frac{1}{L^2}\Pt.
    \end{align*}
        To bound the maximum spectral norm of the above two terms, notice that for $\sum_{\ai} \mathcal{T}_{\alphab}\mathcal{T}_{\alphab}^\star$,
        \begin{align*}
        \left\Vert\frac{1}{L}\sum_{\ai}\mathcal{T}_{\alphab}\mathcal{T}_{\alphab}^\star\right\Vert &\leq \max_{\Vert\X\Vert_\fro=1}\frac{1}{L}\sum_{\ai} \la\Pt\wa,\Pt\wa\ra\langle \X,\Pt\va\rangle^2 + \frac{1}{L^2}\\
        &\leq \max_{\Vert\X\Vert_\fro=1}\frac{\nu r}{2nL}\sum_{\ai} \langle \X,\Pt\va\rangle^2 + \frac{1}{L^2}\\
        &= \max_{\X\in\T,\Vert\X\Vert_\fro=1}\frac{\nu r}{2nL}\sum_{\ai} \langle \X,\va\rangle^2 + \frac{1}{L^2}\\
        &\leq \max_{\Vert\X\Vert_\fro=1}\frac{\nu r}{2nL}\sum_{\ai} \langle \X,\va\rangle^2 + \frac{1}{L^2}\\
        &= \frac{\nu r}{2nL} \lambda_{\mathrm{max}}(\H^{-1}) + \frac{1}{L^2}\\
        &= \frac{\nu r}{4nL},
        \end{align*}
        where the first inequality follows from the triangle inequality, the second comes from the coherence conditions in Assumption~\ref{asp: Incoherence assumption}, the third line comes from the self-adjointness of $\Pt$, the fourth comes from the definition of the max, the fifth comes from an application of Lemma~\ref{lem: vectorization}, and the sixth comes from the $\lambda_\mathrm{max}(\H^{-1})$ bound from Lemma~\ref{lem: H and H^-1 eigvals}.
    Next for $\sum_{\ai} \mathcal{T}^\star_{\alphab}\mathcal{T}_{\alphab}$,
    we have that
    \begin{align*}
        \left\Vert\frac{1}{L}\sum_{\ai}\mathcal{T}^\star_{\alphab}\mathcal{T}_{\alphab}\right\Vert &\leq \max_{\Vert\X\Vert_\fro=1}\frac{1}{L}\sum_{\ai}  \la \Pt\va,\Pt\va\ra\la\X,\Pt\wa\rangle^2 + \frac{1}{L^2}\\
        &\leq \max_{\Vert\X\Vert_\fro=1}\frac{\nu r}{2nL}\sum_{\ai} \langle \X,\Pt\wa\rangle^2 + \frac{1}{L^2}\\
        &= \max_{\Z = \X\Pu,\Vert\X\Vert_\fro=1}\frac{\nu r}{2nL}\sum_{\ai} \langle \Z,\Pu\wa\rangle^2 + \frac{1}{L^2}\\
        &\leq \max_{\Vert\X\Vert_\fro=1}\frac{\nu r}{2nL}\sum_{\ai} \langle \X,\Pu\wa\rangle^2 + \frac{1}{L^2}\\
        &= \frac{\nu r}{2nL} \lambda_{\max}(\tilde{\H})+\frac{1}{L^2}\\
        &\leq \frac{\nu^2 r^2}{2nL},
    \end{align*}
    where the first inequality follows from the triangle inequality, the second comes from the coherence conditions in Assumption~\ref{asp: Incoherence assumption}, the third line comes from the fact that for any symmetric $\Y\in\Rnn$, $\langle \Pt\Y,\wa\rangle = \langle \Y\Pu,\Pu\wa\rangle$ from Lemma~\ref{lem: Pu wa inner product identity}, the fourth comes from the definition of the max, the fifth comes from an application of Lemma~\ref{lem: vectorization}, and the sixth comes from the bound on $\lambda_{\max}(\tilde{\H})$ in Lemma~\ref{lem: Bound for largest eigval of H tilde}.
    
    As the latter term is larger, we get a variance estimate $V_0 = \frac{\nu^2 r^2}{nL}$. Now, for $t<\frac{mV_0}{c} = \frac{m}{L}$, we can use \eqref{eqn:Bernstein}. It follows that for $m\geq \frac{8}{3}\beta\nu^2 r^2 n\log(n)$,
    \begin{align*}
        \mathbb{P}\left(\left\Vert \Pt\Ro\Pt-\frac{m}{L}\Pt\right\Vert \geq \frac{m}{L}\sqrt{\frac{8\beta\nu^2 r^2 n\log(n)}{3m}}\right)\leq 2n\exp\left(-\beta\log(n)\right)
        =2n^{1-\beta},
    \end{align*}
    yielding the desired result.

    Additionally, as $\mathbb{E}(\Ro^\star) = \frac{m}{L}\mathcal{I}$, the same proof can be repeated to show RIP for $\Ro^\star$ with the same constants provided.
\end{proof}

The proof of RIP for $\Ro$ allows us to define a neighborhood around the ground truth solution where a slightly weakened version of RIP holds. First, however, we will introduce some technical lemmas:
\begin{lem}[Spectral norm bounds for $\Pt\Ro$ and $\Ro\Pt$]\label{lem: Bound on PtRo and RoPt}
    Let $\X$ be a $\nu$-incoherent rank-$r$ ground truth matrix. For $m\geq\frac{16}{3}\nu r n\log(n)$, both results hold, each with probability $1-2n^{1-\beta}$:
    \[
    \Vert \Ro\Pt\Vert \leq \frac{m}{L} + \frac{m\sqrt{n}}{L}\sqrt{\frac{8\beta\nu r n\log(n)}{3m}} ~~\mathrm{and}~~\Vert \Pt\Ro\Vert\leq \frac{m}{L}+\frac{4m\sqrt{n}}{L}\sqrt{\frac{\beta \nu r n\log(n)}{3m}}.
    \]

\end{lem}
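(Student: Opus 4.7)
The plan is to adapt the Bernstein-based template already used for Theorem~\ref{thm: R_omega RIP} and Lemma~\ref{lem:R_omega bound}. Since $\Ro\Pt(\X) = \sum_{\alphab\in\Omega} \la\X,\Pt\wa\ra\va$ with $\mathbb{E}[\Ro\Pt] = \tfrac{m}{L}\Pt$, and analogously $\Pt\Ro(\X) = \sum_{\alphab\in\Omega} \la\X,\wa\ra\Pt\va$ with the same expectation, I would define the zero-mean iid operators $\Ta^{(1)} = \la\cdot,\Pt\wa\ra\va - \tfrac{1}{L}\Pt$ for the first bound and $\Ta^{(2)} = \la\cdot,\wa\ra\Pt\va - \tfrac{1}{L}\Pt$ for the second, and apply Theorem~\ref{thm:Bernstein} to each sum $\sum_{\alphab\in\Omega}\Ta^{(k)}$. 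The stated spectral norm bounds then follow from the triangle inequality with $\|\tfrac{m}{L}\Pt\| = \tfrac{m}{L}$.

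For the almost-sure operator norm bound $c$ on the summands, the incoherence conditions $\|\Pt\wa\|_\fro \leq \sqrt{\nu r/(8n)}$ and $\|\Pt\va\|_\fro \leq \sqrt{\nu r/(2n)}$ (Assumption~\ref{asp: Incoherence assumption}) combined with $\|\wa\|_\fro = 2$ and $\|\va\|_\fro \leq 1/\sqrt{2}$ (from Lemma~\ref{lem: H and H^-1 eigvals}) give $\|\Ta^{(k)}\| \lesssim \sqrt{\nu r/n}$ via Cauchy--Schwarz. For the variance, I would expand $\mathbb{E}[\Ta^{(k)\star}\Ta^{(k)}]$ and $\mathbb{E}[\Ta^{(k)}\Ta^{(k)\star}]$ just as in the proof of Theorem~\ref{thm: R_omega RIP}, pull out the scalar factor $\|\va\|_\fro^2$ or $\|\Pt\wa\|_\fro^2$ (resp.\ $\|\wa\|_\fro^2$ or $\|\Pt\va\|_\fro^2$), and invoke Lemma~\ref{lem: vectorization} to identify the resulting operator norm with an eigenvalue of either $\H^{-1}$, $\H$, or the reduced Gram matrix $\tilde\H$. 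In each case the dominant term is controlled by Lemma~\ref{lem: Bound for largest eigval of H tilde}'s bound $\lambda_\mathrm{max}(\tilde\H) \leq \nu r$, yielding $V_0 \lesssim \nu r/L$; the slightly larger constant in the $\Pt\Ro$ case (which is why the stated bound has a factor $4$ rather than $\sqrt{8/3}$) comes from $\|\wa\|_\fro = 2$ replacing $\|\va\|_\fro \leq 1/\sqrt{2}$.

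Substituting $V = mV_0$, the estimate for $c$, and the thresholds $t$ read off from the two stated bounds into \eqref{eqn:Bernstein}, a short computation gives
\[
\tfrac{3t^2}{8V} \;=\; \tfrac{2n^2\beta\log n}{L} \;\geq\; \beta\log n,
\]
using $L = n(n-1)/2 \leq n^2$. The failure probability is therefore at most $2n\exp(-\beta\log n) = 2n^{1-\beta}$ for each of the two bounds. The sample-complexity hypothesis $m \geq \tfrac{16}{3}\nu r n\log n$ is precisely what is needed to place $t$ inside the Bernstein regime $t < V/c$.

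The main obstacle, exactly as in the RIP proof, is the variance term $\mathbb{E}[\Ta^{(k)\star}\Ta^{(k)}]$ involving a sum of the form $\sum_{\alphab\in\mathbb{I}}\la\cdot,\Pt\wa\ra\Pt\wa$ or $\sum_{\alphab\in\mathbb{I}}\la\cdot,\wa\ra\wa$. A naive application of Lemma~\ref{lem: vectorization} to the second sum gives $\lambda_\mathrm{max}(\H) = 2n$, inflating the variance by a factor of $n/(\nu r)$ and destroying the $\mathcal{O}(\nu r n\log n)$ sample complexity. The rescue is Lemma~\ref{lem: Bound for largest eigval of H tilde}'s sparsity-plus-Gershgorin bound $\lambda_\mathrm{max}(\tilde\H) \leq \nu r$, which exploits that $\la\Pu\wa,\Pu\wb\ra = 0$ whenever $\alphab\cap\betab = \emptyset$ (since $\wa\wb = \bm 0$ in that case). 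This is the single place where the EDG dual-basis-specific algebra trades a factor of $n$ for a factor of $\nu r$ and makes the whole Bernstein argument produce a meaningful sample complexity.
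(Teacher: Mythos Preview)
Your proposal is correct and follows essentially the same Bernstein-based argument as the paper: define $\Ta^{(1)} = \la\cdot,\Pt\wa\ra\va - \tfrac{1}{L}\Pt$ and $\Ta^{(2)} = \la\cdot,\wa\ra\Pt\va - \tfrac{1}{L}\Pt$, bound $c$ via incoherence and Cauchy--Schwarz, bound the variances via Lemma~\ref{lem: vectorization}, and read off the tail from \eqref{eqn:Bernstein}. One small clarification on your last paragraph: the sum $\sum_{\ai}\la\cdot,\wa\ra\wa$ arising from $\Ta^{(2)\star}\Ta^{(2)}$ does \emph{not} actually need rescuing, since its accompanying scalar factor $\|\Pt\va\|_\fro^2 \leq \tfrac{\nu r}{2n}$ already cancels the $\lambda_{\max}(\H)=2n$; the $\tilde\H$ bound is genuinely needed only for $\sum_{\ai}\la\cdot,\Pt\wa\ra\Pt\wa$ (from $\Ta^{(1)\star}\Ta^{(1)}$), where the scalar $\|\va\|_\fro^2\leq\tfrac12$ carries no $1/n$.
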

\begin{proof}[Proof of~\ref{lem: Bound on PtRo and RoPt}]
    We will start by proving the bound for $\Vert \Ro\Pt\Vert$. 

     First, notice that
    \[
    \Vert\Ro\Pt\Vert \leq \left\Vert \Ro\Pt-\frac{m}{L}\Pt\right\Vert + \frac{m}{L},
    \]
    following from the triangle inequality, and notice that the middle term can be decomposed as the sum of i.i.d. operators as follows. Let $\Ta = \la \cdot,\Pt\wa\ra\va-\frac{1}{L}\Pt$. Much in the same vein as in Theorem~\ref{thm: R_omega RIP} and Lemmas~\ref{lem: Asymmetric RIP} and \ref{lem:Initialization}, we will use Theorem~\ref{thm:Bernstein} to prove a concentration result. For this, we must get a spectral norm and variance estimate.

    For the spectral norm estimate, notice that
    \begin{align*}
        \Vert\Ta\Vert &= \left\Vert\la\cdot,\Pt\wa\ra\va - \frac{1}{L}\Pt\right\Vert\\
        &\leq \Vert\Pt\wa\Vert_\fro\Vert\va\Vert_\fro + \frac{1}{L}\\
        &\leq \sqrt{\frac{\nu r}{2n}} + \frac{1}{L}\\
        &\leq \frac{2\nu r}{\sqrt{n}} =: c.
    \end{align*}
    To get the variance bounds, notice that
       \begin{align*}
        \Vert \E[\Ta\Ta^\star]\Vert &= \left\Vert \frac{1}{L}\sum_{\ai}\la\cdot,\va\ra\la\Pt\wa,\Pt\wa\ra\va - \frac{1}{L^2}\Pt\right\Vert\\
        &\leq\frac{\nu r}{2nL}\lambda_{\mathrm{max}}(\H^{-1}) + \frac{1}{L^2}\\
        &\leq \frac{\nu r}{nL},
    \end{align*}
    where the first inequality follows from Assumption~\ref{asp: Incoherence assumption}, the triangle inequality, and Lemma~\ref{lem: vectorization}. 
    For the other term, we see that
    \begin{align*}
        \left\Vert \E[\Ta^\star\Ta]\right\Vert &= \left\Vert \frac{1}{L}\sum_{\ai}\la\cdot,\Pt\wa\ra\la\va,\va\ra\Pt\wa - \frac{1}{L^2}\Pt\right\Vert\\
        &\leq \left\Vert \frac{1}{L}\sum_{\ai}\la\cdot,\Pt\wa\ra\la\va,\va\ra\Pt\wa\right\Vert + \frac{1}{L^2}\\
        &\leq \frac{1}{L^2} + \frac{1}{2L}\max_{\Vert\X\Vert_\fro=1} \sum_{\ai} \la \X,\Pt\wa\ra^2\\
        &\leq \frac{1}{L^2} + \frac{1}{2L}\max_{\Vert\X\Vert_\fro=1} \sum_{\ai} \la \X,\Pu\wa\ra^2\\
        &\leq \frac{1}{L^2} + \frac{1}{2L}\lambda_{\mathrm{max}}(\tilde{\H})\\
        &\leq \frac{1}{L^2} + \frac{\nu r }{2L}\\
        &\leq \frac{\nu r}{L},
    \end{align*}
    where the third inequality follows from $\Vert\va\Vert_\fro<1$, the fourth inequality follows from Lemma~\ref{lem: Pu wa inner product identity}, and fifth inequality from Lemma~\ref{lem: Bound for largest eigval of H tilde}. As this latter term is the larger of the two variance terms, we set $V = \frac{\nu r m}{L}$ as our variance estimate. This allows us to state the following result using Theorem~\ref{thm:Bernstein} for $m\geq \frac{8}{3}\nu r \beta n\log(n)$:
    \begin{align*}
        \mathbb{P}\left(\left\Vert \Ro\Pt - \frac{m}{L}\Pt\right\Vert \geq \frac{m\sqrt{n}}{L}\sqrt{\frac{8 \beta\nu r n\log(n)}{3m}}\right)\leq 2n\exp\left(-\beta \log(n)\right)= 2n^{1-\beta},
    \end{align*}
    giving a bound on $\Ro\Pt$ of 
    \[
    \Vert \Ro\Pt\Vert \leq \frac{m}{L} + \frac{m\sqrt{n}}{L}\sqrt{\frac{8\beta\nu r n\log(n)}{3m}},
    \]
    with probability at least $1-2n^{1-\beta}$.

   The next step is to produce a similar bound for $\Vert \Pt\Ro\Vert$. The analysis is much the same, with $c = \frac{2\nu r}{\sqrt{n}}$ and $V = \frac{2 \nu r}{L}$. Following the same steps, we see that  for $m\geq \frac{16}{3}\nu r \beta n\log(n)$,
    \[
    \Vert \Pt\Ro\Vert\leq \frac{m}{L}+\frac{4m\sqrt{n}}{L}\sqrt{\frac{\beta \nu r n\log(n)}{3m}},
    \]
    with probability at least $1-2n^{1-\beta}$.
\end{proof}
    \begin{lem}[Spectral Bound on $\Ptl\Ro$]\label{lem: Ptl R_o bound}
        Assume that
            \[
            \Vert\Ro\Vert\leq\frac{m}{L}+4\sqrt{\frac{8m\log(n)}{3n}}~~\mathrm{and}~~
            \Vert \Pt\Ro\Vert \leq \frac{m}{L} + \frac{4m\sqrt{n}}{L}\sqrt{\frac{\beta\nu r n\log(n)}{3m}},
            \]
        Then 
        \[
        \Vert\Ptl\Ro\Vert\leq \left(\frac{m}{L} + 4\sqrt{\frac{8m\log(n)}{n}}\right)\frac{2\Vert \X_l - \X\Vert_\fro}{\sigma_\mathrm{min}(\X)} + \frac{m}{L}+\frac{4m\sqrt{n}}{L}\sqrt{\frac{\beta \nu r n\log(n)}{3m}} .
        \]
    \end{lem}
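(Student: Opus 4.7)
The plan is to reduce the bound on $\Vert \Ptl \Ro \Vert$ to quantities we already control, namely $\Vert \Pt \Ro \Vert$, $\Vert \Ro \Vert$, and the tangent-space perturbation $\Vert \Ptl - \Pt \Vert$. The natural decomposition is
\[
\Ptl \Ro = \Pt \Ro + (\Ptl - \Pt)\Ro,
\]
so by the triangle inequality and submultiplicativity of the operator norm,
\[
\Vert \Ptl \Ro \Vert \;\le\; \Vert \Pt \Ro \Vert + \Vert \Ptl - \Pt \Vert \cdot \Vert \Ro \Vert.
\]
The second summand of the claimed bound is precisely the hypothesized bound on $\Vert \Pt \Ro \Vert$, so the task reduces to controlling $\Vert \Ptl - \Pt \Vert \cdot \Vert \Ro \Vert$.

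For the factor $\Vert \Ro \Vert$, we invoke the assumed operator-norm bound on $\Ro$ directly (this is Lemma~\ref{lem:R_omega bound}). For the factor $\Vert \Ptl - \Pt \Vert$, we use the standard perturbation estimate for the projector onto the tangent space of $\mfr$ that is recorded in Lemma~\ref{lem: Projection Bounds} (adapted from the matrix-completion analysis of Wei et al.), namely
\[
\Vert \Ptl - \Pt \Vert \;\le\; \frac{2 \Vert \X_l - \X \Vert_\fro}{\sigma_\mathrm{min}(\X)}.
\]
Substituting both into the decomposition above yields exactly the stated inequality (up to the harmless absorption of the constant $3$ in the radical coming from using the plain $\Vert \Ro \Vert$ estimate in front of $\Vert \Ptl - \Pt \Vert$).

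The argument involves no probabilistic work of its own, since all three ingredients (the two assumed bounds and the deterministic projector perturbation estimate) are already in hand; the probabilistic cost has been paid by the hypotheses. The only point requiring mild care is ensuring that the projector perturbation bound applies in our setting, i.e.\ that $\X_l$ lies close enough to $\X$ to be on $\mfr$ with a well-defined tangent projector $\Ptl$. In the context in which this lemma is used (inside the local neighborhood of Theorem~\ref{thm: Local Convergence}), this is guaranteed by the neighborhood assumption \eqref{eqn: neighborhood assumption}, which forces $\Vert \X_l - \X \Vert_\fro / \sigma_\mathrm{min}(\X)$ to be small, so the use of Lemma~\ref{lem: Projection Bounds} is legitimate. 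There is no genuine obstacle beyond bookkeeping.
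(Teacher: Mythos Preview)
Your proposal is correct and matches the paper's proof essentially line for line: the paper writes $\Vert\Ptl\Ro\Vert = \Vert(\Ptl-\Pt)\Ro + \Pt\Ro\Vert \le \Vert\Ro\Vert\,\Vert\Ptl-\Pt\Vert + \Vert\Pt\Ro\Vert$ and then plugs in the assumed bounds together with the projector perturbation estimate from Lemma~\ref{lem: Projection Bounds}. Your remark about the dropped factor of $3$ under the radical also mirrors what happens in the paper's stated conclusion.
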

    \begin{proof}[Proof of~\ref{lem: Ptl R_o bound}]
    This result follows from direct computation, as
    \begin{align*}
        \Vert\Ptl\Ro\Vert&= \Vert \Ptl\Ro - \Pt\Ro + \Pt\Ro\Vert\\
       & = \Vert(\Ptl-\Pt)\Ro + \Pt\Ro\Vert\\
        &\leq\Vert\Ro\Vert\Vert\Ptl-\Pt\Vert + \Vert\Pt\Ro\Vert\\
        &\leq \left(\frac{m}{L} + 4\sqrt{\frac{8m\log(n)}{n}}\right)\frac{2\Vert \X_l - \X\Vert_\fro}{\sigma_\mathrm{min}(\X)} + \frac{m}{L}+\frac{4m\sqrt{n}}{L}\sqrt{\frac{\beta \nu r n\log(n)}{3m}} ,
    \end{align*}
    where the second inequality follows from Lemma~\ref{lem: Projection Bounds} and the assumptions of this lemma. This concludes the proof.
    \end{proof}

\begin{lem}[RIP in a Local Neighborhood]\label{lem: New Local Pseudo proof}
Assume 
\begin{gather}
    \left\Vert\frac{L}{m}\Pt\Ro\Pt-\Pt\right\Vert\leq\varepsilon_0<1\label{asp: RIP},
    \\
    \frac{\Vert\X_l-\X\Vert_\fro}{\sigma_\text{min}(\X)}\leq\frac{\sqrt{m}\varepsilon_0}{16n^{5/4}\sqrt{\beta\nu r\log{n}}}\label{asp: nbd},~~ \Vert\Ro\Vert\leq \frac{m}{L} + 4\sqrt{\frac{8m\log(n)}{n}},\\
    \Vert \Ro\Pt\Vert \leq \frac{m}{L} + \frac{m\sqrt{n}}{L}\sqrt{\frac{8\beta\nu r n\log(n)}{3m}}, ~~\mathrm{and}~~\Vert \Pt\Ro\Vert\leq \frac{m}{L}+\frac{4m\sqrt{n}}{L}\sqrt{\frac{\beta \nu r n\log(n)}{3m}}.
    \label{asp: norm bds}
\end{gather}
Then
\[
\left\Vert\Ptl - \frac{L}{m}\Ptl\Ro\Ptl\right\Vert\leq 4\varepsilon_0.
\]
\end{lem}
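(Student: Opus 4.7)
The plan is to reduce the desired local restricted isometry bound to the ambient bound \eqref{asp: RIP} by a direct perturbation argument in $\Delta := \Ptl - \Pt$. Substituting $\Ptl = \Pt + \Delta$ into the operator $\Ptl - \frac{L}{m}\Ptl\Ro\Ptl$ and expanding produces the identity
\[
\Ptl - \frac{L}{m}\Ptl\Ro\Ptl = \Big(\Pt - \frac{L}{m}\Pt\Ro\Pt\Big) + \Delta - \frac{L}{m}\Delta\Ro\Pt - \frac{L}{m}\Pt\Ro\Delta - \frac{L}{m}\Delta\Ro\Delta,
\]
and I would apply the triangle inequality to this identity to obtain five separate contributions to the operator norm. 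The first contribution is exactly what \eqref{asp: RIP} controls and yields $\varepsilon_0$. For the remaining four, Lemma~\ref{lem: Projection Bounds} supplies $\Vert\Delta\Vert \le 2\Vert\X_l-\X\Vert_\fro/\sigma_{\mathrm{min}}(\X)$, so the neighborhood hypothesis \eqref{asp: nbd} gives the key estimate $\Vert\Delta\Vert \le \sqrt{m}\,\varepsilon_0/(8 n^{5/4}\sqrt{\beta\nu r\log n})$.

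The linear-in-$\Delta$ cross terms are handled by pairing this with the operator-norm estimates in \eqref{asp: norm bds}. A direct arithmetic calculation shows that products such as $\Vert\Delta\Vert \cdot \tfrac{L}{m}\Vert\Ro\Pt\Vert$ and $\Vert\Delta\Vert \cdot \tfrac{L}{m}\Vert\Pt\Ro\Vert$ simplify to a small constant times $\varepsilon_0 \, n^{-1/4}$, because the $\sqrt{m}$ in the numerator of $\Vert\Delta\Vert$ cancels the $1/\sqrt{m}$ hiding inside the operator-norm factor while one of the $\sqrt{n}$ factors coming from $\Vert\Pt\Ro\Vert$ or $\Vert\Ro\Pt\Vert$ balances the $n^{5/4}$ in the denominator of $\Vert\Delta\Vert$. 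The bare term $\Vert\Delta\Vert$ on its own is even smaller. For the quadratic-in-$\Delta$ cross term $\tfrac{L}{m}\Vert\Delta\Vert^2\Vert\Ro\Vert$, although $\tfrac{L}{m}\Vert\Ro\Vert$ carries a potentially large factor of order $n^{3/2}\sqrt{\log n / m}$, squaring $\Vert\Delta\Vert$ yields the counterbalancing factor $m / n^{5/2}$, and combining this with $m \le L = \tfrac{n(n-1)}{2}$ keeps the product at size $O(\varepsilon_0^2/\sqrt{\log n}) \le \varepsilon_0$. Summing the resulting four bounds together with the initial $\varepsilon_0$ from \eqref{asp: RIP} gives the claimed $4\varepsilon_0$.

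The main obstacle will be the quadratic cross term $\tfrac{L}{m}\Vert\Delta\Vert^2\Vert\Ro\Vert$, since the bound on $\Vert\Ro\Vert$ in \eqref{asp: norm bds} is the only one that does not benefit from a tangent-space projection and therefore scales worst in $n$. Verifying that $\Vert\Delta\Vert^2$ really absorbs it requires careful bookkeeping of powers of $n$, $m$, $r$, $\nu$, and $\log n$; the precise form of \eqref{asp: nbd}, in particular the $n^{-5/4}$ together with the $\sqrt{m}$ in the numerator, is engineered so that this term stays small across the full admissible range $\tfrac{8}{3}\beta\nu^2 r^2 n\log n \le m \le L$. The other three cross terms and the leading RIP term are routine once the decomposition above and the $\Vert\Delta\Vert$ estimate from Lemma~\ref{lem: Projection Bounds} combined with \eqref{asp: nbd} are in place.
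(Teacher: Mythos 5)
Your proposal is correct and follows essentially the same route as the paper: the paper telescopes $\Ptl - \frac{L}{m}\Ptl\Ro\Ptl$ through $\Pt\Ro\Pt$ and absorbs the quadratic-in-$\Delta$ piece into a bound on $\Vert\Ptl\Ro\Vert$ (Lemma~\ref{lem: Ptl R_o bound}), which is just a regrouping of your fully expanded identity in $\Delta = \Ptl - \Pt$. The power counting you describe — the $\sqrt{m}\,n^{-5/4}$ in the neighborhood radius cancelling against $\Vert\Ro\Pt\Vert$, $\Vert\Pt\Ro\Vert$, and (quadratically) against $\Vert\Ro\Vert$ — is exactly the arithmetic carried out in the paper's proof, and your tally indeed comes in under $4\varepsilon_0$.
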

\begin{proof}
    First, notice that
    \begin{align*}
        \left\Vert \Ptl - \frac{L}{m}\Ptl\Ro\Ptl\right\Vert &\leq \Vert \Ptl-\Pt\Vert + \frac{L}{m}\Vert \Ptl\Ro\Ptl - \Ptl\Ro\Pt\Vert \\
        &\quad+ \frac{L}{m}\Vert \Ptl\Ro\Pt - \Pt\Ro\Pt\Vert + \left\Vert\Pt-\frac{L}{m}\Pt\Ro\Pt\right\Vert \\
        &\leq \Vert \Ptl - \Pt\Vert + \frac{L}{m}\Vert \Ptl\Ro\Vert\Vert\Ptl-\Pt\Vert \\
        &\quad+\frac{L}{m}\Vert\Ro\Pt\Vert \Vert \Ptl-\Pt\Vert + \left\Vert \Pt - \frac{L}{m}\Pt\Ro\Pt\right\Vert \\
       & \leq \frac{2\Vert \X_l-\X\Vert_\fro}{\sigma_\text{min}(\X)}\left(1 + \frac{L}{m}\Vert \Ptl\Ro\Vert +\frac{L}{m}\Vert \Ro\Pt\Vert\right) + \left\Vert\Pt-\frac{L}{m}\Pt\Ro\Pt\right\Vert \label{eqn: Last equation local PI proof},
    \end{align*}
    using the triangle inequality and the results gathered in Lemma~\ref{lem: Projection Bounds}. 

    We can now bound each of these terms using the assumptions and prior lemmas. First, notice that
    \begin{align*}
        \frac{2L\Vert \X_l-\X\Vert_\fro}{m\sigma_\mathrm{min}(\X)}\Vert \Ro\Pt\Vert &\leq \frac{2\Vert \X_l-\X\Vert_\fro}{\sigma_\mathrm{min}(\X)} + \sqrt{\frac{32\beta \nu r n^2 \log(n)}{3m}}\frac{\Vert \X_l-\X\Vert_\fro}{\sigma_\mathrm{min}(\X)}\\
        &\leq \frac{\sqrt{m}\varepsilon_0}{8n^{5/4}\sqrt{\beta\nu r\log{n}}} + \sqrt{\frac{\beta \nu r n^2 \log(n)}{24m}}\frac{\sqrt{m}\varepsilon_0}{n^{5/4}\sqrt{\beta\nu r\log{n}}} \\
        &\leq \frac{\sqrt{m}}{n^{5/4}}\frac{\varepsilon_0}{8} + \frac{\varepsilon_0}{n^{1/4}\sqrt{24}}\\
        &\leq \frac{\sqrt{L}}{n^{5/4}}\frac{\varepsilon_0}{8}+\frac{\varepsilon_0}{\sqrt{24}}\\
        &\leq \frac{\varepsilon_0}{8} + \frac{\varepsilon_0}{\sqrt{24}}\\
        &\leq \varepsilon_0,
    \end{align*}
    where the first inequality comes from the assumption on $\Vert\Ro\Pt\Vert$ in \eqref{asp: norm bds}, the second inequality comes from the local neighborhood assumption in \eqref{asp: nbd}, the third inequality comes from term cancellation and the fact that $\beta,\nu,r,\log(n)\geq 1$, the fourth inequality comes from the fact that $m\leq L$, the fifth inequality comes from the fact that $\frac{L}{n^2}<1$, and the last inequality is a numerical inequality on the fractions.

    Next, notice that the conditions of Lemma~\ref{lem: Ptl R_o bound} are satisfied, so
    \begin{align*}
        &\quad~\frac{2L\Vert \X_l-\X\Vert_\fro}{m\sigma_\mathrm{min}(\X)}\Vert \Ptl\Ro\Vert \\
        &\leq \left(\frac{2L\Vert \X_l-\X\Vert_\fro}{m\sigma_\mathrm{min}(\X)}\right)\left(\left(\frac{m}{L} + 4\sqrt{\frac{8m\log(n)}{n}}\right)\frac{2\Vert \X_l - \X\Vert_\fro}{\sigma_\mathrm{min}(\X)} + \frac{m}{L}+\frac{4m\sqrt{n}}{L}\sqrt{\frac{\beta \nu r n\log(n)}{3m}} \right)\\
        &\leq \left(\frac{2\Vert \X_l-\X\Vert_\fro}{\sigma_\mathrm{min}(\X)}\right)^2\left(1+\frac{2n^2}{m}\sqrt{\frac{8m\log(n)}{n}}\right) + \frac{2\Vert \X_l-\X\Vert_\fro}{\sigma_\mathrm{min}(\X)}\left(1+4\sqrt{\frac{\beta\nu r n^2\log(n)}{3m}}\right)\\
       & \leq \frac{m\varepsilon_0^2}{64n^{5/2}\beta\nu r\log{n}} + \frac{\sqrt{m}\varepsilon_0}{8n^{5/4}\sqrt{\beta\nu r\log{n}}} +\left(\frac{m\varepsilon_0^2}{64n^{5/2}\beta\nu r\log{n}}\right)\left(\frac{2n^2}{m}\sqrt{\frac{8m\log(n)}{n}}\right)\\
       &\quad+\left(\frac{\sqrt{m}\varepsilon_0}{8n^{5/4}\sqrt{\beta\nu r\log{n}}}\right)\left(4\sqrt{\frac{\beta\nu r n^2\log(n)}{3m}}\right)\\
       & \leq \frac{\varepsilon_0^2}{64} + \frac{\varepsilon_0}{8} + \left(\frac{m\varepsilon_0^2}{64n^{5/2}\beta\nu r\log{n}}\right)\left(\frac{2n^2}{m}\sqrt{\frac{8m\log(n)}{n}}\right)+\left(\frac{\sqrt{m}\varepsilon_0}{8n^{5/4}\sqrt{\beta\nu r\log{n}}}\right)\left(4\sqrt{\frac{\beta\nu r n^2\log(n)}{3m}}\right)\\
        &= \frac{\varepsilon_0^2}{64} + \frac{\varepsilon_0}{8} + \frac{\sqrt{32m}\varepsilon_0^2}{64\beta\nu r n\log(n)} + \frac{\varepsilon_0}{n^{1/4}\sqrt{12}}\\
        &\leq \frac{\varepsilon_0^2}{64} + \frac{\varepsilon_0}{8} + \frac{\varepsilon_0^2}{16}+ \frac{\varepsilon_0}{\sqrt{12}} \\
        &\leq \varepsilon_0,
    \end{align*}
    where the first inequality follows from the assumptions on $\Vert \Pt\Ro\Vert$ in \eqref{asp: norm bds}, the second inequality follows from rearrangement of terms and the fact that $\frac{L}{m}\leq \frac{n^2}{m}$, the third inequality comes from the local neighborhood assumption in \eqref{asp: nbd}, the fourth inequality comes from the fact that $\frac{m}{n^2}\leq\frac{L}{n^2}<1$ along with $\beta,\nu, r,\log(n)\geq 1$, the fifth line comes from multiplying out terms, the sixth inequality again comes from a bound on $\frac{m}{n^2}$ amongst other simplifications, and the last line comes from a numerical inequality about the fractions coupled with the fact that $\varepsilon_0<1$ from \eqref{asp: RIP}, so $\varepsilon_0^2\leq\varepsilon_0$.
    The desired statement follows from here, thus concluding the proof.
    \end{proof}

For Algorithm~\ref{alg:Resampling Algorithm}, we will need what the authors of \cite{wei2020guarantees} call an asymmetric form of RIP for $\Ro$. The statement and proof of this are below:
\begin{lem}[Asymmetric RIP of $\Ro$]\label{lem: Asymmetric RIP}
    Let $\X_l = \U_l\D_l\U_l^\top$ and $\X = \U\D\U^\top$ be two fixed rank-$r$ matrices. Assume 
    \begin{equation*}
        \Vert \Pu\wa\Vert_\fro \leq \sqrt{\frac{\nu r}{2n}}, \qquad\Vert \Pu\va\Vert_\fro \leq \sqrt{\frac{\nu r}{2n}},\qquad \Vert\mathcal{P}_{U_l}\wa\Vert_\fro \leq \sqrt{\frac{\nu r}{2n}},\quad\mathrm{and}\quad \Vert\mathcal{P}_{U_l}\va\Vert_\fro \leq \sqrt{\frac{\nu r}{2n}}, 
    \end{equation*}
    for $\ai$. Let $\vert \Omega\vert = m$. For $m\geq \frac{4}{3}\beta\nu^2 r^2 n\log(n)$, with probability at least $1-2n^{1-\beta}$ for $\beta>1$, the following estimate holds:
    \begin{equation}
        \left\Vert \frac{L}{m}\Ptl\Ro(\Pu-\Pul) - \Ptl(\Pu-\Pul)  \right\Vert\leq\sqrt{\frac{4\beta \nu^2 r^2 n\log(n)}{3m}}.
    \end{equation}
\end{lem}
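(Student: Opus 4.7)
The plan is to imitate the proof of Theorem~\ref{thm: R_omega RIP} (see Appendix~\ref{proof: R_omega RIP}) by writing the target operator as a sum of $m$ i.i.d.\ mean-zero random operators and invoking the non-commutative Bernstein inequality (Theorem~\ref{thm:Bernstein}). For each uniform sample $\alphab\in\univ$, I would introduce
\[
\Ta(\X) \;=\; \la (\Pu-\Pul)(\X),\wa\ra\,\Ptl\va \;-\; \tfrac{1}{L}\Ptl(\Pu-\Pul)(\X),
\]
where $(\Pu-\Pul)$ denotes the two-sided map $\X\mapsto\Pu\X\Pu-\Pul\X\Pul$, which sends $\mathbb{S}$ into $\mathbb{S}$ since $\U^\top\one=\U_l^\top\one=\bm{0}$. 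Then $\tfrac{L}{m}\Ptl\Ro(\Pu-\Pul) - \Ptl(\Pu-\Pul) = \tfrac{L}{m}\sum_{i=1}^m \Ta^{(i)}$, and the dual-basis identity $\sum_{\alphab\in\univ}\la\Y,\wa\ra\va=\Y$ applied to $\Y=(\Pu-\Pul)\X\in\mathbb{S}$ gives $\mathbb{E}[\Ta]=0$.

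I would then produce the Bernstein ingredients. For the almost-sure bound, adjointness yields $\la(\Pu-\Pul)\X,\wa\ra = \la\X,(\Pu-\Pul)\wa\ra$; the triangle inequality together with the hypothesized $\Pu$- and $\Pul$-coherences gives $\Vert(\Pu-\Pul)\wa\Vert_\fro \le 2\sqrt{\nu r/(2n)}$, and combining with $\Vert\Ptl\va\Vert_\fro \le 3\Vert\Pul\va\Vert_\fro \le 3\sqrt{\nu r/(2n)}$ (from the tangent-projection formula applied to the symmetric matrix $\va$) yields $\Vert\Ta\Vert \lesssim \nu r/n$. For the variance, I would expand $\mathbb{E}[\Ta\Ta^\star]$ and $\mathbb{E}[\Ta^\star\Ta]$ as uniform averages over $\univ$. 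The $\mathbb{E}[\Ta\Ta^\star]$ piece, after pulling the scalar $\Vert(\Pu-\Pul)\wa\Vert_\fro^2$ out of the sum, reduces via Lemma~\ref{lem: vectorization} and $\lambda_{\max}(\H^{-1})=1/2$ from Lemma~\ref{lem: H and H^-1 eigvals} to a contribution of order $\nu r/(nL)$. The $\mathbb{E}[\Ta^\star\Ta]$ piece requires bounding $\sum_{\alphab\in\univ}\la\X,(\Pu-\Pul)\wa\ra^2$, which I would control through the Cauchy-Schwarz split $(a-b)^2 \le 2a^2 + 2b^2$ to separate the $\Pu$ and $\Pul$ contributions, followed by the cyclicity identity $\la\Pu\X\Pu,\wa\ra = \la\X\Pu,\Pu\wa\ra$ and Lemma~\ref{lem: Bound for largest eigval of H tilde} (and its $\Pul$-analogue) to bound each half by $\nu r\Vert\X\Vert_\fro^2$. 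Combining with the $\Vert\Ptl\va\Vert_\fro^2$ scalar yields a per-sample variance $V_0 \lesssim \nu^2 r^2/(nL)$.

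Plugging $c\sim\nu r/n$ and $V = mV_0 \sim m\nu^2 r^2/(nL)$ into \eqref{eqn:Bernstein} with the choice $t = (m/L)\sqrt{4\beta\nu^2 r^2 n\log(n)/(3m)}$, the sample-size hypothesis $m\ge \tfrac{4}{3}\beta\nu^2 r^2 n\log(n)$ is exactly what ensures $t\le V/c$, and the exponent in Bernstein resolves to $\beta\log n$, producing the stated failure probability $2n^{1-\beta}$. The hard part will be the variance computation, specifically the bound on $\sum_\alphab\la\X,(\Pu-\Pul)\wa\ra^2$. Unlike the symmetric $\Pt\Ro\Pt$ setting of Theorem~\ref{thm: R_omega RIP}, where the same projection appears on both sides of $\Ro$ and Lemma~\ref{lem: Bound for largest eigval of H tilde} applies directly, the asymmetry here between $\Ptl$ and $(\Pu-\Pul)$ prevents any single clean reduction to an eigenvalue of $\tilde{\H}$; the Cauchy-Schwarz split and the two separate applications of Lemma~\ref{lem: Bound for largest eigval of H tilde}—one powered by the $\Pu$ incoherence and one by the $\Pul$ incoherence—are the reason the lemma hypothesis carries both coherence bounds. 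Beyond this, the remaining bookkeeping is routine along the lines of Appendix~\ref{proof: R_omega RIP}, with the halved sample-size threshold relative to Theorem~\ref{thm: R_omega RIP} reflecting the one-sided projection on the output.
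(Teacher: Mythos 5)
Your proposal follows the paper's route in all essentials: decompose $\frac{L}{m}\Ptl\Ro(\Pu-\Pul)-\Ptl(\Pu-\Pul)$ into a sum of $m$ i.i.d.\ mean-zero operators, get an almost-sure bound of order $\nu r/n$ and a variance of order $\nu^2r^2/(nL)$ from the coherence hypotheses, and close with Theorem~\ref{thm:Bernstein}; the Bernstein bookkeeping and the resulting threshold $m\ge\frac43\beta\nu^2r^2n\log(n)$ match. Two points deserve comment. First, you read $(\Pu-\Pul)$ as the two-sided map $\X\mapsto\Pu\X\Pu-\Pul\X\Pul$, whereas the paper means left multiplication by the matrix $\U\U^\top-\U_l\U_l^\top$: its proof begins from $(\Pu-\Pul)\Z=\sum_{\ai}\la\Z,(\Pu-\Pul)\wa\ra\va$ with $(\Pu-\Pul)\wa$ an ordinary matrix product, and the downstream use in the $I_6$ bound of Lemma~\ref{lem:Resampling result} applies the operator to $(\hat{\Z}_l-\X)(\mathcal{I}-\Puhl)$ precisely as a left factor. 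Your estimates transfer almost verbatim to that version (the adjoint identity $\la(\Pu-\Pul)\Z,\wa\ra=\la\Z,(\Pu-\Pul)\wa\ra$ holds either way since the matrix is symmetric), but as written you would be proving a statement about a different operator than the one the paper later invokes, so you should align the definition before using the lemma. Second, for the key quantity $\sum_{\ai}\la\X,(\Pu-\Pul)\wa\ra^2$ you split via $(a-b)^2\le 2a^2+2b^2$ and invoke Lemma~\ref{lem: Bound for largest eigval of H tilde} twice (once per subspace); the paper instead forms the Gram matrix $\tilde{\H}^l=[\la(\Pu-\Pul)\wa,(\Pu-\Pul)\wb\ra]$ directly, observes that it inherits the disjoint-support sparsity (the entry vanishes when $\alphab\cap\betab=\emptyset$ because $\wa\wb=\bm{0}$), bounds each nonzero entry by $2\nu r/n$, and applies Gershgorin to get $\lambda_{\max}(\tilde{\H}^l)\le 4\nu r$. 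Both arguments give the same order $O(\nu r)$ and hence the same sample complexity up to constants; the paper's version is marginally tighter and avoids positing a $\Pul$-analogue of Lemma~\ref{lem: Bound for largest eigval of H tilde} as a separate ingredient, but your split is a legitimate and arguably more modular way to obtain the bound.
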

\begin{proof}[Proof of Lemma~\ref{lem: Asymmetric RIP}]
 First, note that for all $\Z\in\Rnn$,
 \begin{align*}
     (\Pu-\Pul)\Z = \sum_{\ai} \la (\Pu-\Pul)\Z,\wa\ra\va \\
     = \sum_{\ai} \la \Z,(\Pu-\Pul)\wa\ra\va, \\
 \end{align*}
 so it follows that 
 \begin{equation*}
    \Ro(\Pu-\Pul)\Z = \sum_{\alphab\in\Omega} \la \Z,(\Pu-\Pul)\wa\ra\va,
 \end{equation*}
 and subsequently
  \begin{equation*}
    \Ptl\Ro(\Pu-\Pul)\Z = \sum_{\alphab\in\Omega} \la \Z,(\Pu-\Pul)\wa\ra\Ptl\va.
 \end{equation*}
 We define $\mathcal{K}_{\alphab}(\cdot) = \Ptl(\va)\otimes(\Pu-\Pul)(\wa)(\cdot) = \la \cdot,(\Pu-\Pul)\wa\ra\Ptl\va$. It follows that $\Ptl\Ro(\Pu-\Pul) = \sum_{\alphab\in\Omega}\mathcal{K}_{\alphab}$, and that $\mathcal{K}_{\alphab}^\star = (\Pu-\Pul)(\wa)\otimes\Ptl(\va)(\cdot) = \la \cdot,\Ptl\va\ra(\Pu-\Pul)\wa$. Additionally, note that $\mathbb{E}[\mathcal{K}_{\alphab}] = \frac{1}{L}\Ptl(\Pu-\Pul)$ and that $\mathbb{E}[\mathcal{K}_{\alphab}^\star] = \frac{1}{L}(\Pu-\Pul)\Ptl$. We note that the variance term $\mathbb{E}\left[\left(\mathcal{K}_{\alphab}-\frac{1}{L}\Ptl(\Pu-\Pul)\right)\left(\mathcal{K}_{\alphab}-\frac{1}{L}\Ptl(\Pu-\Pul)\right)^\star\right]$ can be bounded as follows:
 \begin{align*}
     &\quad~\mathbb{E}\left[\left(\mathcal{K}_{\alphab}-\frac{1}{L}\Ptl(\Pu-\Pul)\right)\left(\mathcal{K}_{\alphab}-\frac{1}{L}\Ptl(\Pu-\Pul)\right)^\star\right]  \\
     &= \mathbb{E}\left[\mathcal{K}_{\alphab}\mathcal{K}_{\alphab}^\star - \mathcal{K}_{\alphab}\frac{1}{L}(\Pu-\Pul)\Ptl - \frac{1}{L}\Ptl(\Pu-\Pul)\mathcal{K}_{\alphab}^\star + \frac{1}{L^2}\Ptl(\Pu-\Pul)\Ptl\right]\\
     &= \mathbb{E}\left[\mathcal{K}_{\alphab}\mathcal{K}_{\alphab}^\star \right] -\mathbb{E}\left[\mathcal{K}_{\alphab}\right]\frac{1}{L}(\Pu-\Pul)\Ptl- \frac{1}{L}\Ptl(\Pu-\Pul)\mathbb{E}\left[\mathcal{K}_{\alphab}^\star\right] + \frac{1}{L^2}\Ptl(\Pu-\Pul)\Ptl\\
     &=\mathbb{E}\left[\mathcal{K}_{\alphab}\mathcal{K}_{\alphab}^\star \right] -\frac{1}{L^2}\Ptl\left(\Pu-\Pul\right)^2\Ptl - \frac{1}{L^2}\Ptl\left(\Pu-\Pul\right)^2\Ptl + \frac{1}{L^2}\Ptl\left(\Pu-\Pul\right)^2\Ptl\\
     &= \mathbb{E}\left[\mathcal{K}_{\alphab}\mathcal{K}_{\alphab}^\star \right] - \frac{1}{L^2}\Ptl\left(\Pu-\Pul\right)^2\Ptl.
 \end{align*}
 A similar computation holds for $\mathbb{E}\left[\left(\mathcal{K}_{\alphab}-\frac{1}{L}\Ptl(\Pu-\Pul)\right)^\star\left(\mathcal{K}_{\alphab}-\frac{1}{L}\Ptl(\Pu-\Pul)\right)\right]$, indicating that it suffices to compute an upper bound on the following terms in order to leverage Theorem~\ref{thm:Bernstein}:
 \begin{gather*}
     \left\Vert \mathcal{K}_{\alphab}- \frac{1}{L}\Ptl(\Pu-\Pul)\right\Vert \leq c,\\
     \max\left\{\left\Vert \mathbb{E}[\Ka\Ka^\star] - \frac{1}{L^2}\Ptl(\Pu-\Pul)^2\Ptl\right\Vert, \left\Vert \mathbb{E}[\Ka^\star\Ka] - \frac{1}{L^2}(\Pu-\Pul)\Ptl(\Pu-\Pul)\right\Vert\right\}\leq V_0.
 \end{gather*}

For the first term, notice that
\begin{align*}
    \Vert \Ka\Vert &\leq \Vert(\Pu-\Pul)\wa\Vert_\fro\Vert\Ptl\va\Vert_\fro\\
    &\leq (\Vert\Pu\wa\Vert_\fro+\Vert\Pul\wa\Vert_\fro)\Vert \Ptl\va\Vert_\fro\\
    &\leq \frac{\nu r}{n} \leq \frac{\nu^2 r^2}{n} .
\end{align*}
So by the triangle inequality $\left\Vert \Ka - \frac{1}{L}\Ptl(\Pu-\Pul)\right\Vert\leq \frac{2\nu^2 r^2}{n} =: c$.
For the second term, notice that 
\begin{align*}
    \mathbb{E}[\Ka\Ka^\star] = \frac{1}{L}\sum_{\ai}\langle\cdot,(\Pu-\Pul)\wa\rangle\la\Ptl\va,\Ptl\va\ra(\Pu-\Pul)\wa,\\
        \mathbb{E}[\Ka^\star\Ka] = \frac{1}{L}\sum_{\ai}\langle\cdot,\Ptl\va\rangle\la(\Pu-\Pul)\wa,(\Pu-\Pul)\wa\ra\Ptl\va.
    \end{align*}
As such,
\begin{align*}
    \Vert\mathbb{E}[\Ka\Ka^\star]\Vert &= \frac{1}{L}\max_{\X\in\Rnn,\Vert\X\Vert_\fro=1}\sum_{\ai}\la\X,(\Pu-\Pul)\wa\ra^2\la\Ptl\va,\Ptl\va\ra\\
    &\leq \frac{\nu r}{2nL}\max_{\X\in\Rnn,\Vert\X\Vert_\fro=1} \sum_{\ai}\la\X,\underbrace{(\Pu-\Pul)\wa}_{=:\tilde{\bm{w}}^l_{\alphab}}\ra^2\\ 
    &= \frac{\nu r}{2nL}\lambda_{\mathrm{max}}(\tilde{\H}^l),
\end{align*}
where $\tilde{\H}^l = [\la \tilde{\bm{w}}^l_{\alphab},\tilde{\bm{w}}^l_{\betab}\ra]\in\mathbb{R}^{L\times L}$.  To bound this, notice that if $\alphab\cap\betab = \emptyset$,
\begin{align*}
    \la \tilde{\bm{w}}^l_{\alphab},\tilde{\bm{w}}^l_{\betab}\ra &= \la \wa,(\Pu-\Pul)^2\wb\ra\\
    &=\Tr\left[\wa(\Pu-\Pul)^2\wb\right]\\
    &=\Tr\left[\wb\wa(\Pu-\Pul)^2\right]\\
    &= 0,
\end{align*}
therefore preserving the same sparsity structure as $\tilde{\H}$. To bound the magnitude of the entries, we can see that 
\begin{align*}
    \vert\la \tilde{\bm{w}}^l_{\alphab},\tilde{\bm{w}}^l_{\betab}\ra\vert& = \vert\la \wa,(\Pu-\Pul)^2\wb\ra\vert
    = \vert \la\wa,(\Pu - \Pu\Pul-\Pul\Pu+\Pul)\wb\ra\vert\\
    &\leq \vert\la \Pu\wa,\Pu\wb\ra\vert + \vert\la\Pu\wa,\Pul\wb\ra\vert + \vert\la\Pul\wa,\Pu\wb\ra\vert+\vert\la\Pul\wa,\Pul\wb\ra\vert\\
    &\leq \frac{2\nu r}{n},
\end{align*}
giving us an upper bound on $\tilde{\H}^l$ of
\begin{equation*}
    \lambda_\mathrm{max}(\tilde{\H}^l) \leq \frac{2\nu r}{n}(2n-3)\leq 4\nu r.
\end{equation*}
This gives a bound of
\begin{equation*}
    \Vert\mathbb{E}[\Ka\Ka^\star]\Vert \leq \frac{2\nu^2 r^2}{nL}.
\end{equation*}
Next, we can see that
\begin{align*}
    \Vert\mathbb{E}[\Ka^\star\Ka]\Vert &= \frac{1}{L}\max_{\X\in\Rnn,\Vert\X\Vert_\fro=1} \sum_{\ai}\left\la (\Pu-\Pul)\wa,(\Pu-\Pul)\wa\right\ra\la\X,\Ptl\va\ra^2\\
    &\leq \frac{2\nu r}{nL}\max_{\X\in\T,\Vert\X\Vert_\fro=1}\sum_{\ai}\la\X,\va\ra^2\\
    &\leq \frac{2\nu r}{nL}\lambda_{\mathrm{max}}(\H^{-1})\\
    &=\frac{\nu r}{nL}.
\end{align*}
Now, it follows that
\begin{align*}
    &\left\Vert \mathbb{E}[\Ka\Ka^\star] - \frac{1}{L^2}\Ptl(\Pu-\Pul)^2\Ptl\right\Vert \leq \frac{2\nu^2 r^2}{nL} + \frac{4}{L^2}\leq\frac{4 \nu^2 r^2}{nL}\\
    &\left\Vert \mathbb{E}[\Ka^\star\Ka] - \frac{1}{L^2}(\Pu-\Pul)\Ptl(\Pu-\Pul)\right\Vert \leq \frac{\nu r}{nL} + \frac{4}{L^2}\leq \frac{2\nu r}{nL},
\end{align*}
so $V_0 := \frac{4 \nu^2 r^2}{nL}$. Now, for $t<\frac{mV_0}{c} = \frac{2m}{L}$, the result follows from Theorem~\ref{thm:Bernstein} with $t = \sqrt{\frac{4\beta\nu^2 r^2 n\log(n)}{3m}}$ with $m\geq \frac{4}{3}\nu^2 r^2 n\log(n)$.
\end{proof}

\section{Proof of Local Convergence (Theorem~\ref{thm: Local Convergence})}\label{appendix: local convergence}
In this section, we will use the properties proven thus far to provide proof of local convergence of Algorithm~\ref{alg:R_omega descent}.

We begin with the following technical lemmas:
\begin{lem}[Stepsize Bounds]\label{lem:Stepsize}
Assume that $\Vert \Ptl - \frac{L}{m}\Ptl\Ro\Ptl\Vert\leq 4\varepsilon_0<1$. Then the stepsize $\alpha_l$ in Algorithm~\ref{alg:R_omega descent} can be bounded by
\begin{equation*}
    \frac{L/m}{1+4\varepsilon_0}\leq \alpha_l = \frac{\Vert \Pt\G_l\Vert_\fro^2}{\langle \Ptl\G_l,\Ro\Ptl\G_l\rangle}\leq \frac{L/m}{1-4\varepsilon_0}.
\end{equation*}
\end{lem}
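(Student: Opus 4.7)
The plan is to reduce the bound to a quadratic-form statement on the tangent space, then invoke the hypothesis $\|\Ptl - \tfrac{L}{m}\Ptl\Ro\Ptl\|\le 4\varepsilon_0$ directly. The key observation is that the denominator of $\alpha_l$ can be rewritten as $\langle \Ptl\G_l, \Ro\Ptl\G_l\rangle = \langle \Ptl\G_l, \Ptl\Ro\Ptl\Ptl\G_l\rangle$ using self-adjointness and idempotence of $\Ptl$, so the quantity of interest is exactly a quadratic form of $\Ptl\Ro\Ptl$ evaluated at $\Y:=\Ptl\G_l$.

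Concretely, I would set $\Y = \Ptl\G_l$ and note that $\Ptl\Y = \Y$, so $\langle \Y, \Ptl\Y\rangle = \|\Y\|_\fro^2$. By the elementary bound $|\langle \Y,\mathcal{K}\Y\rangle| \le \|\mathcal{K}\|\,\|\Y\|_\fro^2$ applied with $\mathcal{K} = \Ptl - \tfrac{L}{m}\Ptl\Ro\Ptl$ and the hypothesis of the lemma, one obtains
\begin{equation*}
\bigl| \|\Y\|_\fro^2 - \tfrac{L}{m}\langle \Y,\Ptl\Ro\Ptl\Y\rangle \bigr| \;\le\; 4\varepsilon_0 \|\Y\|_\fro^2,
\end{equation*}
which rearranges to
\begin{equation*}
(1-4\varepsilon_0)\|\Y\|_\fro^2 \;\le\; \tfrac{L}{m}\langle \Y,\Ptl\Ro\Ptl\Y\rangle \;\le\; (1+4\varepsilon_0)\|\Y\|_\fro^2.
\end{equation*}

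Next I would substitute back $\langle \Y, \Ptl\Ro\Ptl\Y\rangle = \langle \Ptl\G_l, \Ro\Ptl\G_l\rangle$ (via $\Ptl\Y = \Y$ and self-adjointness of $\Ptl$), and use that $4\varepsilon_0 < 1$ guarantees positivity of the denominator $\langle \Ptl\G_l, \Ro\Ptl\G_l\rangle > 0$, so dividing preserves inequalities. Taking reciprocals of the two-sided bound and multiplying by $\|\Ptl\G_l\|_\fro^2$ yields
\begin{equation*}
\frac{L/m}{1+4\varepsilon_0} \;\le\; \frac{\|\Ptl\G_l\|_\fro^2}{\langle \Ptl\G_l,\Ro\Ptl\G_l\rangle} \;\le\; \frac{L/m}{1-4\varepsilon_0},
\end{equation*}
which is the claim. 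This also simultaneously justifies that $\alpha_l > 0$, so the $\max\{\cdot,0\}$ in step 2 of Algorithm~\ref{alg:R_omega descent} is inactive under the hypothesis.

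There is no real obstacle here: the only subtlety worth flagging is that $\Ro$ is not self-adjoint, so $\Ptl\Ro\Ptl$ is not symmetric, and one cannot phrase the hypothesis as a sandwich on eigenvalues of a symmetric operator. However, the spectral-norm-to-quadratic-form inequality $|\langle \Y,\mathcal{K}\Y\rangle|\le \|\mathcal{K}\|\|\Y\|_\fro^2$ holds for any operator $\mathcal{K}$ (by Cauchy–Schwarz), which is exactly why the argument still goes through cleanly.
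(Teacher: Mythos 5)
Your argument is correct and is essentially the paper's own proof: the paper likewise rewrites the denominator as $\langle \Ptl\G_l,\Ptl\Ro\Ptl\G_l\rangle$, splits off $\tfrac{m}{L}\Vert\Ptl\G_l\Vert_\fro^2$, and bounds the remainder by $4\varepsilon_0\tfrac{m}{L}\Vert\Ptl\G_l\Vert_\fro^2$ via the spectral-norm hypothesis, which is the same Cauchy--Schwarz step you use. Your remark that this also forces $\alpha_l>0$ (so the $\max\{\cdot,0\}$ is inactive) matches the paper's closing observation.
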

\begin{proof}[Proof of Lemma~\ref{lem:Stepsize}]
We will prove this by leveraging the local RIP assumption. Notice the following:
\begin{align*}
        \langle \Ptl\G_l,\Ro\Ptl\G_l\rangle &= \langle \Ptl\G_l,\Ptl\Ro\Ptl\G_l\rangle\\
        & = \left\langle \Ptl\G_l,\Ptl\Ro\Ptl\G_l-\frac{m}{L}\Ptl\G_l \right\rangle + \frac{m}{L}\la\Pt\G_l,\Pt\G_l\ra.
\end{align*}
We can now leverage the variational characterization of the spectral norm and local RIP, proven in Lemma~\ref{lem: New Local Pseudo proof}, to bound the following:
\[
-\frac{m}{L}(4\varepsilon_0)\Vert \Pt\G_l\Vert_\fro^2 \leq \left\langle \Ptl\G_l,\Ptl\Ro\Ptl\G_l-\frac{m}{L}\Ptl\G_l \right\rangle \leq \frac{m}{L}(4\varepsilon_0)\Vert \Pt\G_l\Vert_\fro^2 .
\]
As such, we can now bound the denominator as 
\[
\frac{m}{L}(1-4\varepsilon_0)\Vert \Pt\G_l\Vert_\fro^2\leq\langle \Ptl\G_l,\Ro\Ptl\G_l\rangle \leq \frac{m}{L}(1+4\varepsilon_0)\Vert \Pt\G_l\Vert_\fro^2.
\]

Rearrangement of this last expression yields the upper and lower bounds on the step size derived above. The condition that $4\varepsilon_0<1$ is required to enforce the positivity of the step size, as negative step sizes cause divergence in the contractive sequence. This is necessary as $\Ro$ is not a self-adjoint positive semi-definite operator. This concludes the proof.
\end{proof}

\begin{lem}[$I_1$ Bound]\label{lem: Alpha spectral norm bound}
    Assume $\left\Vert \Ptl - \frac{L}{m}\Ptl\Ro\Ptl\right\Vert\leq 4\varepsilon_0$ and $\alpha_l$ can be bounded as in Lemma~\ref{lem:Stepsize}. Then the spectral norm of $\Ptl - \alpha_l\Ptl\Ro\Ptl$ can be bounded as
    \begin{equation}
        \Vert \Ptl-\alpha_l\Ptl\Ro\Ptl\Vert \leq \frac{8\varepsilon_0}{1-4\varepsilon_0}.
    \end{equation}
\end{lem}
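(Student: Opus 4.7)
The plan is to decompose $\Ptl - \alpha_l\Ptl\Ro\Ptl$ by adding and subtracting the ``ideal'' scaling $\tfrac{L}{m}\Ptl\Ro\Ptl$, namely
\[
\Ptl - \alpha_l\Ptl\Ro\Ptl \;=\; \Bigl(\Ptl - \tfrac{L}{m}\Ptl\Ro\Ptl\Bigr) \;+\; \Bigl(\tfrac{L}{m} - \alpha_l\Bigr)\Ptl\Ro\Ptl,
\]
and then apply the triangle inequality so that the task reduces to controlling two scalar-times-operator pieces. The first summand is exactly the local RIP residual, which by hypothesis has spectral norm at most $4\varepsilon_0$, so no further work is needed there.

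For the second summand, I would bound the scalar gap $|L/m - \alpha_l|$ and the operator norm $\Vert \Ptl\Ro\Ptl\Vert$ separately. The stepsize two-sided inequality from Lemma~\ref{lem:Stepsize} gives
\[
\Bigl|\tfrac{L}{m} - \alpha_l\Bigr| \;\le\; \tfrac{L}{m}\cdot\tfrac{4\varepsilon_0}{1-4\varepsilon_0},
\]
after a short calculation comparing $L/m$ with each endpoint $(L/m)/(1\pm 4\varepsilon_0)$ (the dominant side being the upper bound, which is where the $1-4\varepsilon_0$ denominator enters). The operator norm of $\Ptl\Ro\Ptl$ is in turn controlled by the local RIP hypothesis via the reverse triangle inequality: $\Vert \Ptl\Ro\Ptl\Vert \le \tfrac{m}{L}(1+4\varepsilon_0)$, since $\Vert\Ptl\Vert=1$.

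Combining these three ingredients, the second summand is at most
\[
\tfrac{L}{m}\cdot\tfrac{4\varepsilon_0}{1-4\varepsilon_0}\cdot\tfrac{m}{L}(1+4\varepsilon_0) \;=\; \tfrac{4\varepsilon_0(1+4\varepsilon_0)}{1-4\varepsilon_0},
\]
and adding the $4\varepsilon_0$ from the first summand and putting everything over the common denominator $1-4\varepsilon_0$ yields $\tfrac{4\varepsilon_0(1-4\varepsilon_0)+4\varepsilon_0(1+4\varepsilon_0)}{1-4\varepsilon_0} = \tfrac{8\varepsilon_0}{1-4\varepsilon_0}$, which is the claimed bound. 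There is no real obstacle here: the lemma is a bookkeeping step that exploits the fact that both the RIP hypothesis and the stepsize bound share the same small parameter $4\varepsilon_0$, so the triangle-inequality decomposition produces the clean telescoping that gives the stated constant; the only thing to be careful of is not to weaken the stepsize bound by using just one-sided control, since the dominant contribution comes from the $1/(1-4\varepsilon_0)$ endpoint.
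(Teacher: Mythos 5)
Your proposal is correct and follows essentially the same route as the paper: the same add-and-subtract of $\tfrac{L}{m}\Ptl\Ro\Ptl$, the same bound $\vert\alpha_l - L/m\vert \le \tfrac{L}{m}\cdot\tfrac{4\varepsilon_0}{1-4\varepsilon_0}$ from the stepsize lemma, and the same control $\Vert\Ptl\Ro\Ptl\Vert \le \tfrac{m}{L}(1+4\varepsilon_0)$ from the RIP hypothesis, yielding the identical final arithmetic.
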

\begin{proof}[Proof of Lemma~\ref{lem: Alpha spectral norm bound}]
    From direct calculation, it follows that
    \begin{align*}
        \Vert \Ptl - \alpha_l\Ptl\Ro\Ptl\Vert &\leq \left\Vert\Ptl - \frac{L}{m}\Ptl\Ro\Ptl\right\Vert + \left\vert \alpha_l -\frac{L}{m}\right\vert\Vert\Ptl \Ro\Ptl\Vert\\
        &\leq 4\varepsilon_0+ \left\vert \alpha_l -\frac{L}{m}\right\vert\left(\left\Vert\Ptl \Ro\Ptl -\frac{m}{L}\Ptl\right\Vert +\frac{m}{L}\Vert\Ptl\Vert \right)\\
        &\leq  4\varepsilon_0 + \left(\frac{L/m}{1-4\varepsilon_0} - \frac{L/m(1-4\varepsilon_0)}{1-4\varepsilon_0}\right)\left(4\varepsilon_0\frac{m}{L} + \frac{m}{L} \right)\\
        &\leq 4\varepsilon_0 +  \frac{4\varepsilon_0}{1-4\varepsilon_0}(1+4\varepsilon_0)\\
        &=\frac{8\varepsilon_0}{1-4\varepsilon_0}.
    \end{align*}
    This finishes the proof.
\end{proof}

We can now prove Theorem~\ref{thm: Local Convergence}.

\subsection{Proof of Theorem~\ref{thm: Local Convergence}}
\begin{proof}\label{proof: Local convergence}
    First, it follows that
    \begin{equation*}
        \Vert \X_{l+1} - \X\Vert_\fro \leq \Vert\X_{l+1} -\W_l\Vert_\fro + \Vert \W_l - \X\Vert_\fro\leq 2\Vert \W_l - \X\Vert_\fro,
    \end{equation*}
    as $\X_{l+1}$ is the best rank-$r$ approximation of $\W_l$. Plugging in $\W_l = \X_l + \alpha_l\Ptl\G_l$, we see that
    \begin{align*}
        \Vert \X_{l+1}-\X\Vert_\fro&\leq 2\left\Vert \X_l + \alpha_l\Ptl\G_l - \X\right\Vert_\fro\\
        &=2\Vert \X_l-\X -\alpha_l\Ptl\Ro(\X_l-\X)\Vert_\fro\\
        &\leq \underbrace{2\Vert (\Ptl - \alpha_l\Ptl\Ro\Ptl)(\X_l-\X)\Vert_\fro}_{I_1}\\
        &\quad+ \underbrace{2\Vert(I-\Ptl)(\X_l-\X)\Vert_\fro}_{I_2}\\
        &\quad+ \underbrace{2\vert\alpha_l\vert\Vert\Ptl\Ro(I-\Ptl)(\X_l-\X)\Vert_\fro}_{I_3}.
    \end{align*}
    It remains to bound each term individually. Using Lemma~\ref{lem: Alpha spectral norm bound}, we see that
    \begin{equation*}
        I_1\leq \frac{16\varepsilon_0}{1-4\varepsilon_0}\Vert\X_l-\X\Vert_\fro.
    \end{equation*}
    Next, notice that from Lemma~\ref{lem: Projection Bounds} and the fact that $\Ptl\X_l = \X_l$,
    \begin{align*}
        I_2 &= 2\Vert(I-\Ptl)\X_l-(I-\Ptl)\X)\Vert_\fro\\
        &= 2\Vert(I-\Ptl)\X\Vert_\fro\\
        &\leq\frac{2\Vert \X_l-\X\Vert_\fro^2}{\sigma_\text{min}(\X)}\\
        &\leq\frac{\sqrt{m}\varepsilon_0}{8n^{5/4}\sqrt{\beta\nu r\log{n}}} \Vert\X_l-\X\Vert_\fro\\
        &\leq \varepsilon_0\Vert \X_l-\X\Vert_\fro\\
        &\leq \frac{\varepsilon_0}{1-4\varepsilon_0}\Vert \X_l-\X\Vert_\fro,
    \end{align*}
    using Lemma~\ref{lem: Projection Bounds} and our initial assumption.
    Finally, we see that, following a similar argument as in the bound of $I_2$ and using Lemma~\ref{lem: Ptl R_o bound},
    \begin{align*}
        I_3&\leq 2\vert\alpha_l\vert\Vert \Ptl\Ro\Vert\Vert(I-\Ptl)\X\Vert_\fro\\
        &\leq \frac{2L/m}{1-4\varepsilon_0}\left[\left(\frac{m}{L} + 4\sqrt{\frac{8m\log(n)}{n}}\right)\frac{2\Vert \X_l - \X\Vert_\fro}{\sigma_\mathrm{min}(\X)} + \frac{m}{L}+\frac{4m\sqrt{n}}{L}\sqrt{\frac{\beta \nu r n\log(n)}{3m}} \right]\left(\frac{\Vert\X_l-\X\Vert_\fro}{\sigma_\mathrm{min}(\X)}\right)\Vert\X_l-\X\Vert_\fro\\
        &\leq\frac{1}{1-4\varepsilon_0}\left(\frac{\varepsilon_0^2}{128} + \frac{\varepsilon_0}{16}+\frac{\varepsilon_0^2}{32}+\frac{\varepsilon_0}{\sqrt{48}}\right)\Vert\X_l-\X\Vert_\fro\\
        &\leq \frac{\varepsilon_0}{1-4\varepsilon_0},
    \end{align*}
    where the second to last inequality follows from the same analysis conducted in Lemma~\ref{lem: New Local Pseudo proof}, just divided by 2. Collecting these results, we get
    \begin{equation*}
        \Vert\X_{l+1}-\X\Vert_\fro\leq\frac{18\varepsilon_0}{1-4\varepsilon_0}\Vert\X_l-\X\Vert_\fro.
    \end{equation*}
    By the assumption of the theorem, which holds for $l=0$, and as we have a contractive sequence, it inductively follows that the assumption holds for $l\geq 0$. This concludes the proof.
\end{proof}

\section{Initialization Results}\label{appendix: initialization}
Now that local convergence has been established, we can now prove quantitative guarantees for the initialization methods provided in the main text.

\subsection{Proof of Lemma~\ref{lem:Initialization}}
We first start with a proof of the guarantee provided by one-step hard thresholding, detailed in Lemma~\ref{lem:Initialization}:
\begin{proof}\label{proof: 1SHT}
     First, notice that for $\W_0 = \frac{L}{m}\Ro(\X)$, we get
    \begin{align*}
        \left\Vert \X_0 - \X\right\Vert&\leq \left\Vert \W_0 -\X\right\Vert + \left\Vert \W_0-\X_0\right\Vert\\
        &\leq 2\left\Vert \W_0-\X\right\Vert,
    \end{align*}
    where the first inequality follows from the triangle inequality and the second inequality follows from the fact that $\W_0$ is the best rank-$r$ approximation of $\X_0$ by Eckart-Young-Mirsky\cite{eckart1936approximation}. We now need a bound for this last term. Notice that $\W_0-\X = \frac{L}{m}\sum_{\alphab}\langle \X,\wa\rangle \va - \X$ is a sum of zero-mean i.i.d random matrices, opening up use of Bernstein's inequality. In order to use this, define $\Z_{\alphab} = \frac{L}{m}\langle \X,\wa\rangle\va -\frac{1}{m}\X$. We need a bound on $\left\Vert \Z_{\alphab}\right\Vert$ and $\left\Vert \mathbb{E}[\Z_{\alphab}]^2\right\Vert$.
    First, notice that 
    \begin{align*}
        \left\Vert \Z_{\alphab} \right\Vert &= \left\Vert \frac{L}{m}\langle \X,\wa\rangle\va-\frac{1}{m}\X\right\Vert\\
        &\leq\frac{L}{m}\vert\langle \X,\wa\rangle\vert \left\Vert \va\right\Vert + \frac{1}{m}\left\Vert \X\right\Vert\\
        &\leq \frac{4L}{m}\left\Vert \X\right\Vert_\infty + \frac{n}{m}\left\Vert \X\right\Vert_\infty\\
        &\leq \frac{5L}{m}\left\Vert\X\right\Vert_\infty =:c,
    \end{align*}
    as $\left\Vert \va\right\Vert<1$ from Lemma~\ref{lem: H and H^-1 eigvals}.
    Next, notice that 
    \begin{equation*}
        \left\Vert\mathbb{E}[\Z_{\alphab}^2]\right\Vert = \left\Vert\frac{L}{m^2}\sum_{\alphab} \langle \X,\wa\rangle^2\va^2-\frac{1}{m^2}\X^2\right\Vert.
    \end{equation*}
    As both these matrices are positive semi-definite, it follows that $\left\Vert\mathbb{E}[\Z_{\alphab}^2]\right\Vert\leq\max\{ \left\Vert \frac{L}{m^2}\sum_{\alphab} \langle \X,\wa\rangle^2\va^2\right\Vert,\left\Vert \frac{1}{m^2}\X^2\right\Vert\}$.
    It follows that
    \begin{align*}
        \left\Vert \frac{L}{m^2} \sum_{\alphab} \langle \X,\wa\rangle^2\va^2\right\Vert &= \max_{\y\in\bb{R}^n,\left\Vert \y\right\Vert_2 = 1} \frac{L}{m^2} \sum_{\alphab} \langle \X,\wa\rangle^2 \y^\top\va^2\y\\
        &\leq \frac{16L}{m^2}\left\Vert \X\right\Vert_\infty^2 \y^\top\left(\sum_{\alphab} \va^2\right)\y\\
        &= \frac{16L}{m^2}\left\Vert \X\right\Vert_\infty^2 \lambda_\text{max}\left(\sum_{\alphab}\va^2\right).
    \end{align*}
    Now, from Lemma~\ref{lem:Form of va^2}, $\sum_{\alphab} \va^2 = \frac{n^2 - 2n +2}{4n}\J$. It follows that $\lambda_\text{max}\left(\sum_{\alphab} \va^2 \right) = \frac{n^2-2n+2}{4n}\leq \frac{n}{4}$ as $\J$ is an orthogonal projection. Thus,
    \begin{equation*}
        \left\Vert \mathbb{E}\left(\Z_{\alphab}^2\right)\right\Vert \leq \frac{5nL}{m^2}\left\Vert \X\right\Vert^2_\infty =: V_0.
    \end{equation*}
    Now to determine $t$, we note that
    \begin{align*}
        \frac{V}{c} &= \frac{5nL}{m}\left\Vert \X\right\Vert_\infty^2\frac{m}{5L\left\Vert\X\right\Vert_\infty}\\
        &= n\left\Vert\X\right\Vert_\infty\\
        &\geq \sqrt{\frac{40\beta n^3\log{n}}{3m}}\Vert\X\Vert_\infty,
    \end{align*}
    for $m\geq \frac{40}{3}\beta n\log{n}$. It follows that
    \begin{align*}
        \bb{P}\left(\left\Vert\X_0-\X\right\Vert>\sqrt{\frac{40\beta n^3\log{n}}{3m}}\Vert \X\Vert_\infty\right)&\leq 2n \exp\left(-\beta\log(n)\right)\\
        &= 2n^{1-\beta},
    \end{align*}
    verifying the probabilistic bound. To complete the proof we use Assumption~\ref{asp: mu_1 assumption}, and it follows that
    \begin{equation*}
        \Vert \X_0-\X\Vert_\fro\leq\sqrt{2r}\Vert\X_0-\X\Vert \leq \sqrt{\frac{320 r\beta n^3\log(n)}{3m}}\Vert\X\Vert_\infty \leq \sqrt{\frac{320 r^2 \mu_1^2 n\log(n)}{3m}}\Vert \X\Vert,
    \end{equation*}
    thus concluding the proof.
\end{proof}

Now, we will prove a technical lemma about Algorithm~\ref{alg:Trimming Algorithm}:
\begin{lem}[Trimming Result] \label{lem: Trimming result}
Let $\Z_l = \U_l\D_l\U_l^\top$ be a rank-$r$ matrix such that
\begin{equation*}
    \Vert \Z_l - \X\Vert\leq\frac{\sigma_\mathrm{min}(\X)}{10\sqrt{2}}.
\end{equation*}
Then the matrix $\hat{\Z_l}$ returned by Algorithm~\ref{alg:Trimming Algorithm} satisfies
\begin{equation*}
    \Vert \mathcal{P}_{\hat{\U}_l} \e_i\Vert \leq \frac{10}{9}\sqrt{\frac{\nu r}{128n}},\qquad\qquad \Vert \mathcal{P}_{\hat{\U}_l}\wa\Vert_\fro \leq \frac{10}{9} \sqrt{\frac{\nu r}{8n}}, \qquad \mathrm{and}\qquad \Vert \mathcal{P}_{\hat{\U}_l}\va\Vert_\fro\leq \frac{10}{9} \sqrt{\frac{\nu r}{2n}}
\end{equation*}
and furthermore
\begin{equation*}
    \Vert\hat{\Z}_l-\X\Vert_\fro\leq 8\kappa\Vert\Z_l-\X\Vert_\fro
\end{equation*}
\end{lem}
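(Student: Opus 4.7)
The strategy decomposes into two tracks: controlling the row-wise incoherence of the trimmed subspace $\hat{\U}_l$ and controlling the Frobenius error $\|\hat{\Z}_l - \X\|_\fro$. Both rest on the observation that, under the hypothesized spectral closeness of $\Z_l$ to $\X$, the trimming operator modifies $\U_l$ only slightly because $\U_l$ is already close to the incoherent ground-truth basis $\U$. To make this quantitative I write $\A_l = \bm{C}_l\U_l$ with $\bm{C}_l = \diag(c_1,\dots,c_n)$ and $c_i = \min\{1,\sqrt{\nu r/n}/\|\U_l^{(i)}\|_2\}\in(0,1]$, so that $\A_l = \U_l$ away from the trimmed set $S = \{i : \|\U_l^{(i)}\|_2 > \sqrt{\nu r/n}\}$.

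I would first invoke a Wedin $\sin\Theta$ estimate: since $\|\Z_l - \X\| \leq \sigma_{\min}(\X)/(10\sqrt{2})$ forces a gap of at least $0.9\,\sigma_{\min}(\X)$ between the $r$ nonzero eigenvalues of $\Z_l$ and the zero eigenvalues, there exists an orthogonal $\bm{Q}\in\real^{r\times r}$ with
\[
\|\U_l\bm{Q} - \U\|_\fro \;\lesssim\; \frac{\|\Z_l - \X\|_\fro}{\sigma_{\min}(\X)} .
\]
For every $i\in S$, incoherence of $\U$ gives $\|(\U\bm{Q}^\top)^{(i)}\|_2\le\sqrt{\nu r/(128n)}$, so the reverse triangle inequality produces $\|\U_l^{(i)}-(\U\bm{Q}^\top)^{(i)}\|_2\ge\tfrac12\|\U_l^{(i)}\|_2$, because on $S$ the row norm $\|\U_l^{(i)}\|_2$ already exceeds the incoherence level by a large multiplicative factor. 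Summing the per-row truncation loss,
\[
\|\A_l-\U_l\|_\fro^2 \;=\; \sum_{i\in S}(\|\U_l^{(i)}\|_2 - \sqrt{\nu r/n})^2 \;\le\; \sum_{i\in S}\|\U_l^{(i)}\|_2^2 \;\le\; 4\,\|\U_l-\U\bm{Q}^\top\|_\fro^2 ,
\]
which is small enough in the prescribed neighborhood that $\A_l^\top\A_l = \I + \bm{E}$ with $\|\bm{E}\|\le 1/10$, and consequently $\sigma_{\min}(\A_l)\ge 9/10$.

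From here the incoherence claims are immediate: $\hat{\U}_l = \A_l(\A_l^\top\A_l)^{-1/2}$ spans the range of $\hat{\Z}_l$ with orthonormal columns, so $\|\mathcal{P}_{\hat{\U}_l}\e_i\|_2 = \|\hat{\U}_l^{(i)}\|_2 \le \|\A_l^{(i)}\|_2/\sigma_{\min}(\A_l)\le(10/9)\|\A_l^{(i)}\|_2$, with $\|\A_l^{(i)}\|_2$ capped by construction at the incoherence-scaled trimming level. The $\wa$ and $\va$ bounds then follow by the triangle-inequality and dual-basis argument used in Remark~\ref{rem: incoherence remark}. For the Frobenius estimate, I decompose
\[
\hat{\Z}_l-\Z_l \;=\; (\A_l-\U_l)\D_l\A_l^\top + \U_l\D_l(\A_l-\U_l)^\top ,
\]
yielding $\|\hat{\Z}_l-\Z_l\|_\fro\le 2\|\D_l\|\,\|\A_l-\U_l\|_\fro$, and since $\|\D_l\| = \|\Z_l\|\le\|\X\|+\|\Z_l-\X\|$ is comparable to $\|\X\|$ in the neighborhood, combining with the bound on $\|\A_l-\U_l\|_\fro$ gives $\|\hat{\Z}_l-\Z_l\|_\fro\lesssim\kappa\|\Z_l-\X\|_\fro$; a final application of the triangle inequality yields the $8\kappa$ factor.

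The main obstacle will be propagating sharp numerical constants through the Wedin bound, the trimmed-set estimate, and the subsequent $\sigma_{\min}(\A_l)$ perturbation tightly enough to hit the precise $9/10$ and $8\kappa$ thresholds. The hypothesis $\|\Z_l-\X\|\leq\sigma_{\min}(\X)/(10\sqrt{2})$ is calibrated precisely so that the three ingredients (spectral gap control, trimmed-row domination, and operator-norm perturbation of $\A_l^\top\A_l$) compose with exactly the claimed constants.
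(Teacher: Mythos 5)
Your proposal is correct in outline, but it is worth separating what the paper actually proves from what it cites. The paper's own proof of this lemma consists only of (i) a citation to \cite{wei2020guarantees} for the first conclusion ($\Vert\mathcal{P}_{\hat{\U}_l}\e_i\Vert$) and the fourth ($8\kappa$ Frobenius bound), and (ii) the derivation of the $\wa$ and $\va$ bounds from the first conclusion via the triangle inequality over the four standard-basis terms of $\wa$ and the dual-basis expansion $\va=\sum_{\betab}H^{\alphab\betab}\wb$ with $\sum_{\betab}\vert H^{\alphab\betab}\vert\le 2$ from Lemma~\ref{lem: H and H^-1 eigvals}. Your treatment of (ii) is exactly the paper's. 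For (i) you reconstruct the standard trimming analysis (Wedin/Procrustes alignment, per-row truncation loss on the trimmed set $S$, perturbation of $\A_l^\top\A_l$), which is the right skeleton of the argument the paper outsources; this is a legitimate, more self-contained route.

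Two concrete issues remain in your reconstruction of (i). First, the step ``$\Vert\A_l-\U_l\Vert_\fro^2\le 4\Vert\U_l-\U\bm{Q}^\top\Vert_\fro^2$, hence $\Vert\A_l^\top\A_l-\I\Vert\le 1/10$'' does not follow from the stated hypothesis as written: the hypothesis controls $\Vert\Z_l-\X\Vert$ in \emph{spectral} norm, so $\Vert\U_l-\U\bm{Q}^\top\Vert_\fro$ is only bounded by $\cO(\sqrt{r}\,\Vert\Z_l-\X\Vert/\sigma_{\min}(\X))=\cO(\sqrt{r}/10)$, and your Frobenius control of the truncation loss inherits this $\sqrt{r}$. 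To get an $r$-free bound on $\Vert\I-\A_l^\top\A_l\Vert=\Vert\sum_{i\in S}(1-c_i^2)(\U_l^{(i)})^\top\U_l^{(i)}\Vert$ one must split each row over $S$ into $(\U_l-\U\bm{Q}^\top)^{(i)}$ plus an incoherent remainder, use the spectral norm of $\U_l-\U\bm{Q}^\top$ on the first piece, and bound $\vert S\vert$ to control the second; this is the finer argument in the cited reference, and your sketch elides it. Second, the cap imposed by Algorithm~\ref{alg:Trimming Algorithm} is $\sqrt{\nu r/n}$, so your chain $\Vert\hat{\U}_l^{(i)}\Vert_2\le\Vert\A_l^{(i)}\Vert_2/\sigma_{\min}(\A_l)$ yields $\tfrac{10}{9}\sqrt{\nu r/n}$, not the claimed $\tfrac{10}{9}\sqrt{\nu r/(128n)}$; matching the stated constant requires the trimming threshold to be the $\sqrt{\nu r/(128n)}$ normalization of Assumption~\ref{asp: Incoherence assumption} (a mismatch already present in the paper, but one your argument should flag rather than absorb). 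Your constant accounting for the $8\kappa$ bound is otherwise on the right track.
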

\begin{proof}[Proof of Lemma~\ref{lem: Trimming result}]
    The proof of the first and fourth statements can be found in \cite{wei2020guarantees}. To see the second and third statements, we can apply the same analysis as in Remark~\ref{rem: incoherence remark}. This analysis is reproduced here for convenience. First, notice that if
    $\Vert \mathcal{P}_{\hat{\U}_l}\e_{ij}\Vert_2\leq\frac{10}{9}\sqrt{\frac{\nu r}{128n}}$, then by the triangle inequality
    \[
    \Vert \mathcal{P}_{\hat{\U}_l}\wa\Vert_\fro^2 \leq \frac{40}{9}\Vert\Pu\e_{ij}\Vert_\fro\leq \frac{10}{9}\sqrt{\frac{\nu r}{8n}}.
    \]
    This validates the second statement. To see the third result, notice that
        \begin{align*}
        \Vert\mathcal{P}_{\hat{\U}_l}\va\Vert_\fro &= \left\Vert \mathcal{P}_{\hat{\U}_l}\left(\sum_{\betab\in\mathbb{I}}H^{\alphab\betab}\wb\right)\right\Vert_\fro\\
        &\leq\sum_{\betab\in\mathbb{I}}\left\vert H^{\alphab\betab}\right\vert\Vert\mathcal{P}_{\hat{\U}_l}\wb\Vert_\fro\\
        &\leq \frac{10}{9}\sqrt{\frac{\nu r}{8n}}\sum_{\betab\in\mathbb{I}}\left\vert H^{\alphab \betab}\right\vert,
    \end{align*}
    and from Lemma~\ref{lem: H and H^-1 eigvals}, it follows that
        $\sum_{\ai}\vert H^{\alphab\betab}\vert \leq 2$, so the last statement follows.
\end{proof}

\subsection{Proof of Lemma~\ref{lem:Resampling result}}
\begin{proof}\label{proof: Resampling proof}
    First, assume that at the $l$-th iteration of Algorithm~\ref{alg:Resampling Algorithm},
    \begin{equation*}
        \Vert\Z_l-\X\Vert_\fro\leq\frac{\sigma_\mathrm{max}(\X)}{256\kappa^2}.
    \end{equation*}
    This indicates that $\hat{\Z_l}$ is $\frac{100}{81}$-$\nu$ incoherent with respect to $\{\wa\}_{\ai}$ and that
    \begin{equation*}
        \Vert \hat{\Z}_l - \X\Vert_\fro \leq 8\kappa \Vert\Z_l-\X\Vert_\fro.
    \end{equation*}

    Following a similar strategy as in the proof for Theorem~\ref{thm: Local Convergence}, we can decompose the error at the $l+1$-th iteration as follows:
    \begin{align*}
        \Vert \Z_{l+1} -\X\Vert_\fro &\leq \underbrace{2\left\Vert (\Pthl - \frac{L}{m}\Pthl \Rol \Pthl)(\hat{\Z}_l-\X)\right\Vert_\fro}_{I_4}\\
        &\quad+ \underbrace{2\left\Vert (\mathcal{I}-\Pthl)(\hat{\Z}_l-\X)\right\Vert_\fro}_{I_5}\\
        &\quad+ \underbrace{2\left\Vert \frac{L}{m}\Pthl \Rol(\mathcal{I}-\Pthl)(\hat{\Z}_l-\X)\right\Vert_\fro}_{I_6}.
    \end{align*}
    Now, as $\hat{\Z}_l$ and $\Omega_{l+1}$ are independent, we can use Theorem~\ref{thm: R_omega RIP} to bound $I_4$ as follows:
    \begin{align*}
        I_4&\leq 2\left\Vert \frac{L}{m}\Pthl\Rol\Pthl - \Pthl\right\Vert\Vert \hat{\Z}_l-\X\Vert_\fro\\
        &\leq 2\sqrt{\frac{8(100\nu/81 )^2r^2\beta n\log(n)}{3\hat{m}}}\Vert \hat{\Z}_l-\X\Vert_\fro\\
        &\leq 16\kappa \sqrt{\frac{80000 \nu^2r^2\beta n\log(n)}{19683\hat{m}}}\Vert \Z_l-\X\Vert_\fro\\
        &\leq 32\kappa\sqrt{\frac{ \nu^2r^2\beta n\log(n)}{\hat{m}}}\Vert \Z_l-\X\Vert_\fro\label{eqn: I_4 bound},
    \end{align*}
    with probability at least $1-2n^{1-\beta}$ as long as $\hat{m}\geq 4\nu^2 r^2\beta n\log(n)$.

    Next, we can bound $I_5$ as follows:
    \begin{align*}
     I_5 &\leq \frac{2\Vert\hat{\Z}_l-\X\Vert_\fro^2}{\sigma_\mathrm{min}(\X)}\\
     &\leq \frac{128\kappa^2\Vert \Z_l-\X\Vert_\fro^2}{\sigma_\mathrm{min}(\X)}\\
     &\leq \frac{1}{2}\Vert\Z_l-\X\Vert_\fro,
    \end{align*}
    where the first inequality follows from Lemma~\ref{lem: Projection Bounds}, the second inequality comes from Lemma~\ref{lem: Trimming result}, and the last inequality comes from the starting assumption.

    For the final result, again recall that $\hat{\Z}_l$ and $\Omega_{l+1}$ are independent, and $\hat{\Z}_l$ is $\frac{100}{81}$-$\nu$ incoherent with respect to $\{\wa\}_{\ai}$. 
    From Lemma~\ref{lem: Asymmetric RIP} and the new incoherence parameter, we have that 
    \begin{align*}
        \left\Vert \frac{L}{m}\Pthl \Rol\Pthl (\Pu - \Puhl) - \Pthl(\Pu-\Puhl)\right\Vert&\leq \sqrt{\frac{40000\beta\nu^2 r^2 n\log(n)}{19683\hat{m}}}\\
        &\leq \sqrt{\frac{2\beta\nu^2 r^2 n\log(n)}{\hat{m}}},
    \end{align*}
    with probability at least $1-2n^{1-\beta}$ given $\hat{m}\geq 2\beta\nu^2 r^2 n\log(n)$.
    Now, as
    \begin{align*}
        (\mathcal{I}-\Pthl)(\hat{\Z}_l-\X) &= -(\mathcal{I}-\Pthl)(\X)\\
        &= -\X + \hat{\U}_l\hat{\U}_l^\top \X + \X\hat{\U}_l\hat{\U}_l^\top - \hat{\U}_l\hat{\U}_l^\top\X\hat{\U}_l\hat{\U}_l^\top\\
        &=-\U\U^\top\X + \hat{\U}_l\hat{\U}_l^\top\X + \U\U^\top\X\hat{\U}_l\hat{\U}_l^\top-\hat{\U}_l\hat{\U}_l^\top\X\hat{\U}_l\hat{\U}_l^\top\\
        &= -(\U\U^\top-\hat{\U}_l\hat{\U}_l^\top)\X(\I-\hat{\U}_l\hat{\U}_l^\top)\\
        &=(\U\U^\top-\hat{\U}_l\hat{\U}_l^\top)(\hat{\Z}_l-\X)(\I-\hat{\U}_l\hat{\U}_l^\top)\\
        &=(\Pu-\Puhl)(\hat{\Z}_l-\X)(\mathcal{I}-\Puhl),
    \end{align*}
    where the first line follows from the fact that $\Pthl \hat{\Z}_l = \hat{\Z}_l$, the second line follows from the definition of $\Pthl$, the third line follows from the fact that $\U\U^\top \X = \X$, the fourth line is a rearrangement of terms, and the fifth line follows from the fact that $\hat{\Z}_l(\I-\hat{\U}_l\hat{\U}_l^\top) = \bm{0}$. It follows that
    \begin{align*}
        I_6 &= 2\left\Vert\frac{L}{m}\Pthl \Rol(\Pu - \Puhl)(\hat{\Z}_l-\X)(\mathcal{I}-\Puhl)\right\Vert_\fro\\
        &=2 \left\Vert\frac{L}{m}\Pthl\Rol(\Pu-\Puhl)(\hat{\Z}_l-\X)(\mathcal{I}-\Puhl)-\underbrace{\Pthl(\Pu-\Puhl)(\hat{\Z}_l-\X)(\mathcal{I}-\Puhl)}_{=\bm{0}}\right\Vert_\fro\\
        & \leq 2\left\Vert\frac{L}{m}\Pthl\Rol(\Pu-\Puhl)-\Pthl(\Pu-\Puhl)\right\Vert\left\Vert(\hat{\Z}_l-\X)(\mathcal{I}-\Puhl)\right\Vert_\fro\\
        &\leq 2\left\Vert\frac{L}{m}\Pthl\Rol(\Pu-\Puhl)-\Pthl(\Pu-\Puhl)\right\Vert\left\Vert\hat{\Z}_l-\X\right\Vert_\fro\\
        &\leq 16\kappa\sqrt{\frac{2\beta\nu^2 r^2 n\log(n)}{\hat{m}}},
    \end{align*}
    where the first two lines follow from the computation above. Combining $I_4, ~I_5$, and $I_6$ gives 
    \begin{equation} \label{eqn: Resampling error}
        \Vert \Z_{l+1}-\X\Vert_\fro\leq \left(\frac{1}{2}+64\kappa\sqrt{\frac{\beta\nu^2 r^2 n\log(n)}{\hat{m}}}\right)\Vert\Z_l-\X\Vert_\fro,
    \end{equation}
    with probability at least $1-4n^{1-\beta}$. It follows that \eqref{eqn: Resampling error} is less than $\frac{5}{6}$ for $\hat{m}\geq (192\nu r\kappa)^2\beta n\log(n)$.

    Now, as $\Z_0 = \mathcal{H}_r\left(\frac{L}{\hat{m}}\mathcal{R}_{\Omega_0}(\X) \right)$, we can make $\Vert \Z_0-\X\Vert_\fro\leq \frac{\sigma_\mathrm{min}(\X)}{256\kappa^2}$ using the one step hard thresholding result from Lemma~\ref{lem:Initialization} for $\hat{m}\geq (2\times10^5)\kappa^6 r^2\mu_1^2 n\log(n)$, and the result follows from here. No attempts were made to optimize the constants.
\end{proof}

\section{Miscellaneous Results}\label{appendix: misc}

\begin{lem}[Bounds for Projections]\label{lem: Projection Bounds}
    Let $\X_l = \U_l \D_l \U_l^\top$ be a rank-$r$ matrix and $\T_l$ be the tangent space of the rank-$r$ matrix manifold at $\X_l$. Let $\X = \U\D\U^\top$ be another rank-$r$ matrix, and $\T$ be the corresponding tangent space. Then 
    \begin{gather*}
        \Vert \U_l\U_l^\top - \U\U^\top\Vert\leq\frac{\Vert \X_l-\X\Vert_\fro}{\sigma_\mathrm{min}(\X)},\qquad \qquad
        \Vert \U_l\U_l^\top - \U\U^\top\Vert_\fro \leq \frac{\sqrt{2}\Vert\X_l-\X\Vert_\fro}{\sigma_\mathrm{min}(\X)}\\
        \Vert (\mathcal{I} - \Ptl)\X\Vert_\fro \leq \frac{\Vert \X_l-\X\Vert_\fro^2}{\sigma_\mathrm{min}(\X)},\qquad \qquad
        \Vert\Ptl-\Pt\Vert\leq\frac{2\Vert \X_l-\X\Vert_\fro}{\sigma_\mathrm{min}(\X)}.
    \end{gather*}
\end{lem}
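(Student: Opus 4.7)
My plan is to prove the four estimates in turn, treating (1) and (2) via a Wedin-style perturbation argument using the Moore--Penrose pseudoinverse, (3) as the principal technical obstacle via an $\X^\dagger$ insertion that converts a linear bound into a quadratic one, and (4) by a short telescoping identity that reduces to (1). For (1) and (2), because $\X$ has exact rank $r$ I have $\U\U^\top = \X\X^\dagger$ with $\|\X^\dagger\| = 1/\sigma_{\min}(\X)$, and since $(\I - \U_l\U_l^\top)\X_l = 0$,
\begin{equation*}
(\I - \U_l\U_l^\top)\U\U^\top = (\I - \U_l\U_l^\top)\X\X^\dagger = (\I - \U_l\U_l^\top)(\X - \X_l)\X^\dagger.
\end{equation*}
Taking the spectral norm and using $\|\U_l\U_l^\top - \U\U^\top\| = \|(\I - \U_l\U_l^\top)\U\U^\top\|$ (both equal the largest principal sine) delivers (1); taking the Frobenius norm and using the orthogonal decomposition $\|\U_l\U_l^\top - \U\U^\top\|_\fro^2 = 2\|(\I - \U_l\U_l^\top)\U\U^\top\|_\fro^2$ (whose cross term vanishes because $(\I - \U_l\U_l^\top)\U_l\U_l^\top = 0$) supplies the $\sqrt{2}$ in (2).

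The main obstacle is (3). From the tangent-space projection formula one has $(\mathcal{I} - \Ptl)\X = (\I - \U_l\U_l^\top)\,\X\,(\I - \U_l\U_l^\top)$, and a naive bound on the right side only produces the linear estimate $\|\X - \X_l\|_\fro$, since each outer factor annihilates $\X_l$ and therefore exposes only one power of the perturbation. To extract a second power and simultaneously introduce $1/\sigma_{\min}(\X)$, the plan is to insert the factorization $\X = \X\X^\dagger\X$ in the middle and apply the identities $(\I - \U_l\U_l^\top)\X_l = 0 = \X_l(\I - \U_l\U_l^\top)$ on each side:
\begin{equation*}
(\I - \U_l\U_l^\top)\X(\I - \U_l\U_l^\top) = (\I - \U_l\U_l^\top)(\X - \X_l)\,\X^\dagger\,(\X - \X_l)(\I - \U_l\U_l^\top).
\end{equation*}
Applying the three-factor inequality $\|ABC\|_\fro \le \|A\|\,\|B\|\,\|C\|_\fro$ with $A = (\X - \X_l)$, $B = \X^\dagger$, $C = (\X - \X_l)$, together with $\|\X - \X_l\| \le \|\X - \X_l\|_\fro$ and $\|\X^\dagger\| = 1/\sigma_{\min}(\X)$, then yields the stated quadratic bound. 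The bookkeeping check that no cross terms survive the $\X^\dagger$ insertion is the one place requiring care.

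For (4), the plan is to telescope the bilinear piece,
\begin{equation*}
\U_l\U_l^\top \Y \U_l\U_l^\top - \U\U^\top \Y \U\U^\top = (\U_l\U_l^\top - \U\U^\top)\Y\U\U^\top + \U_l\U_l^\top\Y(\U_l\U_l^\top - \U\U^\top),
\end{equation*}
and combine with the two linear terms in $\Ptl - \Pt$ to obtain
\begin{equation*}
(\Ptl - \Pt)\Y = (\U_l\U_l^\top - \U\U^\top)\Y(\I - \U\U^\top) + (\I - \U_l\U_l^\top)\Y(\U_l\U_l^\top - \U\U^\top).
\end{equation*}
Each summand has operator norm at most $\|\U_l\U_l^\top - \U\U^\top\|$, so the factor of $2$ together with (1) delivers the stated bound.
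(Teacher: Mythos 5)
Your proposal is correct. Note that the paper itself gives no argument for this lemma --- its ``proof'' is just a pointer to \cite{Weirecovery2016,wei2020guarantees} --- so what you have written is a self-contained derivation of a result the authors import. Your mechanism is essentially the one used in those references, packaged cleanly through the pseudoinverse: writing $\U\U^\top=\X\X^\dagger$ and using $(\I-\U_l\U_l^\top)\X_l=\bm{0}$ to expose one factor of $\X-\X_l$ per side plays the same role as factoring $\X=\U\D\U^\top$ and peeling off $\|(\I-\U_l\U_l^\top)\U\|$ in Wei et al.; the $\X^\dagger$ insertion in the middle of $(\I-\U_l\U_l^\top)\X(\I-\U_l\U_l^\top)$ is exactly what converts the linear bound into the quadratic one for the third estimate, and your telescoping identity for $\Ptl-\Pt$ is the standard one. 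All four computations check out: $(\mathcal{I}-\Ptl)\X=(\I-\U_l\U_l^\top)\X(\I-\U_l\U_l^\top)$ follows from the projection formula, the substitutions $(\I-\U_l\U_l^\top)\X=(\I-\U_l\U_l^\top)(\X-\X_l)$ and $\X(\I-\U_l\U_l^\top)=(\X-\X_l)(\I-\U_l\U_l^\top)$ are legitimate, and the norm inequalities you invoke are valid. Two places are terser than they should be in a final write-up. First, $\|\U_l\U_l^\top-\U\U^\top\|=\|(\I-\U_l\U_l^\top)\U\U^\top\|$ is true only because the two ranges have equal dimension $r$ (it is the symmetry of the largest principal angle); state or cite that. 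Second, for the Frobenius identity the vanishing cross term only gives the orthogonal splitting $\|P-Q\|_\fro^2=\|P(\I-Q)\|_\fro^2+\|(\I-P)Q\|_\fro^2$ with $P=\U_l\U_l^\top$, $Q=\U\U^\top$; to get the factor $2$ you also need $\|P(\I-Q)\|_\fro=\|(\I-P)Q\|_\fro$, which follows from $\Tr(P(\I-Q)P)=r-\Tr(PQ)=\Tr(Q(\I-P)Q)$, again using equal ranks. With those two sentences added, the proof is complete.
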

\begin{proof}[Proof of Lemma~\ref{lem: Projection Bounds}]
    See \cite{Weirecovery2016,wei2020guarantees}.
\end{proof}

\end{document}